\documentclass[12pt]{article}
\usepackage{amsmath,amssymb,amsfonts,amsthm}
\usepackage[all]{xy}

\setlength{\parindent}{18pt}
\textwidth14cm
\textheight21.6cm
\newcounter{item}[section]
\newcounter{kirshr}
\newcounter{kirsha}
\newcounter{kirshb}
\newenvironment{enumroman}{\setcounter{kirshr}{1}
\begin{list}{(\roman{kirshr})}{\usecounter{kirshr}} }{\end{list}}
\newenvironment{enumarab}{\setcounter{kirshb}{1}
\begin{list}{(\arabic{kirshb})}{\usecounter{kirshb}} }{\end{list}}
\newtheorem{theorem}{Theorem}[section]

\newtheorem{lemma}[theorem]{Lemma}
\newtheorem{corollary}[theorem]{Corollary}
\newenvironment{demo}[1]{\noindent{\bf #1.}\upshape\mdseries}
{\nopagebreak{\hfill\rule{2mm}{2mm}\nopagebreak}\par\normalfont}
\theoremstyle{definition}

\newtheorem{example}[theorem]{Example}
\newtheorem{definition}[theorem]{Definition}

\def\C{{\mathfrak{C}}}
\def\Fm{{\mathfrak{Fm}}}

\def\At{{\sf At}}

\def\Nr{{\mathfrak{Nr}}}
\def\Fr{{\mathfrak{Fr}}}
\def\Sg{{\mathfrak{Sg}}}
\def\Fm{{\mathfrak{Fm}}}
\def\A{{\mathfrak{A}}}
\def\B{{\mathfrak{B}}}
\def\C{{\mathfrak{C}}}
\def\D{{\mathfrak{D}}}
\def\M{{\mathfrak{M}}}

\def\CA{{\bf CA}}

\def\Dc{{\bf Dc}}

\def\K{{\bf K}}
\def\K{{\bf K}}

\def\Rd{{\mathfrak{Rd}}}
\def\(R)RA{{\bf (R)RA}}

\def\Dc{{\bf Dc}}
\def\R{\mathbb{R}}
\def\Dc{{\bf Dc}}

\def\Sc{{\bf Sc}}

 \def\CA{{\sf CA}}
\def\B{{\sf B}}
\def\G{{\sf G}}
\def\w{{\sf w}}
\def\y{{\sf y}}
\def\g{{\sf g}}

\def\r{{\sf r}}
\def\K{{\sf K}}
 \def\Cm{{\mathfrak{Cm}}}
\def\Nr{{\mathfrak{Nr}}}
\def\SNr{{\bf S}{\mathfrak{Nr}}}

\def\cyl#1{{\sf c}_{#1}}

\def\sub#1#2{{\sf s}^{#1}_{#2}}

\def\Ra{{\mathfrak{Ra}}}
\def\Ca{{\mathfrak{Ca}}}
\def\set#1{\{#1\} }
\def\Ra{{\mathfrak{Ra}}}
\def\Nr{{\mathfrak{Nr}}}
\def\Tm{{\mathfrak{Tm}}}
\def\A{{\mathfrak{A}}}
\def\B{{\mathfrak{B}}}
\def\C{{\mathfrak{C}}}
\def\D{{\mathfrak{D}}}

\def\CA{{\bf CA}}

\def\G{{\bf G}}

\def\L{{\mathfrak{L}}}
\def\R{\cal{R}}

\def\ws{winning strategy}

\def \set#1{\{#1\} }

\def\Nr{{\mathfrak{Nr}}}
\def\Fr{{\mathfrak{Fr}}}
\def\Sg{{\mathfrak{Sg}}}
\def\Fm{{\mathfrak{Fm}}}
\def\Rd{{\mathfrak{Rd}}}
\def\Ig{{\mathfrak{Ig}}}
\def\CA{{\bf CA}}

\def\K{{\bf K}}
\def\L{{\bf L}}

\def\(R)RA{{\bf (R)RA}}

\def\Dc{{\bf Dc}}
\def\R{\mathbb{R}}
\def\Dc{{\bf Dc}}

\def\Nr{{\mathfrak{Nr}}}
\def\Fr{{\mathfrak{Fr}}}
\def\Sg{{\mathfrak{Sg}}}
\def\Fm{{\mathfrak{Fm}}}
\def\Rd{{\mathfrak{Rd}}}
\def\Ig{{\mathfrak{Ig}}}
\def\CA{{\bf CA}}

\def\K{{\bf K}}
\def\L{{\bf L}}

\def\(R)RA{{\bf (R)RA}}

\def\Dc{{\bf Dc}}
\def\R{\mathbb{R}}
\def\Dc{{\bf Dc}}

\def\R{\mathbb{R}}

\def\R{\mathbb{R}}
\def\Dc{{\bf Dc}}

\def\Sc{{\bf Sc}}

 \def\CA{{\sf CA}}

\def\M{{\mathfrak{M}}}

\def\At{{\mathfrak{At}}}

\def\G{{\mathfrak{G}}}

\def\K{{\bf K}}

\def\tp{{\sf tp}}

\def\sub#1#2{{\sf s}^{#1}_{#2}}
\def\cyl#1{{\sf c}_{#1}}

\def\pa{$\forall$}
\def\pe{$\exists$}

\def\ef{Ehren\-feucht--Fra\"\i ss\'e}

\def\nodes{{\sf nodes}}

\def\A{{\mathfrak{A}}}
\def\B{{\mathfrak{B}}}
\def\C{{\mathfrak{C}}}
\def\D{{\mathfrak{D}}}

\def\Fm{{\mathfrak{Fm}}}
\def\Ra{{\mathfrak{Ra}}}
\def\Nr{{\mathfrak{Nr}}}
\def\F{{\mathfrak{F}}}
\def\CA{{\bf CA}}

\def\set#1{ \{#1\}}

\def\Ca{{\mathfrak Ca}}

\def\pe{$\exists$}
\def\pa{$\forall$}
\def\Cm{{\mathfrak Cm}}
\def\Sg{{\mathfrak Sg}}

\def\ls { L\"owenheim--Skolem}
\def\At{{\sf At}}
\def\Id{{\sf Id}}

\def\rng{{\sf rng}}
\def\dom{{\sf dom}}
\def\Fl{{\mathfrak{Fl}}}

\def\w{{\sf w}}
\def\g{{\sf g}}
\def\y{{\sf y}}
\def\r{{\sf r}}
\def\Co{{\sf Co}}
\def\cyl#1{{\sf c}_{#1}}
\def\sub#1#2{{\sf s}^{#1}_{#2}}

\def\d{Dedekind-MacNeille}

\def\swap#1#2{{\sf s}_{[#1, #2]}}

\def\ws{winning strategy}
\def\ef{Ehren\-feucht--Fra\"\i ss\'e}

\def\Rl{\mathfrak{Rl}}

\def\y{{\sf y}}
\def\g{{\sf g}}
\def\r{{\sf r}}
\def\w{{\sf w}}
\def\MA{{\sf MA}}

\def\Dc{{\sf Dc}}
\def\Sc{{\sf Sc}}
\def\Ss{{\sf Ss}}

\def\PMA{{\sf PMA}}


\def\TCA{{\sf TCA}}
\def\CA{{\sf CA}}
\def\TDc{{\sf TDc}}
\def\R{{\sf R}}
\def\TPCA{{\sf TPCA}}
\def\L{{\mathfrak{L}}}
\def\ls { L\"owenheim--Skolem}

\title{Algebraisable versions of topological predicate logic, Part 1\\
{\it Toplogical logic via cylindric algebras}}
\author{Tarek Sayed Ahmed}

\begin{document}
\maketitle
\begin{abstract}
\noindent  Motivated by questions like:  which spatial structures may be characterized by means of modal logic, 
what is the logic of space, how to encode
in modal logic different geometric relations, topological logic provides a framework for studying the confluence of the topological semantics for 
$\sf S4$ modalities, based on topological spaces 
rather than Kripke frames, with the $\sf S4$ 
modality induced by the interior operator. 

Following research initiated by Sgro, and further pursued algebraically by Georgescu, 
we prove an interpolation theorem and an omitting types theorem 
for various extensions of predicate topological logic and Chang's modal logic. Our proof is algebraic addressing
expansions of cylindric algebras using interior operators and  boxes, respectively. 
Then we proceed like is done in abstract algebraic
logic by studing algebraisable extensions of both logics; obtaining a plethora of results
on the amalgamation property for various subclasses of their algebraic counterparts, which are varieties.
As a sample, we show that the free algebras of infinite dimensions 
enjoy several weak forms of interpolation, a property equivalent to the fact
that the class of simple algebras have the amalgamation property, 
but they fail the usual Craig interpolation  property, because the whole variety fails to have
the amalgamation property. Such interpolation properties fail for finite dimensions $>1$.

Notions like atom-canonicity and complete representations are approached for finite dimensional topological 
cylindric algebras. The logical consequences of our algebraic results are carefully worked out for infinitary extensions of Chang's 
predicate modal logic and finite versions thereof, by restricting to $n$ variables, $n$ finite, 
viewed as a propositional multi-dimensional modal logic, and $n$ products of bimodal whose
frames are of  the form $(U, U\times U, R)$ where $R$ is a pre-order, endowed with diagonal 
constants.  We show that for any finite $n>2$ such modal logics, though canonical hence Kripke complete,  
are necessarily non-finitely axiomatizable,
furthermore, any axiomatization must contain infinitely many propositional variables, 
infinitely many diagonal constants, and infinitely many non-canonical sentences; hence they are only barely canonical. 
In particular, they are 
not Sahlqvist axiomatizable; and even more they cannot be axiomatized by modal formulas with first order corespondances on 
their Kripke frames. For $n\leq 2$, such logics are are finitely axiomatizable by Sahlqvist modal  formulas, 
they are decidable for $n=1$ and undecidable for $n=2$. We shall also deal with guarded versions of such topological 
$n$ modal logics (by relativizing states to guards) 
proving that they have the finite model property, are decidable, and 
finitely axiomatizable, for any finite $n$.
 
The paper has four parts, this is the first.
\footnote{Topological logic, Chang modal logic, cylindric algebras, representation theory, amalgamation, congruence extension, interpolation
Mathematics subject classification: 03B50, 03B52, 03G15.}
\end{abstract}

\section{Introduction}

\subsection{Universal logic}

Universal logic is the field of logic that is concerned with giving an account of what features are common to all logical structures. 
If slogans are to be taken seriously, then universal logic is to logic what universal algebra is to algebra.
The term ``universal logic' was introduced in the 1990s 
by Swiss Logician Jean Yves Beziau but the field has arguably existed for many decades. 
Some of the works of Alfred Tarski in the early twentieth century, on metamathematics and in algebraic logic, 
for example, can be regarded undoubtedly, in retrospect, as fundamental contributions to universal logic. 
Indeed, there is a  whole well established 
branch of algebraic logic,
that attempts to deal with the universal notion of a logic. Pioneers in this branch include Andr\'eka
and    N\'emeti \cite{AN75} and Blok and Pigozzi \cite{BP}. 
The approach of Andr\'eka and N\'emeti though is more general, since, unlike the approach in \cite{BP} which 
is purely syntactical,  it allows semantical notions stimulated via 
so-called `meaning functions' \cite{ANS}, to be defined below. Another universal approach to many cylindric-like algebras was implemented in 
\cite{univl} in the context of the very general notion of what is known
in the literature as systems of varieties definable by a Monk's schema \cite{ST, HMT2}.

One aim of universal logic is to determine the domain of validity of such and such metatheorem 
(e.g. the completeness theorem, the Craig interpolation 
theorem, or the Orey-Henkin omitting types theorem of first order logic) and to 
give general formulations of metatheorems in broader, or even entirely other contexts.
This is also done in algebraic logic, by dealing with modifications and variants of first order logic resulting in a natural way
during the process of {\it algebraisation}, witness for example the omitting types theorem proved in  \cite{Sayed}. 

This kind of investigation  is extremely potent  for applications and helps to make the distinction 
between what is really essential to a particular logic and what is not. 

During the 20th 
century, numerous logics have been created, to mention only a few: intuitionistic logic, modal logic, topological logic, spatial logic, 
dynamic logic,
many-valued logic, fuzzy logic, relevant logic, 
para-consistent logic, non monotonic logic, 
etc. Universal logic is not a new addition (not a new logic), it is rather a way of unifying this multiplicity of logics 
by developing general means and concepts that can encompass all hitherto existing logics allowing
a uniform treatment of their meta theories, so in this respect it resembles category theory 
whose main concern is to highlight adjoint situations in various branches
of mathematics.

Universal logic also helps to clarify {\it basic concepts} explaining what is an extension and what is a deviation of a given logic, 
what does it mean for a logic to be equivalent, stronger, or interpretable into another one. 
It allows to give precise definitions of notions often discussed by philosophers 
like {\it truth-functionality, extensionality, logical form}, etc.
But such issues are at the heart of research in algebraic logic as well.

\subsection{Algebraic logic}

Traditionally, algebraic logic has focused on
the algebraic investigation of particular classes of algebras, the most famous
are Tarski's cylindric algebras and Halmos' polyadic algebras, whether or not they could be connected to some known
assertional system by means of the Lindenbaum-Tarski method of forming algebras of formulas. Viewing the set of formulas as an algebra with
operations induced by the logical connectives, logical equivalence
is a congruence relation on the formula algebra.

However, when such a connection could be established, there was
interest in investigating the relationship between various
metalogical properties of the logistic system and the algebraic
properties of the associated class of algebras (obtaining what are
sometimes called``bridge theorems"); so in a way algebraic logic can be viewed as the natural interface between logic 
(in a broad sense) and universal algebra. 

For example, it was discovered
at quite an early stage of the development of the subject that there is a natural relation between the interpolation theorems
of intuitionistic, intermediate propositional calculi,
and the amalgamation properties of varieties of Heyting algebras, due to several authors, including Tarski, Jonsson, 
Rasiowa, Sikorski and others. 
Similar connections were investigated between interpolation theorems
in the classical predicate calculi and congruence extension properties and amalgamation results in varieties of
cylindric and polyadic algebras; pioneers in this connection include Comer, Johnson, 
Diagneault and Pigozzi \cite{AUamal, P, MS, MStwo, AUU}.

Interpolation theorems require the presence of at least a partial order, but the 
congruence extension property
for an algebra, and for that matter the amalgamation property for a class of algebras are more universal notions, and lend themselves to
wider contexts.

Qouting Pigozzi  from \cite{P} `It is always exciting for a mathematician when close connections
are discovered between seemingly distant notions 
and results from two different branches of mathematics, 
and this is especially true when the 
notions and results involved are important ones and the 
focus of considerable research in their respective areas.
Thus for instance, some recent developments have brought to light
close and unexpected connections between two groups of results - 
metalogical interpolation theorems
of which 
the first and best known is Craig's interpolation theorem for first order logic 
and the algebraic theorems to the effect that certain classes of algebras have the 
amalgamation property."

The framework of the work of Pigozzi in \cite{P} was cylindric algebras an equational formalism of first order logic.
On the other hand, 
Georgescu has shown that the strongly related representation theory of Halmos' polyadic algebras 
can be applied to prove a completeness theorem  
for  many predicate logics, like tense logic, $\sf S4$ modal logic, intuitionistic logic, Chang $\sf S5$ modal logic and 
topological logic \cite{g, g2, g3, g4, g5}. 

In this paper, among many other things, we carry out an analogous investigation but for {\it algebraisable extensions and 
/or versions and modifications} of 
predicate topological logic and Chang modal logic, using
the well developed machinery of the theory of cylindric algebras, and `bridge theorems' in abstract algebraic 
logic.

\subsection{Topological logic and Chang's modal logic}

Topological logic was introduced by Makowsky and Ziegler \cite{z, z2}, and Sgro \cite{s}.
Such logics have a classical semantics with a topological flavour, addessing spatial logics
and their study was approached  using algebraic logic by Georgescu \cite{g}, the task that we further pursue in this 
paper. Topological logics are apt for dealing with {\it logic} and {\it space}; 
the overall point is to take a common mathematical model of space (like a topological space)
and then to fashon logical tools to work with it.

One of the things which blatantly strikes one when studing elementary topology is that notions like open, closed, dense 
are intuitively very transparent, and their
basic properties are absolutely straightforward to prove. However, topology
uses second order notions as it reasons with sets and subsets of `points'. This might suggest that like
second order logic, topology ought to be computationally very complex.
This apparent dichotomy between the two paradigms
vanishes when one realizes that a large portion of
topology can be formulated as a  simple modal logic, 
namely, $\sf S4$! 
This is for sure an asset 
for modal logics tend to be much easier to handle than first order logic
let alone second order.

The project of relating topology to modal logic begins
with work of Alfred Tarski and J.C.C McKinsey \cite{tm}. Strictly speaking Tarski and McKinsey did not work with
modal logic, but rather with its algebraic counterpart, namely, Boolean algebras 
with operators which is the approach we adopt here; the operators they studied where the closure operator induced on what they called
{\it the algebra of topology}, certainly a very ambitious title, giving the impression
that the paper aspired to completely {\it algebraise topology}. 

In retrospect McKinsey and Tarski showed, that the Stone representation theorem for Boolean algebras extend to algebras 
with operators to give topological semantics for classical propositional modal logic, 
in which the `neccessity' operation is modelled by taking the interior (dual operation) 
of an arbitrary subset of
topological space. Although the topological completeness of $\sf S4$ has been well known for quite 
a long time, it was until recently considered as some
exotic curioisity, but certainly having mathematical value. 
It was in the 1990-ies that the work of McKinsey and Tarski, came to the front scene of modal logic (particularly spatial modal logic),
drawing serious attention of many researchers and inspiring
a lot of work stimulated basically by questions concerning the `modal logic of space'; 
how to encode in modal logic different geometric relations? 
A point of contact here between topological spaces, geometry, and cylindric algebra theory
is the notion of dimension. 

From the modern point of view one introduces a basic {\it modal language} with a set $\At$ of atomic propositions, 
the logical Boolean connectives $\land$, $\neg$ and a modality $I$ 
to be interpreted as the 
{\it interior operation.}
Let $X$ be a topological space. The modal language $\L_0$ is interpreted on such a space $X$ together with an {\it interpretation map}
$i:\At \to \wp(X)$. For atomic $p\in \At$, $i(p)$ says which points satisfy $p$. We do not require that $i(p)$ is open.
$(X, i)$ is said to be a {\it topological model}. Then $i$ extends to all $\L_0$ formulas by interpreting negation as 
complement relative
to $X$, conjunction as intersection and $I$ as the interior operator.
In symbols we have:
\begin{align*}
i(\neg \phi)&=X\sim i(\phi),\\
i(\phi\land \psi)&=i(\phi)\cap i(\psi),\\
i(I(\phi))&={\sf int}i(\phi).
\end{align*}
The main idea here is that the basic properties of the
Boolean operations
on sets as well as the salient
topological operations like interior and its dual the closure, correspond to to schemes of sentences.
For example, the fact that the interior operator is idempotent
is expressed by $$i((II\phi)\leftrightarrow (I\phi))=X.$$
The natural question to ask about this language and its semantics is:
Can we characterize in an enlightning way the sentences $\phi$ with the property that for 
all topological models $(X, i)$, $i(\phi)=X$; these are
the topologically valid sentences. 
They are true at all points in all spaces under whatever  interpretation.
More succintly, do we have a nice {\it completeness} theorem?

Tarski and McKinsey proved that the topologically valid sentences are exactly those provable in the modal logic 
$\sf S4$. 
$\sf S4$ has a seemingly different semantics using standard {\it Kripke frames.}
Now $X$ is viewed as the set of {\it possible worlds}. In $\sf S4$, $I$ is read as {\it all points which the current point relates to}.
To get a sound interpretation 
of $\sf S4$ we should require that the current
point is {\it related to itself}. Therefore  we are led to the notion of a pre-ordered model. A 
{\it pre-ordered model}  is defined to be a triple $(X, \leq, i)$ where $(X, \leq)$ is a pre-order and $i: \At\to \wp(X)$
where
$$i(I(\phi))=\{x: \{y:x\leq y\}\subseteq i(\phi)\}.$$
Temporally world $x'\in X$ is a {\it successor} of world $x\in X$ if $x\leq x'$,  $x$ and $x'$ are equivalent worlds if further 
$x\leq x'$ and $x'\leq x$.
We have a completely analogous result here; $\phi$ is valid in pre-ordered models if 
$\phi$ is provable in $\sf S4$.

One can prove the equivalence of the two systems using only topologies on finite sets.
Let $(X, \leq)$ be a pre-order. Consider the {\it Alexandrov topology} on $X$, the open sets are the sets closed upwards in the order.
This gives a topology, call it $O_{\leq}.$ A correspondence 
between topological models and pre-ordered models can thereby be obtained,  
and as it happens we have
for any pre-ordered model $(X, \leq, i)$, all $x\in X$, and all $\phi\in \L_0$
$$x\models \phi \text { in } (X,\leq, i) \Longleftrightarrow x\models \phi \text { in } (X, O_{\leq}, i).$$

Using this result together with the fact that sentences satisfiable in $\sf S4$ have finite topological models, thus 
they are Alexandov topologies, one can show that the semantics 
of both systems each is interpretable in the other; they are equivalent.
We can summarize the above discussion in the following neat theorem, that we can and will 
attribute to McKinsey, Tarski
and  Kripke; this historically is not very accurate. For a topological space $X$ and $\phi$ an $\sf S4$ 
formula we write $X\models \phi$, if $\phi$ is valid topologically in 
$X$ (in either of the senses above). For example, $w\models \Box \phi$ iff 
for all  $w'$ if $w\leq w'$, then $w'\models \phi$, where $\leq$ is the relation $x\leq y$ 
iff $y\in {\sf cl}\{x\}$.

\begin{theorem} (McKinsey-Tarski-Kripke) 
Suppose that $X$ is a dense in itself metric space (every point is a limit point) 
and $\phi$ is a modal $\sf S4$ formula. Then the following are equivalent
\begin{enumarab}
\item $\phi\in \sf S4.$
\item$\models \phi.$
\item $X\models \phi.$
\item $\mathbb{R}\models \phi.$
\item $Y\models \phi$ for every finite topological space $Y.$
\item $Y\models \phi$ for every Alexandrov space $Y.$
\end{enumarab}
\end{theorem}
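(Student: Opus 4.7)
The plan is to establish a circle of implications, with soundness going in one direction and the completeness results going in the other. Soundness $(1)\Rightarrow(2)$ is routine: one verifies that the standard $\sf S4$ axioms ($I(\phi\to\psi)\to(I\phi\to I\psi)$, $I\phi\to\phi$, $I\phi\to II\phi$, together with the Boolean axioms) are valid in every topological model $(X,i)$, since the interior operator ${\sf int}$ is deflationary, idempotent, multiplicative, and preserves $X$; necessitation and modus ponens likewise preserve validity. The implications $(2)\Rightarrow(k)$ for $k\in\{3,4,5,6\}$ are then immediate, as each of these restricts validity to a sub-class of topological spaces.

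For the reverse directions, the key technical link is the Alexandrov correspondence recalled just before the statement: for any preorder $(X,\leq)$ the opens of the Alexandrov topology $O_\leq$ are the $\leq$-upsets, and for every interpretation $i$, every $x\in X$, and every $\phi\in\L_0$,
$$x\models\phi\text{ in }(X,\leq,i)\iff x\models\phi\text{ in }(X,O_\leq,i).$$
Hence topological validity on Alexandrov spaces coincides with Kripke validity on preorders. Combined with the classical Kripke completeness of $\sf S4$ with respect to preorders, this yields $(6)\Rightarrow(1)$. Next, every finite topological space is automatically Alexandrov (a finite intersection of opens is open), and finite Alexandrov spaces correspond exactly to finite preorders via $(X,\leq)\mapsto(X,O_\leq)$; thus the finite model property of $\sf S4$ on preorders gives $(5)\Rightarrow(1)$.

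The main obstacle is the implication $(3)\Rightarrow(1)$, which is the substance of the McKinsey--Tarski topological completeness theorem, with $(4)\Rightarrow(1)$ as its specialization to $X=\mathbb{R}$. I would argue contrapositively: given $\phi\notin\sf S4$, invoke the already-proved $(5)\Rightarrow(1)$ to obtain a finite rooted preorder $(F,\leq)$ with root $r$ and an interpretation $j:\At\to\wp(F)$ with $r\not\models\phi$. The task then reduces to producing an \emph{interior map} $f:X\to F$, that is, a continuous open surjection where $F$ carries the Alexandrov topology $O_\leq$; for the pulled-back interpretation $i(p):=f^{-1}(j(p))$ will then satisfy $i(\psi)=f^{-1}(j(\psi))$ for every $\psi\in\L_0$ by an easy induction, using that $f^{-1}$ commutes with Boolean operations and, by continuity and openness, with the interior operator, so $\phi$ fails at any point of $f^{-1}(r)\neq\emptyset$.

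The heart of the matter, and where the hypotheses on $X$ are indispensable, is the geometric construction of this interior map $f$. The strategy is an inductive ``dissection'' of $X$: starting from an open ball $B\subseteq X$ corresponding to the root $r$, one exploits the absence of isolated points together with the metric to peel off pairwise disjoint open subregions of $B$, one for each immediate $\leq$-successor of $r$, leaving behind a nowhere-dense remainder assigned to $r$; one then recurses inside each peeled-off open set using the subordinate sub-preorder, arranging the closure relations among the pieces to mirror $\leq$ on $F$. For $X=\mathbb{R}$ this dissection can be realized very concretely by iterated subdivisions of open intervals and removal of Cantor-type sets, yielding $(4)\Rightarrow(1)$; for a general dense-in-itself metric $X$ the same scheme applied to any open ball delivers $(3)\Rightarrow(1)$, closing the circle.
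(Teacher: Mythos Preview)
The paper does not actually provide a proof of this theorem: it is stated in the introduction as a classical background result (``We can summarize the above discussion in the following neat theorem, that we can and will attribute to McKinsey, Tarski and Kripke''), preceded only by an informal discussion of the Alexandrov correspondence between preorders and topologies and the finite model property of $\sf S4$. So there is no proof in the paper to compare your proposal against.

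That said, your sketch is the standard argument and is correct in outline. The soundness direction and the implications $(2)\Rightarrow(k)$ are indeed routine; your use of the Alexandrov correspondence plus Kripke completeness and the finite model property for $(5)\Rightarrow(1)$ and $(6)\Rightarrow(1)$ matches exactly the brief discussion the paper gives just before the theorem. For $(3)\Rightarrow(1)$ and $(4)\Rightarrow(1)$ you correctly identify the McKinsey--Tarski dissection/interior-map technique as the substantive step and describe it accurately; the paper says nothing about how this is done, simply citing \cite{tm}. One small caveat: the original McKinsey--Tarski argument required separability of $X$ in addition to dense-in-itself and metric; the unrestricted version you describe (arbitrary dense-in-itself metric space) is a later strengthening due to Rasiowa--Sikorski and, in full generality, to more recent work. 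Since the paper states the theorem without the separability hypothesis and gives no proof, this is a point about the literature rather than about your write-up, but if you were to flesh out the dissection argument you would need to be careful about which version you are invoking.
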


One can say that finite topological space or their natural extension to Alexandrov topological spaces reflect faithfully 
the $\sf S4$ semantics, and that arbitrary topological spaces generalize $\sf S4$ frames. On the other hand,
every topological space gives rise to a normal modal logic. Indeed $\sf S4$ is the modal logic of $\mathbb{R}$, or 
any metric that is  separable and dense in itself 
space, or all topological spaces, as indicated above. Also a 
recent result is that it is also the modal logic of the Cantor set, which is known to be Baire isomorphic to 
$\mathbb{R}$. 

But, on the other hand,  modal logic is too weak to detect interesting properties
of $\mathbb{R}$, for example it cannot distinguish between $[0, 1]$ and $\mathbb{R}$ despite
their topological disimilarities, the most striking one being compactness; $[0, 1]$ is compact, but $\mathbb{R}$ is 
not. 

To make $\sf S4$ stronger and more expressive, one  can enrich the modal language. {\it Hybrid languages} are such; 
they have proposition letters called 
{\it nominals} and {\it global modality}. Nominals 
denote singleton sets and 
global modality allows to say that a formula holds somewhere. In Hybrid 
modal logic one can say that the closure of any singleton is itself, by $\Diamond i\to i$ ($i$ a nominal) 
which is valid in 
$\mathbb{R}$ but not in spaces that are not $T_1$. 
Hence $T_1$ is definable by nominalis and the Hybrid logic of $\mathbb{R}$ 
is {\it not} that of any topological space and so it is stronger than $\sf S4$.

One can also enrich the language of $\sf S4$ with a modal operator $[a]$ giving it a temporal dimension; $[a]$ interpreted as 'next'. 
If $X$ is a topological space and $f:X\to X$ is a continous function, then the pair $(X,f)$ is called a 
{\it dynamic space  over $X$}. If $f$ is the identity function, then this is a {\it static} space; it is nothing more than $\sf S4$, 
because the 'next' world 
is only the same world. The field of {\it dynamic topological logic} dealing with dynamic spaces over topological spaces, 
modalizing dynamical systems,
is quite an active field of research ; providing a unifying framework
for studying the confluence of three rich research areas: the topological semantics for $\sf S4$, 
topological dynamics, and temporal logic.

\begin{definition} A {\it dynamic topological model on $X$} consists of a dynamic space $(X, f)$ over $X$ and a valuation $i$ of propositional 
variables to subsets of $X$  such that 
\begin{align*}
i(\neg \phi)&=X\sim i(\phi),\\
i(\phi\land \psi)&=i(\phi)\cap i(\psi),\\
i(I(\phi))&={\sf int}i(\phi)\\
i([a]\phi)&=f^{-1}(i(a)).
\end{align*}
\end{definition}
The resulting modal logic is called $\sf S4C$. 
We have a completeness theorem here as well:
\begin{theorem} For any formula $\phi$ the following are equivalent:
\begin{enumarab}
\item $\sf S4C\vdash \phi$
\item $\phi$ is topologically valid
\item $\phi$ is true in any finite topological space
\end{enumarab}
\end{theorem}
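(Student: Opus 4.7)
The plan is to prove the chain $(1) \Rightarrow (2) \Rightarrow (3) \Rightarrow (1)$, where $(2) \Rightarrow (3)$ is immediate since any finite topological space is a topological space. For soundness $(1) \Rightarrow (2)$, I would verify that each axiom schema of $\sf S4C$ is valid in every dynamic topological model $(X,f,i)$. The $\sf S4$ schemas for $I$ are handled by the McKinsey--Tarski validities recalled above, since the interior operator $\sf int$ is idempotent, deflationary, and meet-preserving. The $\sf K$ axiom $[a](\phi\to\psi)\to([a]\phi\to[a]\psi)$ and the necessitation rule for $[a]$ follow because $f^{-1}$ is a Boolean homomorphism of $\wp(X)$. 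The crucial continuity axiom $[a]I\phi\to I[a]\phi$ corresponds precisely to the set-theoretic inclusion $f^{-1}({\sf int}(U))\subseteq {\sf int}(f^{-1}(U))$, which is equivalent to continuity of $f$.

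For the hard direction $(3) \Rightarrow (1)$, I would argue contrapositively and algebraically, following the paradigm that motivates the paper. If $\sf S4C\not\vdash\phi$, form the Lindenbaum--Tarski algebra $\mathfrak{A}$ of $\sf S4C$: this is a Boolean algebra equipped with an interior operator $I$ (so that $(\mathfrak{A},I)$ is an $\sf S4$-algebra in the sense of McKinsey--Tarski) together with a Boolean endomorphism $\tau$ induced by $[a]$, satisfying $\tau(Ix)\leq I\tau(x)$. Because $\phi$ is unprovable, there is a homomorphism into a nontrivial quotient on which the class of $\phi$ is not top. To pass to a topological model, I would apply the McKinsey--Tarski representation theorem to embed $(\mathfrak{A},I)$ into the interior algebra of a topological space $X$; the dual of $\tau$ then yields a continuous self-map of $X$, giving a dynamic topological model falsifying $\phi$. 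This already proves the equivalence of $(1)$ and $(2)$.

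To upgrade this to the finite model property $(3)$, I would filtrate through the (finite) set $\Sigma$ of subformulas of $\phi$, closed under negation and a suitable closure that respects $I$ and $[a]$. Quotienting the dynamic topological model by logical equivalence on $\Sigma$ yields a finite Kripke-style model with a pre-order (from the interior filtration, as in standard $\sf S4$ filtration) and an induced map; endowing the finite quotient with the Alexandrov topology $O_\le$ of its pre-order transforms this into a finite topological model. The main obstacle is the preservation of continuity: the naive induced function on the filtrated quotient need not be a morphism for the Alexandrov topology. I would address this either by choosing the minimal filtration so that $x\le y$ holds exactly when all $I\psi\in\Sigma$ true at $x$ are true at $y$, then verifying that the $[a]$-filtration axiom forces $f$-images of upward closed sets in $\Sigma$ to remain upward closed, or by replacing ordinary filtration with a selective filtration that preserves the continuity inequality $\tau\circ I\le I\circ \tau$ on the finite quotient algebra and then dualising back to a space. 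This step is where the argument genuinely departs from the standard $\sf S4$ completeness proof, and it is the technical core of the theorem.
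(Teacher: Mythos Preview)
The paper does not actually prove this theorem: it is stated in the introductory survey subsection on topological semantics as a known background result from the literature on dynamic topological logic, with no accompanying argument. There is therefore no ``paper's own proof'' to compare your proposal against.

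That said, your outline is essentially the standard route to this result and is sound in its broad strokes. The soundness direction $(1)\Rightarrow(2)$ is exactly as you describe, with the continuity axiom $[a]I\phi\to I[a]\phi$ matching the inclusion $f^{-1}({\sf int}\,U)\subseteq{\sf int}(f^{-1}U)$. For $(3)\Rightarrow(1)$, the filtration strategy you sketch is the right idea, and you have correctly isolated the one genuine obstacle: the induced self-map on the finite filtrated quotient need not be monotone for the Alexandrov preorder unless the filtration is chosen with care. In the literature this is handled by closing $\Sigma$ not only under subformulas but also under enough iterates $[a]^k\psi$ (or, equivalently, working with a suitable transitive-closure filtration on the $[a]$-relation) so that the continuity inequality survives the quotient; your ``selective filtration'' remark points in the right direction but would need to be made precise to constitute a proof. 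Your intermediate step of representing the Lindenbaum algebra on a topological space before filtrating is unnecessary---one can filtrate the canonical model directly and then pass to the Alexandrov topology---but it does no harm.
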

But here there are derivable formulas that are not valid in $\mathbb{R}$. 
However,  such dynamic topological logics, have a very interesting completeness theorem, namely, 
that for any formula that is {\it not} derivable, there exists a countermodel in $\mathbb{R}^n$ 
for $n$ sufficiently large, where the upper bound
of the dimension, namely $n$,  is charaterized by the modal depth of
such a formula. The techniques used suggest that such a modality, or perhaps a similar one, 
may be used characterize the geometric notion of dimension, but further research is 
needed.

Topological interpretations of propositional topological logic 
were recently extended in a natural way to arbitrary theories of full first order logic by 
Awodey and Kishida using so-called {\it topological sheaves} to
interpret domains of quantification \cite{ak}. 

They prove that $\sf S4\forall$ (predicate $\sf S4$ logic) is  complete with respect to such extended topological semantics, using
techniques related to recent work in topos theory. Indeed, historically Sheaf semantics was 
first introduced by topoi theorists for higher order intuitionistic logic, and has beebn applied
to first order modal logic, by both modal and categorical logicians. 

Sheaves or pre-sheaves 
taken over a possible world structure- most notably Kripke sheaves
over a Kripke frame can be regarded as extending
the structure to the first order level with variable domains 
of individuals; the modality arises naturally from a gemoetric 
morphism between the topos of such  sheaves of the associated world structures.
The completeness proof in essence 
is a translation of a Henkin construction; implementing a so-called `de modalization process' 

Given a first order modal language, the construction gives a first order non-modal language and a surjective interpretation from the former to
the latter, along this interpretation we can have a non-modal version of a given modal
theory.
So the modal predicate language is reduced to an ordinary predicate one by elimninating the $\sf S4$ operation, 
but its models of consistent theories built by a Henkin usual construction are interpretable, or rather
give rise to,  models of  the original predicate modal language.

In this paper we also study algebraically a predicate version  
of the modal topological logic described above. One way of doing this is that we deal with the same syntax in 
\cite{ak}, but we alter the semantics, dealing with usual Kripke semantics, proving a stronger result, namely,
an interpolation theorem; we also touch on dynamic topological predicate logics.
  
But next we proceed differently; the modus operandi, and the overall goals of our work are 
different, too. 

We assume that the models carry a topology, but now models are more complex; 
they are structures
for first order logic.
Consider such a structure $\M$ for a given first order language in a certain signature having a sequence of variables of order type $\omega$  
and assume that its underlying set $M$ is endowed with a topology. Then the set of all assignments
satisfying a formula $\phi$ interpreted the usual Tarskian way 
can be seen as an $\omega$-ary relation on $M$, call 
it $\phi^{\M}$.

Unlike the approach adopted in \cite{ak}, where there is only one $\sf S4$ modality, 
here for each $k<\omega$, we add to the syntax an operation $I_k$ interpreted at a formula $\phi$ 
as those sequences $s$ satisfying $\phi$
except that at the $k$th 
co-ordinate we require that $s(k)$ is in the interior of 
the $kth$ component of $\phi^{\M}$, so we get a smaller set than $\phi^{\M}$.
So here we have $\omega$ many modalities, not just one, each acting on 
one component of the set of sequences satsifying a given formula; when we deal with only finitely many variables $m$,
we will have $m$ modalities, but we shall also look at cylindrifiers as diamonds dealing with $2m$ 
multi dimensional propositional 
modal logic.

A {\it completeness and an omitting types theorem} are proved algebraically 
by Georgescu in \cite{g} for usual first order logic (with infinitely may variables) 
with such semantics  involving the interior operators induced by a topology on the base of models.

But as it happens, there is also a {\it modal approach} to topological predicate logic \cite{Chang, g, z2}. 
Each interior operator can  again be viewed as a {\it modality} $\Box_i$, called {\it Chang's modal operator}
and its semantics is specified by a {\it Chang system} for a model $\M$, which is a function
$V:M\to \wp(\wp (M))$. The semantics is now defined as follows: 
$$s\in \Box_i\phi^{\M}\Longleftrightarrow \{u\in M: s^i_u\in \phi^{\M}\}\in V(s_i).$$
Here $s^i_u$ is the function that is like $s$ everywhere except that its value at $i$ is $u$.
If $M$ carries a topology $\mathfrak{O}$ say, then this gives a natural Chang system
defined by $V(x)=\mathfrak{O}$, for all $x\in M$. A completeness theorem was also 
proved by Georgescu \cite{g2} for Chang's modal logics using polyadic algebras.
So one can view topological modals as  nice semantics for Chang's $\sf S4$ modal logic.

\subsection{The process of algebraisation}

We go further in the analysis carried out in \cite{g, g2} using also an algebraic approach, but on a wider scale, 
proving  stronger and much more results. 
In the above cited references Georgescu uses the representation theory of {\it locally finite polyadic algebras with equality}, here we use
the representation theory of {\it dimension complemented cylindric algebras}, which is not only of a strictly
wider scope, but is actually much simpler.
Both cases reflect a {\it Henkin construction}, but in the case of polyadic algebras the procedure is much more complex. 
One starts with an algebra {\it dilates it}, meaning embedding it into a reduct of an algebra having infinitely extra dimensions,
fixes some of the extra dimensions obtaining a {\it free or rich extension of $\A$}, 
and then `constants' are stimulated as {\it algebraic endomorphism} on the dilated algebra,
and these endomorphisms
are used to eliminate cylindrifiers, witness \cite[p.449]{g}.

In cylindric algebras one also dilates the algebra, but then 
cylindrifiers are eliminated by the {\it spare dimensions}
via certain Boolean ultrafilters (which we call Henkin ultrafilters; that correspond exactly to Henkin's notion of {\it rich theories}). 
In this case a constant is not a complicated algebraic entity like an endomorphism, 
but it can be viewed as  simply {\it an index in the dilated dimension} which conforms more to Henkin's notion of expanding
the language by adding contstants or witnesses for existential formulas. 
This makes life much easier and also the construction lends itself
to more general contexts.

Indeed our results address possibly infinitary extensions of topological first order logic, and
Chang's modal logic.
We not only prove completeness and an 
omitting types theorem for such logics, but we also prove
an interpolation theorem, analagous to the Craig interpolation theorem
for first order logic, but in a more general setting. 

The results in \cite{g, g2},  are special cases of two of our three results proved for topological logic and Chang's modal logic.
The new interpolation theorem proved here which is not approached at all in the two cited references,
is next elaborated upon in a {\it universal algebraic way} as done
in {\it abstract algebraic logic.}

From the algebraic point of view, we depart from the so-called {\it locally finite} and {\it dimension complemented} algebras.
An algebra $\A$ is locally finite if the {\it dimension set } of every element in $\A$.
The dimension set of an element in $\A$ reflects the number of variables in the formula of the 
corresponding Tarski-Lindenbaum  algebra of formulas. 

An algebra is dimension complemented if the complement of the dimension set of every element is infinite; this reflects, 
in turn, that infinitely many variables lie outside the formula corresponding to the element, but the possibility
remains that this formula contains infinitely many variables, so such logics have an infinitary flavour. 
In fact, they can be seen as an instance of the so-called {\it finitary logics with infinitary predicates} \cite{ANS, BP, AGN, HMT2, Sayed}, 
finitary here, in turn, 
points out to the fact that quantification is only allowed
on finitely many variables, as is the case
with first order logic.

This is a natural generalization of first order logic, for in many classical theorems of first order logic, like 
Godel's completenes theorem, Craig interpolation theorem and the Orey-Henkin omitting 
types theorem, the proof does not depend on the fact that every formula contains many (free) variables
but rather on the weaker fact that {\it infinitely many variables} lie outside each formula, because
in such a case witnesses for existential formulas 
in Henkin constructions can, like the case with first order logic,  always be found.

But locally finite algebras, the algebraic counterpart
of topological predicate logic,  and for that matter the larger class of dimension complemented algebras, have
some serious defects when treated as the
sole subject of research in an autonomous algebraic theory.

In universal algebra one prefers to deal
with {\it equational classes} of algebras i.e. classes
of algebras characterized by
systems of postulates, in which
every postulate has the form of an equation (an identity).
Such classes are also referred to as {\it varieties}.

Classes of algebras which are not varieties are often
introduced in discussions
as specialized subclasses of varieties.
One often treats fields as a special case of rings.
This is due to the tradition that in algebra, mainly the equational
language and thus equational logic is used.
Thus, finding an equational form for an algebraic entity is always a value on its own
right.

Another reason for this preference, is
the fact that every variety is closed under
certain general closure operations frequently
used to construct new algebras from given ones.
We mean here the operations of forming subalgebras,
homomorphic images and direct products.
By a well known theorem of Garrett Birkhoff,
varieties are precisely those classes of algebras that have all
three of these closure properties.
Local finiteness does not have the form of an identity,
nor can it be equivalently
replaced by any identity or system of identities, nor indeed
any set of first order axioms.
This follows from the simple observation that
the ultraproduct of infinitely
many locally finite algebras   is not, in general,
locally finite, and a first order
axiomatizable class is necessarily closed under ultraproducts.
The same applies to the class of dimension complemented algebras.

The definition of local finiteness  contains an assumption
which considerably restricts the scope of the definition and thus
it is very tempting to just drop it, and se what happens. As is the case with Tarski's cylindric algebras, a lot does.
We hope, and in fact we think,  
that the reader will be  convinced of this bold declaration after reading the paper. 

Indeed, the restrictive character of this  notion
becomes obvious when we turn our attention to {\it cylindric set algebras}; these are concrete having having top element a 
{\it cartesian square},  namely, a set of the form $^{\alpha}U$, $\alpha$ an ordinal is the dimension; the Boolean operations 
are the usual operations
of intersection and complementation with
respect to $^{\alpha}U$ and cylindrifiers and diagonal elements are defined reflecting the semantics of existential quantifiers and equality.
If for $s, t\in {}^{\alpha}U$ and $i<\alpha$, $s\equiv_ i t$ means that $t(j)=s(j)$ for all $j\neq i$, 
then the {\it $i$th cylindrifier} is defined via 
$${\sf c}_iX=\{s\in {}^{\alpha}U: \exists t\in X (s\equiv_ i t)\}, X\subseteq {}^{\alpha}U,$$
and the $i, j$ diagonal via
 $${\sf d}_{ij}=\{s\in {}^{\alpha}U: s_i=s_j\}.$$
We find that there are such set algebras of all dimensions,
and set algebras that are not locally finite
are easily constructed. 

We  thereby {\it simply remove the condition of local finiteness} and also we will have occasion to deal with
topological cylindric algebras of finite dimension extending many deep results proved for cylindric algebras, and proving new ones. 
  
For the infinite dimensional case we study the {\it corresponding minimal algebraisable extension} 
of both predicate topological logic and Chang's modal logic, 
that necessarily allow infinitary predicates. The condition of local finiteness in the infinite 
dimensional case is not warranted from
the algebraic point of view because
it is a property that cannot be expressed by first order formulas, let alone equations or quasi-equations.

Roughly, minimal extension here means this (algebraizable) logic
corresponding to the quasi-variety generated by the class of algebras
arising from ordinary topological  predicate logic, namely, the class of locally  finite algebras. This correspondence is taken in the sense of
Blok and Pigozzi associating quasi-varieties to algebraizable logics \cite{BP}.

In algebraisable extensions of first order logic studied by Henkin, Monk and Tarski
and Blok and Pigozzi in \cite{HMT2, BP}, and even earlier 
by Andr\'eka and N\'emeti \cite{AGN},
the notion of a {\it formula schema} plays a key role. 
If we have a set of formulas $F$ say, then a formula schema is an element of $F$.
An instance of a formula schema
is obtained by substituting formulas for the formula variables, i.e for atomic formulas,
in this formula schema. A formula schema is called {\it type-free valid} if
all its instances are valid.
This is a new notion of validity defined in \cite[Remark 4.3.65]{HMT2}.

A drawback at least from the algebraic point of view for 
ordinary first order logic, and for that matter predicate topological logic 
is the following:
There are type-free valid formula schemas $\psi$, say of first order logic that
are not {\it uniformly} provable. Though each instance of $\psi$ is
provable,  these proofs vary from one instance
to the other. We cannot give a uniform proof of all these instances in spite
of there being a uniform cause $\psi$ of their validity.

The reason for this phenomena is that the standard formalism of first order logic
is {\it not} structural  in the sense of \cite{BP}.
In fact, this formalism is not even
{\it substitutional} in the sense  of \cite[definition 4.7 (ii) p.72.]{ANS}.
This means that a formula resulting from
{\it substituting} formulas for atomic formulas in any  valid formula, may not be valid.
To remedy this ``defect"  one can give a {\it structural formalism} of first order logic.

Following \cite{AGN, ANS} a logic is a quadruple $(F, \bold K, {\sf mng}, \models)$ where $F$ is a set (of formulas) in a certain signature,
$\bold K$ is a class of structure ${\sf mng}$ is a function with
domain $F\times \bold K$ and $\models \subseteq F\times F.$
Intuitively, $\bold K$ is the class of structures for our language
${\sf mng}(\phi, M)$ is the interpretation of $\phi$ in $M$, possibly relativized,
and $\models$ is the pure semantical
relation determined by $\bold K$. This of course is too broad a definition.
An algebraisable logic is defined next.   

\begin{definition} A logic  $(F, \bold K, {\sf mng}, \models)$ with formula algebra $\F$
of signature $t$ is {\it algebraizable} if
\begin{enumarab}

\item A set $Cn\L$ the logical connectives fixed and each $c\in Cn\L$ finite rank
determining  the signature $t,$

\item There is set $P$ called atoms such that $\F$ is the term algebra or
absolutely free algebra over $P$
with signature $t$,

\item ${\sf mng}_M=\langle {\sf mng}(\phi, M): \phi\in F\rangle \in Hom(\F)$,

\item There is a derived binary connective $\leftrightarrow$
and a nullary connective
$\top$ that is compatible with the meaning functions,
so that for all $\psi, \phi\in F$, we have ${\sf mng}(\phi)={\sf mng}(\psi)$ iff $M\models \phi\leftrightarrow\psi$
and $M\models \phi$ if $M\models \phi\leftrightarrow \top,$

\item For each $h\in Hom(\F,\F)$, $M\in \bold K$, there is an $N\in \bold K$
such ${\sf mng}_N={\sf mng}_M\circ h,$  so that validity is preserved by homomorphisms.
\end{enumarab}
\end{definition}

Item (5) is what guarantees that instances of valid formulas remain valid for a homomorphism applied to a formula
$\phi$ amounts to replacing the atomic formulas in $\phi$ by
formula schemes.
This is a crucial property for a logic to allow algebraization.

To form the algebraic counterpart of such a logic, which is a quasi-variety, there are essentially two conceptually different means.
One can define it  {\it syntactically using quasi-equations} via a Hilbert style axiomatization involving type free valid schemas that 
translate to quasi-equations in the signature $t$ \cite{ANS, BP}.
Or alternatively one can proceed semantically, defining the algebraic counterpart as the quasi-variety 
generated by the 'meaning algebras $\{{\sf mng}_M(\F): M\in \bold K\}$. 
These two notions in general are distinct, but in favourable circumstances they can coincide;
indeed this is the case when we have a completeness theorem
\cite{ANS}. Structural formalism of first order logic and non finite Hilbert-style complete axiomatizations go hand in hand.
Such issues will be approached in some depth below; where we show that this phenomena persists in the new topological 
context.

\subsection{Sample of results}

For the algebraisable version of 
topological logic we show that the corresponding algebraic counterpart, 
call it $V,$  is a not only a quasi-variety, but is in fact a 
variety, that is an expansion of the variety of representable 
cylindric algebras of infinite dimensions by interior operators. 

A plethora of results on representability 
and amalgamation for $V$ are proved. For example we show that the variety of representable algebras coincides with the class of algebras
having the {\it neat embedding property}, lifting a famous result of Henkin proved for cylindric algebras when we count in interior operators.

In universal algebra and indeed in the newly born field of universal logic a crucial 
and extremely fruitful role is played by the fact that certian {\it global} properties of {\it varieties}, like the variety $V$ above, 
typically amalgamation  properties are mirrored in corresponding
{\it local properties} of their free algebras, typically congruence extension properties and even equational consequence relations in the variety itself, 
which in turn corresponds to various forms of interpolation
when we happen to have an order, like the Boolean order, a condition that holds in our subsequent investigations.
The synthesis of these characterizations provides an illuminating and potentially very useful
bridge between the paradigms of algebra and 
logic, with results enriching both.

In this paper all results in the late \cite{MS}, on {\it interpolation}, 
{\it congruence extension properties} on free algebras and various forms of 
amalgamation on classes of 
algebras are obtained for $V.$ As a sample we show that the class of semi-simple algebras have the 
amalgamation property but $V$ itself does not, and the former result is equivalent to the fact the free algebras
satisfy a natural weak form of interpolation, call it $WIP$. From the second result we can infer that the free algebras {\it do not} satisfy the 
usual Craig interpolation property; in fact, it turns out that they do not  satisfy an interpolation property strictly weaker
than the Craig interpolation property, but of course strictly stronger than the $WIP$.
Sharp results on non-finite axiomatizability are obtained for several 
subvarieties of $V$ whose members have a 
{\it neat embedding property}, to be clarified below. 
Entirely analogous results are 
obtained for the variety corresponding to the algebraisable extension of Chang's predicate $\sf S4$ and 
$\sf S5$  modal logic.

We shall also show that several approximations of the variety of representable algebras cannot be 
axiomatized by a finite schema of equations. Such varieties are defined
via the notion of {\it neat reducts} an old venerable notion in the theory of cylindric algebras. Given $\alpha<\beta$, the 
{\it $\alpha$ neat reduct
of a $\beta$ dimensional algebra} 
is a subalgebra of the reduct of $\B$ obtained by discarding all operations indexed by $\beta\sim \alpha$ and keeping only
$\alpha$ dimensional elements. Denoting the class of topological cylindric algebra 
of dimension $\mu$ by $\TCA_{\mu}$, the $\alpha$ neat reduct of $\B\in \TCA_{\beta}$
is denoted by $\Nr_{\alpha}\TCA_{\beta}$; the latter is a $\sf TCA_{\alpha}$. 
A classical result of Monk (which we prove an analogue thereof for 
topological cylindric algebras) 
says that for cylindric algebras $\sf CA$s
If $\alpha>2$, $S\Nr_{\alpha}\CA_{\alpha+n}\neq \sf RCA_{\alpha}$ for all $n\in \omega$, where $\sf RCA_{\alpha}$ denotes the class of representable
$\CA_{\alpha}$s. 
On the other hand, a classical result of Henkin, which is a strong algebraic extention of Godel's completenes theorem, proved using a Henkin 
construction too, says
that $S\Nr_{\alpha}\CA_{\alpha+\omega}=\sf RCA_{\alpha}$, which we prove for $\TCA$s. 

We also prove the following 
result extending a recent result
of the present author and Robin Hirsch for 
several cylindrc-like algebras, namely:

\begin{theorem}\label{new} Let $\alpha>2$ be an  ordinal. Then for any $r\in \omega$, for any
finite $k\geq 1$, for any $l\geq k+1$ (possibly infinite),
there exist $\B^{r}\in S\Nr_{\alpha}\TCA_{\alpha+k}\sim S\Nr_{\alpha}\TCA_{\alpha+k+1}$ such
$\Pi_{r\in \omega}\B^r\in S\Nr_{\alpha}\TCA_{\alpha+l}$.
In particular, for any such $k$ and $l$, and for $\alpha$ finite, $S\Nr_{\alpha}\TCA_{\alpha+l}$ is not finitely axiomatizable over
$S\Nr_{\alpha}\TCA_{\alpha+k}$, and for infinite $\alpha$,  $S\Nr_{\alpha}\TCA_{\alpha+l}$ is not axiomatizable
by a finite schema over $S\Nr_{\alpha}\TCA_{\alpha+k}$.
\end{theorem}

In contrast we introduce another variety of {\it topological polyadic algebras of infinite dimensions}; the term
polyadic refers to the fact that the  signature of this new class contains all substitutions,  so is closer
to the polyadic paradigm, and  prove that such a variety can be axiomatized by a finite schema and it further 
enjoys the super amalgamation property.

\subsection{Product of modal logics}

We shall also deal rather extensively with 
topological logic with {\it only finitely many variables}, corresponding to finite dimensional topological
cylindric algebras of dimension $m$ say, with $m\in \omega$. 
Such a logic can be viewed as a 
predicate logic with $m$ variables enriched by $m$ modalities,
or as a propositional multi-dimensional modal logic with $2m$ modalities; call it $\L_m$. 

We show that for $m>2$ ($m$ finite), $\L_m$ is not finitely 
axiomatizable, it is undecidable, it is undecidable to 
tell whether a finite frame is a frame for  $\L_m$, $\L_m$ fails Craig interpolation and Beth definability,
and $\L_m$ fails the omitting types theorem 
in a very strong sense, even if we 
allow clique guarded semantics. 
We shall adress deeply decidability isues for such logics, by viewing them as {\it product}
modal logics.

One of the main reasons for the praise of modal logics in computer science
is their robust decidability, which is preserved under forming combinations of modal logics
like products, as long as there are no interaction axioms or constriants (fusions). This situation however
changes drastically as soon as some kind of interaction between the modalities is imposed. In fact, straightforward constructions
of combined modal logics from the simple 1-dimensional ones will almost certainly result in computionaly complex
logics. The fact that all three dimensional modal logics are undecidable can be intuitively
explained by the undecidability of the product $S5^3$ and its relation to
the undecidable fragment of
first order logic with $3$ variables; represented algebraically by $\sf CA_3$. 
But unlike $\CA_2$, even some two dimensional modal logics are undecidable, like products of transitive 
frames.  

Such a view will enable us to show that
unlike first order logic with two variables, the topological logic $\L_2$ with two variables 
is undecidable and does not have
the finite model property
It will then readily follows the equational theory of 
$\sf TRCA_2$ is undecidable, a significant point of deviation from cylindric algebras.

The `two dimensional undecidability result'  will be done by encoding tilings; 
that is encoding the $\mathbb{N}\times \mathbb{N}$ grid 
using the the two interior 
operators, which is the standard technique of proving undecidability
for many modal logics. But we will also show that $\L_2$ is finitely 
axiomatizable, but does not have the finite model property. 
There are refutable formulas that cannot be refuted in finite
Kripke models. The latter result holds  too for $\L_n$ when $n\geq 3$, but this is utterly unsurprsing.

Products of modal logics, like  temporal, spatial, epitemistic logics or multi-dimensional 
modal languages
interpreted in various product-like frames are very natural and clear formalisms
arising in both pure logic and numerios applications, like multi-agent systems. 
For example, dynamic topological logic dealt with earlier can be  interpreted 
semantically in products
of the form $(T, <)\times (W, R)$ where $(T, <)$ models the flow of time and $(W, R)$ is a frame for $\sf S4$ 
representing the topological space, with the $\sf S4$ box being also  
interpreted as the interior operator. By interpreting
$W$ as a domain of objects that can change
over time, one can view such product frames
as models for finite variable 
fragments of first order 
temporal and modal logics.

We shall also deal with guarded versions of $\L_m$ by relativizing the set of worlds or states, 
obtaining a finite variable fragment of predicate topological logic having 
nice modal 
behaviour. We show that such logics (with any number of finitely many varibales) is finitely axiomatizable, have the finite model property,
is decidable (in a strong way; in fact the universal theory of its modal algebras is decidable), and have 
the interpolation property.

\subsection{Concluding}

The algebraic facet of this paper can be seen as a refresher to proofs of many deep results proved for cylindric algebras,
and also {\it new ones} for cylindric algebras by passing to reducts of topological cylindric algebras by discarding the interior 
operators. 
Such results include the deep results of Andr\'eka \cite{Andreka} on the {\it complexity of universal axiomatizations} 
of the variety of representable cylindric algebras,
which lift {\it mutatis mutandis} to the `topological addition', 
the  answer to problem 2.12 in \cite{HMT1} given in \cite{HHbook}, together with its infinite analogue,
the main result in 
\cite{IGPL2} which is the solution to problem  4.4 in \cite{HMT2}, and all the results in \cite{Comer, AUU, MStwo, MS, AUamal, IGPL}
answering all  open 
problems in Pigozzi's landmark paper \cite{P} and more, several results in \cite{1}
confirming three conjectures of Tarski's on  cylindric algebras, 
formulated in the language of category theory, a theme initiated in 
\cite{conference}. 

We use advanced sophisticated machinery of cylindric algebra theory, like so called  rainbow constructions 
\cite{HH, HHbook, Hodkinson, HHbook2}, 
obtaining new results, strengthening results in \cite{ANT, HH, Hodkinson}
both algebraic and metalogical which we formulate for topological logics with finitely many 
variables, and finally we use tilings twice to prove undecidability of topological logics with more 
than one variable.

Relativizing states, we also deal with finite variable fragments of such topological logics as multi -modal logics, and guarded fragments
of finite variable predicate topological logic. 
Using games we show that such logics 
are finitely axiomatizable, and using a model-theoretic 
result of Herwig and the well-developed duality theory between Kripke frames and complex algebras, 
we show that such logics having $n$ variables, are also decidable; the universal theory of their  modal algebras
is decidable,  and have the definability properties of Beth and Craig, 
for each finite $n$.

Due to the length of the paper it is divided into four parts.
\begin{enumarab}
\item  Part one: {\bf Topological logic via cylindric algebras}. 

\item Part two: {\bf Amalgamation, interpolation and congruence extension properties in topological algebras}.
 
\item Part three: {\bf Logical consequences for extensions of predicate toplogical logic}. 

\item Part four: {\bf Logical consquences for finite variable fragments 
of first order logic}. 
\end{enumarab}

Each part can be read separately modulo cross references 
to other parts.

\subsection*{On the notation of all parts, some required basics in Topology}

We follow more or less standard  notation. But for the reader's convenience,
we include the following list of notation that will be used throughout the  paper.

An \textit{ordinal} $\alpha$ is transitive set (i.e., any member of $\alpha$ is
also a subset of $\alpha$) that is well-ordered by $\in$. Every
well-ordered set is order isomorphic to a unique ordinal. For
ordinals $\alpha, \beta$, $\alpha < \beta$ we means $\alpha \in
\beta$. An ordinal is therefore the set of all smaller ordinals, so
for a finite ordinal $n$ we have $n = \{ 0,1, \dots, n-1 \}$ and the least infinite ordinal is
$\omega = \{ 0,1, 2, \dots \}$.

A \textit{cardinal} is an ordinal not in bijection with any smaller
ordinal, briefly an {\it initial ordinal} and the cardinality $|X|$ of a set $X$ is the unique
cardinal in bijection with $X$. Cardinals are ordinals and are
therefore ordered by $<$ (i.e., $\in$). The first few cardinals are
$ 0 = \phi, 1, 2, \dots, \omega$ (the first infinite ordinal),
$\omega_1$ (the first uncountable cardinal). A set will be said to be
\textit{countable} if it has cardinality $\leq \omega$,
\textit{uncountable} otherwise, and \textit{countable infinite} if
it has cardinality $\omega$. $2^{\omega}$ denotes the power of the continuum. 

For a set $X$,  $\wp(X)$ denotes the set of all subsets of $X$, i.e. the powerset of $X$.
${}^AB$ denotes the set of functions from $A$ to $B$.
If $f\in {}^AB$ and $X\subseteq A$ then $f\upharpoonright X$
denotes the restriction of $f$
to $X$. We denote by $\dom f$ and $\rng f$ the domain and range of a given function
$f$, respectively. $A\sim B$ is the set $\{x\in A: x\notin B\}.$

We frequently identify a function $f$ with the sequence $\langle f_x:x \in \dom f\rangle$.
We write $fx$ or $f_x$ or $f(x)$ to denote the value of $f$ at $x$.
We define composition so that the righthand function acts first, thus
for given functions $f,g$, $f\circ g(x)=f(g(x))$, whenever the left hand side is defined, i.e
when $g(x)\in \rng f$.

For a non-empty set $X$, $f(X)$ denotes the image of $X$ under $f$, i.e
$f(X)=\{f(x):x\in X\}.$ If $X$ and $Y$ are sets then $X\subseteq_{\omega}Y$ denotes that $X$
is a finite subset of $Y$.

Algebras will be denoted by
gothic letters, and when we write $\A$ then we will be tacitly assuming 
that $A$ will denote  the universe
of $\A$.
However, in some occasions we will identify (notationally)
an algebra and its universe. 

If $U$ is an ultrafilter over $\wp(I)$ and if $\A_i$ is some structure (for $i\in I$)
we write either  $\Pi_{i\in I}\A_i/U$ or $\Pi_{i/U}\A_i$ for the ultraproduct of the $\A_i$ over $U$.
Fix some ordinal $n\geq 2$.
For $i, j<n$ the replacement $[i|j]$ is the map that is like the identity on $n$, except that $i$ is mapped to $j$ and the transposition
$[i, j]$ is the like the identity on $n$, except that $i$ is swapped with $j$.     We will refer to maps from $\tau:n\rightarrow n$ as transformations.
A transformation is  finite if
the set $\set{i<n:\tau(i)\neq i}$ is finite, so  if $n$ is finite then all transformations $n\rightarrow n$ are finite.
It is known, and indeed not hard to show, that any finite permutation is a product of transpositions
and any finite non-injective map is a product of replacements. A transformation is infinitary if it is not finite.

A \emph{Topological space} $\bold X$ is a pair $(X,\tau)$ where $X$ is a set and $\tau$ a collection of subsets of $X$ 
such that $\emptyset,X\in \tau$ and $\tau$ is closed under arbitrary unions and finite intersections. 
Such a collection is called a \emph{topology} on $X$ and its members are called \emph{open} sets. 
The complements of open sets are called \emph{closed} sets. Clearly, both $\emptyset, X$ are closed 
and arbitrary intersections and finite unions of closed sets are closed. For $A\subseteq X$, we denote by ${\sf int}A$ the {\it interior}
of $A$, which is the {\it largest open set} contained in $A$.

$\bold{X}=(X,\tau)$ is {\it discrete} if $\tau=\wp(X)$. Note that $\bold X$ is discrete if and only if ${\sf int} A=A$ for every
$A\subseteq X$. The space $\bold X$ is {\it almost discrete} if for all $A\in \tau$, ${\sf cl}(A)={\sf int}{\sf cl}(A),$
where ${\sf cl}(A)$, the {\it smallest closed set} containing $A$, is the {\it closure} of $A$.
Notice that the operations ${\sf int}$ and ${\sf cl}$ are {\it dual}; for a topological space with underlying set $X$, 
and $A\subseteq X$, we have ${\sf cl}(A)=X\sim[{\sf int}\sim  A].$

A set of the form $\bigcap_{n\in\mathbb{N}} U_n$, where $U_n$ are open sets, is called a $G_\delta$ set, 
and a set of the form $\bigcup_{n\in\mathbb{N}}F_n$, where $F_n$ are closed sets, is called an $F_\sigma$ set.

Let $X$ be the underlying set of a  topological space. A set $A\subseteq X$ is called \emph{nowhere dense} 
if its closure ${\sf cl}(A)$ has empty interior.
(This means equivalently that $X\sim{\sf cl}(A)$ is dense). 
So $A$ is nowhere dense iff ${\sf cl}(A)$ is nowhere dense. A set $A\subseteq X$ is \emph{meager or of first category} 
if $A$ is the countable union of nowhere dense sets. 
The complement of a meager set is called \emph{comeager}. 
So a set is comeager iff it contains the 
intersection of a countable family of dense open sets.

\begin{theorem}
Let $X$ be a topological space. The following statements are equivalent:
\begin{enumerate}
\item Every nonempty open set in $X$ is nonmeager.
\item Every comeager set in $X$ is dense.
\item The intersection of countably many dense open sets in $X$ is is dense.
\end{enumerate}
\end{theorem}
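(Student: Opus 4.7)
The plan is to prove the implications cyclically: (1) $\Rightarrow$ (2) $\Rightarrow$ (3) $\Rightarrow$ (1). Two simple background facts will be used repeatedly: (a) any subset of a meager set is meager, since if $M = \bigcup_n N_n$ with each $N_n$ nowhere dense and $A \subseteq M$, then $A = \bigcup_n (A \cap N_n)$ and $A\cap N_n \subseteq N_n$ is nowhere dense; (b) a set $D\subseteq X$ is dense iff every nonempty open set meets $D$; equivalently a complement $X\setminus D$ has empty interior iff $D$ is dense.

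For (1) $\Rightarrow$ (2), I suppose for contradiction that some comeager set $C$ is not dense. Then the open set $U = X\setminus {\sf cl}(C)$ is nonempty and disjoint from $C$, so $U\subseteq X\setminus C$. Since $C$ comeager means $X\setminus C$ is meager, fact (a) forces $U$ to be a nonempty open meager set, contradicting (1).

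For (2) $\Rightarrow$ (3), let $(U_n)_{n\in\mathbb{N}}$ be dense open. Each $X\setminus U_n$ is closed with empty interior, hence nowhere dense. Thus $X\setminus \bigcap_n U_n = \bigcup_n (X\setminus U_n)$ is meager, so $\bigcap_n U_n$ is comeager, and by (2) it is dense.

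For (3) $\Rightarrow$ (1), suppose $U$ is a nonempty open meager set, so $U = \bigcup_n N_n$ with each $N_n$ nowhere dense. Replace $N_n$ by ${\sf cl}(N_n)$, which is still nowhere dense; then each $V_n := X \setminus {\sf cl}(N_n)$ is dense and open. By (3), $\bigcap_n V_n$ is dense, so it meets the nonempty open set $U$. However, $U \cap \bigcap_n V_n \subseteq U \setminus \bigcup_n N_n = \emptyset$, a contradiction. Hence no nonempty open set is meager, establishing (1).

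There is no real obstacle: each implication is a one-step unwinding of definitions together with the duality between dense open sets and closed nowhere dense sets. The only subtle point worth flagging is in (3) $\Rightarrow$ (1), where one must pass from the nowhere dense pieces $N_n$ to their closures in order to produce \emph{open} dense sets to which (3) can be applied; this is legitimate precisely because the closure of a nowhere dense set is nowhere dense, by definition.
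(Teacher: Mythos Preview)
Your proof is correct and is the standard cyclic argument for this equivalence. Note that the paper does not actually supply a proof of this theorem: it is stated as a well-known background fact from general topology (immediately followed by the definition of a Baire space), so there is no paper proof to compare against. Your argument fills this gap cleanly; the only small cosmetic point is that in (3) $\Rightarrow$ (1) you could write $U \cap \bigcap_n V_n = U \setminus \bigcup_n {\sf cl}(N_n)$ rather than passing through $U \setminus \bigcup_n N_n$, though of course the latter containment suffices.
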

\begin{definition}
A topological space is called a \emph{Baire space} if it satisfies any of the equivalent conditions of the above proposition.
\end{definition}

\begin{theorem}[The Baire Category theorem]Every completely metrizable space is Baire. Every locally compact Hausdorff space is Baire. 
\end{theorem}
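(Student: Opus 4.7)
The plan is to establish both statements by verifying condition (3) of the preceding proposition: the intersection of countably many dense open sets is dense. Concretely, given dense open sets $\{U_n\}_{n\in\mathbb{N}}$ in $X$ and an arbitrary nonempty open $V \subseteq X$, I will produce a point in $V \cap \bigcap_n U_n$. In both cases the argument is a nested-set construction combined with a compactness-type principle; the only thing that changes is the ambient geometric tool used to extract a limit.

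For the completely metrizable case, fix a complete metric $d$ compatible with the topology. Since $U_1$ is dense and open, $V \cap U_1$ is nonempty and open, so it contains a closed ball $\overline{B}(x_1, r_1)$ with $0 < r_1 < 1$. Inductively, having chosen $\overline{B}(x_n, r_n) \subseteq V \cap U_1 \cap \cdots \cap U_n$ with $r_n < 1/n$, use density of $U_{n+1}$ and openness of the interior to pick $\overline{B}(x_{n+1}, r_{n+1}) \subseteq B(x_n, r_n) \cap U_{n+1}$ with $r_{n+1} < 1/(n+1)$. The sequence $(x_n)$ is Cauchy, so by completeness it converges to some $x \in X$. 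Since $x \in \overline{B}(x_n, r_n)$ for every $n$, we get $x \in V \cap \bigcap_n U_n$, as required.

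For the locally compact Hausdorff case, replace metric balls by open sets with compact closure. Given a nonempty open $V$, since $V \cap U_1$ is nonempty and open, local compactness plus the Hausdorff separation axiom let me find a nonempty open $V_1$ with $\overline{V_1}$ compact and $\overline{V_1} \subseteq V \cap U_1$ (shrink around a point using a relatively compact neighborhood and Hausdorff regularity). Inductively choose nonempty open $V_{n+1}$ with $\overline{V_{n+1}}$ compact and $\overline{V_{n+1}} \subseteq V_n \cap U_{n+1}$. Then $\{\overline{V_n}\}$ is a decreasing sequence of nonempty closed subsets of the compact set $\overline{V_1}$, so it has the finite intersection property and $\bigcap_n \overline{V_n} \neq \emptyset$; any point there lies in $V \cap \bigcap_n U_n$.

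The main technical obstacle, in both parts, is guaranteeing that one can always \emph{shrink inside} while staying in the next $U_n$ and keeping the requisite closure property (complete boundedness in the metric case, compactness of closure in the other). In the metric case this is immediate because balls form a base and closed balls of small radius are contained in slightly larger open balls; in the locally compact Hausdorff case it requires the standard lemma that in a locally compact Hausdorff space, every neighborhood of a point contains an open neighborhood with compact closure, which rests on regularity. Once that shrinking step is in place, the two arguments run in exact parallel and the conclusion follows from the equivalence with clause (3) of the prior theorem.
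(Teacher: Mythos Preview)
Your proof is correct and is the standard argument for the Baire Category theorem. The paper, however, does not supply a proof of this statement at all: it is stated as a classical background result (in the preliminary section on topology) and immediately followed by the bibliography, with no \texttt{proof} environment. So there is nothing in the paper to compare against beyond noting that you have filled in what the paper took for granted, via exactly the expected nested-sets construction.
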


For operators on classes of algebras: ${\bf S}$ stands for the operation of forming subalgebras, ${\bf H}$ for
the operation of forming homomorphic images, ${\bf P}$ for the operation of
forming products, and $\bf Up$ for the operation of forming ultraproducts. In particular,
a class $\sf K$ is a variety iff ${\bf HSP}\sf K=\sf K$ and $\sf K$ is a quasi-variety if ${\bf SPUp}\sf K=\sf K$.

\section{Basics}

Let $\alpha$ be an arbitrary ordinal $>0$. Cylindric set algebras are algebras whose elements are relations of a certain 
pre-assigned arity, the dimension, endowed with set-theoretic operations
that utilize the form of elements of the algebra as sets of sequences.
$\B(X)$ denotes the Boolean set algebra $\langle \wp(X), \cup, \cap, \sim, \emptyset, X\rangle$.
Let $U$ be a set and $\alpha$ an ordinal; $\alpha$ will be the dimension of the algebra.
For $s,t\in {}^{\alpha}U$ write $s\equiv_i t$ if $s(j)=t(j)$ for all $j\neq i$.
For $X\subseteq {}^{\alpha}U$ and $i,j<\alpha,$ let
$${\sf c}_iX=\{s\in {}^{\alpha}U: \exists t\in X (t\equiv_i s)\}$$
and
$${\sf d}_{ij}=\{s\in {}^{\alpha}U: s_i=s_j\}.$$

$\langle \B(^{\alpha}U), {\sf c}_i, {\sf d}_{ij}\rangle_{i,j<\alpha}$ is called the {\it full cylindric set algebra of dimension $\alpha$
with unit (or greatest or top element) $^{\alpha}U$}. $^{\alpha}U$ is called a {\it cartesian space}.

Examples of subalgebras of such set algebras arise naturally from models of first order theories.
Indeed, if $\M$ is a first order structure in a first
order language $L$ with $\alpha$ many variables, then one manufactures a cylindric set algebra based on $\M$ as follows.
Let
$$\phi^{\M}=\{ s\in {}^{\alpha}{\M}: \M\models \phi[s]\},$$
(here $\M\models \phi[s]$ means that $s$ satisfies $\phi$ in $\M$), then the set
$\{\phi^{\M}: \phi \in Fm^L\}$ is a cylindric set algebra of dimension $\alpha$. Indeed
\begin{align*}
\phi^{\M}\cap \psi^{\M}&=(\phi\land \psi)^{\M},\\
^{\alpha}{\M}\sim \phi^M&=(\neg \phi)^{\M},\\
{\sf c}_i(\phi^{\M})&=\exists v_i\phi^{\M},\\
{\sf d}_{ij}&=:(x_i=x_j)^{\M}.
\end{align*}

Instead of taking ordinary set algebras, 
as in the case of cylindric algebras, with units of the form $^{\alpha}U$, one 
may require {\it that the base $U$ is endowed with some topology}. This enriches the algebraic structure. 
For given such an algebra, 
for each $k<\alpha$, one defines an {\it interior operator} on $\wp(^{\alpha}U)$ by
$$I_k(X)=\{s\in {}^{\alpha}U; s_k\in {\sf int}\{a\in U: s_a^k\in X\}\}, X\subseteq {}^{\alpha}U.$$
Here $s_a^k$ is the sequence that agrees with $s$ except possibly at  $k$ where its value is $a$.
This gives a {\it topological cylindric set algebra of dimension $\alpha$}.
The dual operation of $I_k$ is ${\sf Cl}_k$ defined by
$${\sf Cl}_k(X)=\{s\in {}^{\alpha}U; s_k\in {\sf Cl}\{a\in U: s_a^k\in X\}\}, X\subseteq {}^{\alpha}U.$$
Notice that when $U$ has the indiscrete topology, then ${\sf Cl}_k(X)={\sf c}_kX$. 

A more general semantics is provided by the {\it Chang systems}:
\begin{definition}
A {\it Chang system} is a pair $(U, V)$, where $U$ is a non-empty set 
and $$V:U\to \wp(\wp(U)).$$
\end{definition}
Given such a system, one can introduce unary operations, called {\it box operators}
on $\wp({}^{\alpha}U)$ as follows:
$$s\in \Box_iX\Longleftrightarrow \{u\in U: {\sf s}^i_u\in X\}\in V(s_i), X\subseteq {}^{\alpha}U.$$

The interior operators, as well as the box operators can also be defined on {\it weak spaces}, 
that is, sets of sequences agreeing co-finitely with a given fixed sequence. This makes a difference only when $\alpha$ is infinite.
We mention the case of interior operators, the box operators are defined entirely 
analogously  using Chang systems.

A {\it weak space of dimension $\alpha$} is a set of the 
form $\{s\in {}^{\alpha}U: |\{i\in \alpha: s_i\neq p_i\}|<\omega\}$ 
for a given fixed in advance $p\in {}^{\alpha}U$.
Now for $k<\alpha$, define
$$I_k(X)=\{s\in {}^{\alpha}U^{(p)}: \{s_k\in {\sf int}\{u\in U: s_k^u\in X\}\}.$$

But we can even go further. Such operations 
also extend to the class of representable algebras $\CA$s, briefly $\sf RCA_{\alpha}$. $\sf RCA_{\alpha}$ is defined to be the class
${\bf SP}\sf Cs_{\alpha}$. This class is also equal to ${\bf SP}\sf Ws_{\alpha}$, and 
it is known that $\sf RCA_{\alpha}$,  is a variety, hence closed under $\bf H$, 
though infinitely many schema of 
equations are required to axiomatize it \cite{Andreka}, witness also theorem 
\ref{neat} below.  

An algebra  in $\sf RCA_{\alpha}$ is isomorphic to a set algebra with universe $\wp(V)$;
the top element $V$ is a {\it generalized} space which is a set of the form $\bigcup_{i\in I}{}^{\alpha}U_i$, $I$ a set $U_i\neq \emptyset$ $(i\in I)$,
and  $U_i\cap U_j=\emptyset$ for $i\neq j$. The class of all such concrete algebras is denoted by $\sf Gs_{\alpha}$. We refer to $\A\in {\sf Gs}_{\alpha}$
as a {\it generalizd set algebra of dimension $\alpha$}.

So let $\A\in {\sf RCA}_{\alpha}$, and assume that $\A\cong \B$ where  
$\B\in \sf Gs_{\alpha}$ 
has top element  the  generalised space $V$.
The base of $V$ is the set $U=\bigcup_{s\in V}\rng s.$
Then one defines the interior operator $I_k$ on $\B$ by:
$$I_k(X)=\{s\in V: s_k\in {\sf int}\{a\in U: s_a^k\in X\}\}, X\subseteq V.$$
and, for that matter  the box operator relative to a Chang system $V:U\to \wp(\wp(U))$ as follows
$$s\in \Box_k(X)\Longleftrightarrow \{a\in U: s_a^k\in X\}\in V(s_k), X\subseteq V.$$

The following lemma is very easy to prove, so we omit the proof.
Formulated only for set algebras, it also holds for weak set algebras. 
\begin{lemma}\label{box}  For any  ordinal $\mu>1$, $\A\in {\sf Cs}_{\mu}$ and  $k<\mu$, 
let $I_k$ and $\Box_k$ be as defined above. Then
if $\A\in \sf Cs_{\alpha}$ has top element $^{\alpha}U$ and $\beta>\alpha$, then 
the following hold for any $Y\subseteq {}^{\alpha}U$ and $k<\alpha:$
\begin{enumarab}
\item  $I_k(Y)\subseteq Y,$ $\Box_k(Y)\subseteq Y,$ 

\item  If $f: \wp(^{\alpha}U)\to {}\wp(^{\beta}U)$ is defined via
$$X\mapsto \{s\in {}^{\beta}U: s\upharpoonright \alpha\in X\},$$
then $f(I_kX)=I_k(f(X))$ and $f(\Box_kX)=\Box_k(f(X))$,
for any $X\subseteq {}^{\alpha}U$.
\end{enumarab}
\end{lemma}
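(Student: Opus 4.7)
My overall strategy is to unpack both parts directly from the definitions of $I_k$ and $\Box_k$. The only substantive ingredient is the elementary observation that restriction commutes with the $k$th substitution when $k<\alpha$; everything else is bookkeeping with definitions.

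For part (1), I would first handle $I_k$. Suppose $s\in I_k(Y)$. By definition $s_k \in {\sf int}\{a\in U : s_a^k\in Y\}$. Since the interior of any set is contained in the set itself, $s_k\in \{a\in U : s_a^k\in Y\}$, i.e.\ $s_{s_k}^k \in Y$. But by construction $s_{s_k}^k = s$, so $s\in Y$. For $\Box_k$, the analogous argument goes through once we use that the Chang system under consideration is reflexive in the sense that $A\in V(x)$ implies $x\in A$ (this is the natural condition corresponding to the $\sf S4$-axiom $\Box\phi\to\phi$, satisfied in particular by the systems induced by taking open neighborhood filters of a topology). Then $s\in \Box_k(Y)$ gives $\{a\in U: s_a^k\in Y\}\in V(s_k)$, hence $s_k$ lies in that set, so $s=s_{s_k}^k\in Y$.

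For part (2), fix $X\subseteq {}^{\alpha}U$ and the map $f(X)=\{s\in {}^{\beta}U : s\restr{\alpha}\in X\}$. The key lemma I would verify first is the purely combinatorial identity
\[
(s_a^k)\restr{\alpha} \;=\; (s\restr{\alpha})_a^k \qquad \text{for all } s\in {}^{\beta}U,\ a\in U,\ k<\alpha,
\]
which follows coordinate-by-coordinate from the fact that $k<\alpha$, so that the substitution and the truncation never interfere. With this in hand I would compute, for $s\in {}^{\beta}U$,
\[
s\in f(I_k X)\iff s\restr{\alpha}\in I_k(X)\iff s_k\in {\sf int}\bigl\{a\in U : (s\restr{\alpha})_a^k\in X\bigr\},
\]
while
\[
s\in I_k(f(X))\iff s_k\in {\sf int}\bigl\{a\in U: s_a^k\in f(X)\bigr\}\iff s_k\in {\sf int}\bigl\{a\in U : (s_a^k)\restr{\alpha}\in X\bigr\}.
\]
The combinatorial identity makes the two sets inside ${\sf int}(\cdot)$ coincide, giving $f(I_kX)=I_k(f(X))$. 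The argument for $\Box_k$ is formally the same, with ${\sf int}(\cdot)$ replaced by membership in $V(s_k)$, and again reduces to the same identification of the sets $\{a:(s\restr{\alpha})_a^k\in X\}$ and $\{a:(s_a^k)\restr{\alpha}\in X\}$.

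I do not expect any genuine obstacle: both claims are essentially bookkeeping. The only point that deserves care is verifying that restriction commutes with the substitution (which fails if one had $k\geq \alpha$), and keeping track of which dimension ($\alpha$ or $\beta$) each sequence lives in when computing $s_a^k$. Thus the proof is short and mechanical, and it applies verbatim to weak spaces ${}^{\alpha}U^{(p)}$ by the same coordinate-by-coordinate verification, the only extra observation being that if $s\in {}^{\alpha}U^{(p)}$ then $s_a^k\in {}^{\alpha}U^{(p)}$ since altering a single coordinate preserves the cofiniteness condition.
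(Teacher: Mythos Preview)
The paper omits the proof entirely, stating only that it is ``very easy to prove''; your direct verification from the definitions is exactly the natural argument and is correct. In particular your key observation for part~(2), that $(s_a^k)\restr{\alpha}=(s\restr{\alpha})_a^k$ whenever $k<\alpha$, is precisely the point on which everything hinges, and you are right to flag that $\Box_k(Y)\subseteq Y$ requires the reflexivity condition $A\in V(x)\Rightarrow x\in A$ on the Chang system (which holds for the $\sf S4$ Chang systems the paper works with but is not part of the bare definition of a Chang system).
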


For an algebra $\A$ in $\sf Gs_{\alpha}$ with top element $V$ and base $U$, let $\A^{t}$ be the $\sf TPCA_{\alpha}$ 
obtained when $U$ is endowed with the some topology $\tau$, 
equivalently the Chang algebra (this will turn out to be an $S5$ Chang algebra, to be defined shortly) 
corresponding to the Chang system
$F:U\to \wp(\wp (U))$, defined via $F(x)=\tau$ $(x\in U)$.

\begin{lemma} Let $\A, \B$ be in $\sf Gs_{\alpha}$ such that $\A\subseteq \B$ 
have the same top element giving the same base $U$. If $U$ is endowed with any topology;  then 
$\A^{t}\subseteq \B^{t}$. If $\A\cong \B$, then $\A^t\cong \B^t$.
\end{lemma}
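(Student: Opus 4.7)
The plan is to exploit the fact that the interior operators $I_k$ (and, for that matter, the dual operators $\mathsf{Cl}_k$ and the Chang box operators $\Box_k$) are defined pointwise from data that belongs to the base, not to any particular subalgebra of $\wp(V)$. Explicitly, the formula
$$I_k(X) \;=\; \{s \in V : s_k \in \mathsf{int}_\tau\{a \in U : s^k_a \in X\}\}$$
involves only the set $X \subseteq V$, the top $V$, the base $U = \bigcup_{s \in V}\rng s$ and the topology $\tau$. Hence whenever two $\mathsf{Gs}_\alpha$s see the same $V$, $U$ and $\tau$, they compute the same $I_k(X)$ for every $X$ in either of them.

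With this observation the first assertion is immediate. Since $\A \subseteq \B$ in $\mathsf{Gs}_\alpha$ forces a common top element $V$ and hence a common base $U$, the stipulated topology on $U$ produces the same operator $I_k$ acting on $\wp(V)$; restricting to $A \subseteq B$ we get $I_k^{\A^t}(X) = I_k^{\B^t}(X)$ for every $X \in A$ and $k < \alpha$. Combined with the already-present agreement of $\A$ and $\B$ on the Boolean, cylindric and diagonal operations, this is exactly what is required for the inclusion $A \hookrightarrow B$ to be a $\mathsf{TPCA}_\alpha$-embedding, giving $\A^t \subseteq \B^t$. (The clause $\A^t \in \mathsf{TPCA}_\alpha$ presupposes $\A$ is closed under each $I_k$, which is the standing assumption whenever we write $\A^t$.)

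For the second assertion I would run the same argument along the isomorphism rather than along the inclusion. An isomorphism $f : \A \to \B$ of $\mathsf{Gs}_\alpha$s with the common-base convention is in particular a Boolean isomorphism between $\wp(V)$-subalgebras; since both sides compute $I_k$ by the same formula from the same $(U,\tau,V)$, one has $f(I_k^{\A^t}(X)) = I_k^{\B^t}(f(X))$ for every $X \in A$, so $f$ lifts to an isomorphism $\A^t \cong \B^t$. Lemma \ref{box} can be cited here as the conceptual package expressing that $I_k$ and $\Box_k$ are "base-local" and therefore automatically respect embeddings between set algebras that share a base.

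The only genuine obstacle is bookkeeping: one must be explicit that $\A^t$ is the expansion by operators inherited from the fixed topology (equivalently the constant Chang system $F(x) = \tau$), and that in the isomorphism clause the two algebras are meant to share base/topology (otherwise the statement would need a homeomorphism datum). Once these conventions are pinned down, the proof is really just the one-line verification above, so no substantive difficulties remain.
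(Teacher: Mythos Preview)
The paper states this lemma without proof (it is placed immediately after Lemma~\ref{box}, which is explicitly declared ``very easy to prove'' and omitted; the present lemma receives the same treatment). Your argument is exactly the intended one: the operator $I_k$ is computed from the ambient data $(V,U,\tau)$ alone, so it coincides on any two set algebras sharing that data, and in particular restricts along the inclusion $A\hookrightarrow B$. Your caveat about the isomorphism clause is appropriate; note that the lemma's opening hypothesis already places $\A,\B$ over the \emph{same} top element and base $U$ with a single topology, so the second sentence should be read under that standing assumption, which dissolves the homeomorphism issue you flag.
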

 
In cylindric algebra theory a subdirect product of set algebras is isomorphic to a generalized set algebra.
We show that this phenomena persists when the bases carry topologies; we need to describe the topology on the base 
of the resulting generalized set algebra in terms
of the topologies on the bases of the set algebras 
involved in the subdirect product.

\begin{definition} Let $\{X_i:i\in I\}$ be a family of topological spaces indexed by $I$. 
Let $X=\bigcup X_i$ be the disjoint union of the underlying sets. For each $i\in I$ let
$\phi_i: X_i\to X$ be the canonical injection. 
The {\it coproduct on $X$} is defined as the finest topology on $X$ for which the canonical injections are continuous.
\end{definition}
That is a  subset $U$ of $X$ is open in the coproduct topology on $X$ iff its preimage $\phi_i^{-1}(U)$ is open in $X_i$ for each $i\in I$ iff 
iff its intersection with $X_i$ is open relative to $X_i$ for each $i\in I$.

\begin{theorem} Let $\B$ be the $\sf Gs_{\alpha}$ with unit $V=\bigcup_{i\in I}{}^{\alpha}U_i$ where $U_i\cap U_j=\emptyset$ and base 
$\bigcup_{i\in I}U_i$ carrying a topology.  Assume that $\B$ has universe $\wp(V)$. 
Let $\A_i$ be the $\sf Cs_{\alpha}$ 
with base $U_i$, $U_i$ having the subspace topology and universe $\wp(^{\alpha}U_i)$. 
Then $f:\B\to \prod_{i\in I}\A_i$ 
defined by
$X\mapsto (X\cap {}^{\alpha}U_i:I\in I)$ is an isomorphism of cylindric algebras; 
furthermore it respects the interior operators stimulated by the topologies
on the bases.
\end{theorem}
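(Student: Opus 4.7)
The plan is to verify the statement in two stages: first that $f$ is an isomorphism of plain cylindric set algebras (the standard subdirect decomposition of a generalized set algebra), and then that it additionally commutes with each interior operator $I_k$. Throughout, write $U = \bigcup_{i \in I} U_i$ and note that pairwise disjointness of the $U_i$ forces pairwise disjointness of the components ${}^\alpha U_i$ inside $V$, so every $s \in V$ belongs to a unique ${}^\alpha U_i$.

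For the cylindric stage, injectivity of $f$ comes from $X = \bigcup_i (X \cap {}^\alpha U_i)$, and surjectivity from assembling any tuple $(Y_i)_{i \in I}$ with $Y_i \subseteq {}^\alpha U_i$ into $\bigcup_i Y_i \in \wp(V)$. Boolean operations commute with intersection with ${}^\alpha U_i$ by elementary set theory, and the diagonals ${\sf d}_{ij}$ split trivially along components. The only step that uses disjointness essentially is the cylindrifier clause: if $s \in {}^\alpha U_i$ and some $t \in X$ satisfies $t \equiv_k s$, then $t(j) = s(j) \in U_i$ for all $j \neq k$, so for $t$ to lie in $V$ disjointness forces $t(k) \in U_i$; hence $t \in X \cap {}^\alpha U_i$, giving ${\sf c}_k X \cap {}^\alpha U_i = {\sf c}_k(X \cap {}^\alpha U_i)$, with the reverse inclusion immediate.

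For the topological stage, fix $k < \alpha$, $X \subseteq V$, and $s \in {}^\alpha U_i$. By definition $s \in I_k(X)$ iff $s_k$ lies in the $U$-interior of $A(s) = \{a \in U : s_a^k \in X\}$. The same straddling argument shows that for $a \in U_j$ with $j \neq i$ the sequence $s_a^k$ has entries in two distinct components of the disjoint union, so $s_a^k \notin V$ and \emph{a fortiori} $s_a^k \notin X$; hence $A(s) = \{a \in U_i : s_a^k \in X \cap {}^\alpha U_i\} \subseteq U_i$. It then suffices to show that for any $A \subseteq U_i$ and any $x \in U_i$ the $U$-interior and the $U_i$-subspace interior of $A$ agree at $x$. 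Since $U_i$ is open (indeed clopen) in the coproduct topology on $U$ defined just before the theorem, this topological identity is immediate, and one concludes $I_k(X) \cap {}^\alpha U_i = I_k(X \cap {}^\alpha U_i)$, which is exactly the required commutation with $f$. The main obstacle is precisely this compatibility of topologies: if the $U_i$ were merely subsets and not open in $U$, the $U$-interior of a set $A \subseteq U_i$ could strictly contain its $U_i$-subspace interior and the key identity would fail. Under the coproduct interpretation intended by the statement, this is automatic, and the two stages combine to give the claimed topological cylindric isomorphism, extending the classical decomposition of a ${\sf Gs}_\alpha$ into a product of ${\sf Cs}_\alpha$'s to the topological setting.
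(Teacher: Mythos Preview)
Your proof is correct and complete. The paper actually provides no proof for this theorem; it is stated without demonstration, sandwiched between the definition of the coproduct topology and the subsequent theorem ${\bf SP}{\sf TCs}_{\alpha}\subseteq {\sf TGs}_{\alpha}$ (which invokes the present result together with the coproduct topology on the base).

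Your write-up is in fact more careful than the paper's statement warrants. You rightly isolate the one nontrivial point: for $s\in{}^{\alpha}U_i$ the set $A(s)=\{a\in U: s_a^k\in X\}$ is contained in $U_i$, so the question reduces to whether ${\sf int}_U(A)$ and ${\sf int}_{U_i}(A)$ agree at points of $U_i$ for $A\subseteq U_i$. As you observe, this requires $U_i$ to be open in $U$; under an arbitrary topology on $U$ it can fail (e.g.\ $U=\mathbb{R}$, $U_1=\mathbb{Q}$, $U_2=\mathbb{R}\setminus\mathbb{Q}$ already breaks the identity for $\alpha=1$). The paper's statement says only that $U$ ``carries a topology'' and that each $U_i$ has ``the subspace topology,'' but the surrounding context (the coproduct topology is introduced immediately before and used immediately after) makes clear that the intended hypothesis is that each $U_i$ is open in $U$, which is exactly what you invoke. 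So your reading and your resolution are both the right ones.
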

${\sf TCs}_{\alpha}({\sf TGs}_{\alpha})$ 
denotes the class of topological (generalized) set algebras.

\begin{theorem} ${\bf SP}{\sf TCs}_{\alpha}\subseteq {\sf TGs}_{\alpha}.$
\end{theorem}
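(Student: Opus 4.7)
The plan is to reduce to the preceding theorem by disjointifying bases and putting the right topology on their union. Take $\A \in \mathbf{SP}\,\mathsf{TCs}_\alpha$, so by definition there is an embedding $\A \hookrightarrow \prod_{i \in I}\A_i$ where each $\A_i \in \mathsf{TCs}_\alpha$ has base $U_i$ equipped with some topology $\tau_i$.

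First I would disjointify. Replace each $U_i$ by $U_i' := \{i\}\times U_i$ and push forward the set-algebra structure and the topology $\tau_i$ through the tagging bijection $u \mapsto (i,u)$, obtaining $\A_i' \in \mathsf{TCs}_\alpha$ isomorphic to $\A_i$. Because the interior operator $I_k$ on a $\mathsf{TCs}_\alpha$ is defined pointwise from the base topology, this is an isomorphism of topological cylindric set algebras, and the bases $U_i'$ are now pairwise disjoint.

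Second, put the coproduct topology on $U := \bigsqcup_{i \in I} U_i'$, i.e.\ declare $W \subseteq U$ open iff $W \cap U_i'$ is open in $(U_i',\tau_i')$ for every $i$. This makes each $U_i'$ clopen in $U$, and the subspace topology on $U_i'$ inherited from $U$ is exactly $\tau_i'$. Form the generalized space $V := \bigcup_{i \in I}{}^\alpha U_i'$ and let $\B^t$ be the full $\mathsf{TGs}_\alpha$ with top element $V$, universe $\wp(V)$, and base topology as above.

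Third, invoke the preceding theorem applied to $\B^t$. It produces a cylindric algebra isomorphism
\[
f : \B^t \longrightarrow \prod_{i \in I}\mathfrak{B}_i^t, \qquad X \longmapsto (X \cap {}^\alpha U_i')_{i \in I},
\]
where $\mathfrak{B}_i^t$ is the full $\mathsf{TCs}_\alpha$ on base $U_i'$ with the subspace topology, and by our choice of coproduct topology this subspace topology is exactly $\tau_i'$. Moreover $f$ respects the interior operators induced by the base topologies, so it is an isomorphism of topological cylindric algebras. Since each $\A_i'$ is a topological subalgebra of $\mathfrak{B}_i^t$, the product $\prod_i \A_i'$ is a topological subalgebra of $\prod_i \mathfrak{B}_i^t$, and pulling back through $f^{-1}$ gives an embedding $\prod_i \A_i' \hookrightarrow \B^t$ of topological cylindric algebras. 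Composing with the original embedding and the disjointification isomorphism yields $\A \hookrightarrow \B^t$, exhibiting $\A$ as a subalgebra of a $\mathsf{TGs}_\alpha$, hence $\A \in \mathsf{TGs}_\alpha$.

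The only delicate point is the bookkeeping for the topologies: one must check that the coproduct topology is the unique choice for which the subspace topologies on the $U_i'$ return the original $\tau_i'$, and that the interior operators computed in $\B^t$ agree, under $f$, with those computed componentwise in each $\mathfrak{B}_i^t$. Both facts are immediate from the definition of the coproduct topology and from the pointwise definition of $I_k$ (the clopenness of $U_i'$ in $U$ means that for $s\in {}^\alpha U_i'$ the set $\{a\in U: s^k_a \in X\}$ has interior in $U$ equal to its interior in $U_i'$), so no additional work is needed beyond the preceding theorem.
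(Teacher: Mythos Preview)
Your argument is correct and follows essentially the same route as the paper: disjointify the bases, equip their union with the coproduct topology, and apply the inverse of the isomorphism $f$ from the preceding theorem to embed the subdirect product into the full $\mathsf{TGs}_\alpha$ on $V=\bigcup_i{}^\alpha U_i'$. The paper simply assumes the $U_i$ are pairwise disjoint and writes the one-line conclusion, whereas you spell out the tagging and the topology bookkeeping explicitly; these extra details are sound and add nothing beyond what the paper leaves implicit.
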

\begin{proof} Suppose that $\C\subseteq \prod_{i\in I}\D_i$ each $\D_i\in {\sf TCs}_{\alpha}$ 
with base $U_i\neq 0$, and $U_i\cap U_j=\emptyset$. Each $U_i$ has a topology.
Let $f$ be as in the previous theorem. 
Then $f^{-1}\upharpoonright \C$ is an isomorphism into a $\sf TGs_{\alpha}$ 
whose base $\bigcup_{i\in I}U_i$ carry the coproduct 
topology.
\end{proof}

Now such algebras lend itself to an abstract formulation aiming to capture the concrete set algebras; or rather the variety 
generated by them.

This consists of expanding the signature of cylindric algebras 
by unary operators,  or modalities, one for each $k<\alpha$, satisfying certain identities.

The axiomatizations we give are actually simpler than  those stipulated by Georgescu in \cite{g, g2}, 
although  locally finite polyadic algebras and locally finite cylindric algebras are  equivalent.
We use only substitutions corresponding to replacements; in the case of dimension complemented algebras
all substitutions corresponding to finite transformations are term 
definable from these \cite{HMT1}. This makes axiom $(A8)$ on p.1 of 
\cite{g} superfluous.

In \cite{g, g2}  representation theorems are proved  for locally finite polyadic algebras; 
here we extend this theorem in three ways. We prove a strong representation theorem for {\it dimension complemented algebras}
this is a strictly larger class.  The logic corresponding to such algebras allow infinitary predicates.
We prove an interpolation and an omitting types for such logics, too.
The constructions used are standard Henkin constructions; for luckily the `expanded' 
semantics allows such 
proofs.

We start with the standard definition of cylindric algebras \cite[Definition 1.1.1]{HMT1}:

\begin{definition}
Let $\alpha$ be an ordinal. A {\it cylindric algebra of dimension $\alpha$}, a  $\CA_{\alpha}$ for short, 
is defined to be an algebra
$$\C=\langle C, +, \cdot, -, 0, 1, \cyl{i}, {\sf d}_{ij}  \rangle_{i,j\in \alpha}$$
obeying the following axioms for every $x,y\in C$, $i,j,k<\alpha$

\begin{enumerate}

\item The equations defining Boolean algebras,

\item $\cyl{i}0=0,$

\item $x\leq \cyl{i}x,$

\item $\cyl{i}(x\cdot \cyl{i}y)=\cyl{i}x\cdot \cyl{i}y,$

\item $\cyl{i}\cyl{j}x=\cyl{j}\cyl{i}x,$

\item ${\sf d}_{ii}=1,$

\item if $k\neq i,j$ then ${\sf d}_{ij}=\cyl{k}({\sf d}_{ik}\cdot {\sf d}_{jk}),$

\item If $i\neq j$, then ${\sf c}_i({\sf d}_{ij}\cdot x)\cdot {\sf c}_i({\sf d}_{ij}\cdot -x)=0.$

\end{enumerate}
\end{definition}
For a cylindric algebra $\A$, we set ${\sf q}_ix=-{\sf c}_i-x$ and ${\sf s}_i^j(x)={\sf c}_i({\sf d}_{ij}\cdot x)$.
Now we want to abstract equationally the prominent features of the concrete interior operators defined on cylindric 
set and weak set algebras.
We expand the signature of $\CA_{\alpha}$ 
by a unary operation $I_i$ 
for each $i\in \alpha.$ In what follows $\oplus$ denotes the operation of symmetric difference, that is, 
$a\oplus b=(\neg a+b)\cdot (\neg b+a)$.
For $\A\in \CA_{\alpha}$ and $p\in \A$, $\Delta p$, {\it the dimension set of $p$}, is defined to be the set 
$\{i\in \alpha: {\sf c}_ip\neq p\}.$ In polyadic terminology $\Delta p$ is called {\it the support of $p$}, and if $i\in \Delta p$, then $i$ 
is said to {\it  support $p$} \cite{g,g2}.

\begin{definition}\label{topology} A {\it topological cylindric algebra of dimension $\alpha$}, $\alpha$ an ordinal,
is an algebra of the form $(\A,I_i)_{i<\alpha}$ where $\A\in \sf CA_{\alpha}$ 
and for each $i<\alpha$, $I_i$ is a unary operation on 
$A$ called an {\it  interior operator} satisfying the following equations for all $p, q\in A$ and $i, j\in \alpha$:
\begin{enumerate}
\item ${\sf q}_i(p\oplus q)\leq {\sf q}_i (I_ip\oplus I_iq),$
\item $I_ip\leq p,$
\item $I_ip\cdot I_ip=I_i(p\cdot q),$
\item $p\leq I_iI_ip,$
\item $I_i1=1,$
\item ${\sf c}_kI_ip=I_ip, k\neq i, k\notin \Delta p,$
\item ${\sf s}_j^iI_ip=I_j{\sf s}_j^ip, j\notin \Delta p.$ 
\end{enumerate}
\end{definition}
The class of all such topological cylindric  
algebras are denoted by ${\sf TCA}_{\alpha}.$ 

We do the same task axiomatizing the properties of Chang's modal operators, or boxes, equationally.

\begin{definition} A {\it Chang cylindric algebra of dimension $\alpha$}, $\alpha$ an ordinal,  
is an algebra of the form $(\A, \Box_i)_{i\in \alpha}$ where $\A\in \CA_{\alpha}$
and for each $i<\alpha$, $\Box_i$ is a unary operator on $\A$, called a {\it modality}, 
satisfying the following equations for all $p,q\in A$ and $i,j\in \alpha$.
\begin{enumerate}
\item ${\sf q}_i(p\oplus q)\leq {\sf q}_i (\Box_ip\oplus \Box_iq),$
\item ${\sf s}_j^i\Box_ip=\Box_j{\sf s}_j^ip, j\notin \Delta p.$ 
\end{enumerate}
\end{definition}
Consider the following equations expressible in the 
signature of Chang algebras of dimension $\alpha$; where $i\in \alpha$:
\begin{enumerate}
\item $\Box_i1=1,$
\item $\Box_ip\leq p,$
\item $\Box_ip\cdot \Box_ip=\Box_i(p\cdot q),$
\item ${\sf c}_k\Box_ip=\Box_ip, k\neq i, k\notin \Delta p,$
\item $\Box_ip\leq \Box_i\Box_ip,$
\item $\neg \Box_i\neg p\leq \Box_i\neg \Box_i\neg p.$
\end{enumerate}
The {\it $\sf S4$ Chang algebras of dimension $\alpha$} are defined as the Chang algebras of dimension $\alpha$ 
with properties equivalent to items $(1)-(5)$ and the {\it $S5$ Chang algebras of dimension $\alpha$} are the 
Chang cylindric algebras of dimension $\alpha$ satisfying items $(1)-(6)$. 
Notice that the $\sf S4$ Chang algebras are equivalent to the topological cylindric algebras of the same
dimension.
For $\B=(\A, I_i)_{i<\alpha}\in {\sf TCA}_{\alpha}$ we write
$\Rd_{ca}\B$ for $\A$. Notice too that every $\CA_{\alpha}$ can be extended to a 
$\sf TCA_{\alpha}$, by defining for all $i<\alpha$,
$I_i$ to be the identity function. 

Topological algebras  in the form we defined are {\it not } 
Boolean algebras with operators because the interior operators do not distribute
over the Boolean join. 

But we could have just as well worked with the {\it dual operators}, in which case
we land in the realm of Boolean algebras with operators. From the point of view of multi modal logic such 
operators  are the diamonds and the interior operators are the boxes.

But in all cases algebras dealt with are {\it not completely additive}; 
cylindrifiers are completely additive but the interior operators 
are not as shown next.
\begin{example}

Let $\A= \wp(^{\omega}\mathbb{N})$ with the co-finite topology on $\mathbb{N}$. 
Let $X_n=\{n\}\times {}^{\omega}\mathbb{N}$. Then 
$$I_0X_n=\emptyset,$$ 
and so $$\bigcup I_0X_n=\emptyset$$
But $$\bigcup_{n\in \omega}X_n=\mathbb{N}$$ 
hence
$$I_0(\bigcup X_n)=\mathbb{N}\neq \bigcup I_0X_n.$$

\end{example}

Second observation is that the interior operators are not term definable, for if $U$ is an infinite set and $\A$ is the full set algebra
with base $U$ of dimension $\alpha>1$, then if $U$ has the discrete topology and $i<\alpha$, then $I_iX=X$ for any $X\in \A$, which is not the case
when $U$ has the indiscrete topology. 
In other words the cylindric structure does not uniquely define the interior operators.

We do not know whether one can construct a set algebra with base $U$ and two non-homeomorphic topologies on $U$ such that
the induced interior operators gives rise to isomorphic 
topological cylindric set algebras.

If $\A$ is a set algebra with base $U$, this  is concretely reflected by giving $U$ the 
discrete topology.  Viewed otherwise, if $\A$ is in $\sf RCA_{\alpha}$, then this expansion is also representable by giving the base $U$
of the $\sf Gs_{\alpha}$ representing $\A$
the discrete topology. This simple observation will turn out immensely 
useful to obtain results about $\TCA_{\alpha}$ by bouncing it back to the cylindric part. 
This works in the case of transferring {\it negative results} for cylindric algebras to the topological paradigm, 
but does not help much in case we are encountered with a 
positive result. For example as we shall see, though the equational theory of $\sf RCA_2$ is known to be decidable, 
it will turn out that the equational theory of the class of representable topological cylindric algebras of dimension $2$ is not.

Such an  observation also holds for $S5$ Chang algebras, too because a 
discrete space is obviously almost discrete.


{\it We stipulate that each and every result, with no single exception, proved for $\sf TCA_{\alpha}$ can be obtained using the same methods for 
Chang algebras, $\sf S4$ Chang algebras and $S5$ Chang algebras.}

An algebra $\B$ is {\it locally finite (dimension complemented)}
if $\Rd_{ca}\B$ is such. 

We denote by $\sf TLf_{\alpha}$ and $\sf TDc_{\alpha},$ the classes of {\it locally finite and dimension complemented cylindric topological 
algebras of dimension $\alpha$},
respectively. That is, $\B\in \sf TDc_{\alpha},$ if $\Delta x\neq \alpha$ 
for every $x\in B$; this turns out, in the infinite dimensional case, equivalent to $\alpha\sim \Delta x$ is infinite for every
$x\in B$. On the other hand, 
$\B\in \sf TLf_{\alpha}$ if $\Delta x$ is finite for all $x\in B$ (recall that $\Delta x=\{i\in \alpha: {\sf c}_ix\neq x\}$).
For finite dimension obviously every algebra is locally finite.

We also need the notion of {\it compressing} dimensions 
and, dually,  {\it dilating them}; expressed by the notion of neat reducts.

\begin{definition} 
\begin{enumarab}
\item Let $\alpha<\beta$ be ordinals and $\B\in \sf TCA_{\beta}$. Then $\Nr_{\alpha}\B$ is the algebra with universe 
$Nr_{\alpha}\A=\{a\in \A: \Delta a\subseteq \alpha\}$ and operations obtained by discarding the operations of $\B$ 
indexed by ordinals in $\beta\sim \alpha$. 
$\Nr_{\alpha}\B$ is called the {\it neat $\alpha$ reduct of $\B$}. If $\A\subseteq \Nr_{\alpha}\B$, with $\B\in \sf TCA_{\beta}$,
then we say that $\B$ is  {\it a $\beta$
dilation of $\A$}, or simply {\it a dilation} of $\A$.
\item An {\it injective} homomorphism $f:\A\to \Nr_{\alpha}\B$ is called a {\it neat embedding}; if such an $f$ exists, then we say
that $\A$ {\it neatly embeds into its dilation $\B$}. In particular, if $\A\subseteq \Nr_{\alpha}\B,$ then $\A$ neatly embeds into
$\B$ via the inclusion map. 
\end{enumarab}
\end{definition}
Note that the algebra $\Nr_{\alpha}\B$ is well defined; it is closed under the cylindric operations; this is well 
known and indeed easy to show, and it also closed under all the {\it interior operators} $I_i$ for $i<\alpha$, for
if $x\in \Nr_{\alpha}\B$, and $k\in \beta\sim \alpha$, then by axiom (6) of definition \ref{topology}, 
$k\notin \alpha\supseteq \Delta x\cup \{i\}\supseteq \Delta(I_i(x))$, 
hence ${\sf c}_k(I_i(x))=I_i(x).$

A piece of {\it notation} used throughout. If $\A$ is an algebra and $X\subseteq \A$, then $\Sg^{\A}X$ denotes the subalgebra
of $\A$ generated by $X$.

\begin{theorem}\label{dilate} Let $\alpha\geq \omega$. If $\A \in {\sf TDc}_{\alpha}$ and $\beta>\alpha$, then there exists
$\B\in \sf TCA_{\beta}$ such that $\A\subseteq \Nr_{\alpha}\B$ and for all $X\subseteq \A$, 
$\Sg^{\A}X=\Nr_{\alpha}\Sg^{\B}X$.
\end{theorem}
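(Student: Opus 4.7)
The plan is to reduce the problem to the known cylindric-algebra dilation theorem and then to build the interior operators on top of the resulting $\CA_\beta$. \emph{Stage 1.} Apply the classical Henkin--Monk--Tarski neat embedding theorem for dimension complemented cylindric algebras (the $\sf TCA$-free analogue of the statement at hand, as in \cite[Thm.~2.6.49]{HMT2}) to the reduct $\Rd_{ca}\A\in{\sf Dc}_\alpha$. This yields a $\C\in\CA_\beta$, itself dimension complemented, with an inclusion $\Rd_{ca}\A\subseteq\Nr_\alpha\C$ and satisfying $\Sg^{\Rd_{ca}\A}X=\Nr_\alpha\Sg^\C X$ for every $X\subseteq A$; concretely $\C$ can be taken as a suitable quotient of a free $\CA_\beta$ generated by $|A|$-many symbols, the quotient identifying the $\CA$-structure of $\A$ on the $\alpha$-supported elements.

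\emph{Stage 2.} Define interior operators $J_k$ on $\C$ for every $k<\beta$, extending the given $I_k$ for $k<\alpha$. Given $c\in C$ and $k<\beta$, use the dimension complementedness of $\C$ to pick an index $i\in\alpha\setminus(\Delta c\cup\{k\})$ and set
$$J_kc\;:=\;{\sf s}_i^k\,I_i\,{\sf s}_k^i c.$$
When $c\in A$ the inner term ${\sf s}_k^i c$ remains in $A$ (its dimension set stays inside $\alpha$), so the middle $I_i$ is read as the original interior operator on $\A$; in general one uses that ${\sf s}_k^i c\in\Nr_\alpha\C$ already when $c$ has $\Delta c\subseteq\alpha$, and one dilates $I_i$ to all of $C$ by the same recipe. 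By repeatedly applying axiom~(7) of Definition~\ref{topology} in $\A$ together with the standard $\CA$-identity ${\sf s}_i^k{\sf s}_k^i c=c$ (valid whenever $i\notin\Delta c$) one checks that $J_kc$ is independent of the choice of $i$, and that $J_kc=I_kc$ whenever $k<\alpha$ and $c\in A$.

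\emph{Stage 3.} Set $\B=(\C,J_k)_{k<\beta}$ and verify the seven axioms of Definition~\ref{topology}: (2)--(5) transfer from $\A$ through the ${\sf s}$-sandwich and the standard $\CA$-calculus for replacements acting on disjoint indices, axiom~(6) reduces to the dimension-set estimate $\Delta(J_kc)\subseteq(\Delta c\setminus\{i\})\cup\{k\}$, and axiom~(7) is essentially built into the definition of $J_k$. Preservation of generated subalgebras, $\Sg^\A X=\Nr_\alpha\Sg^\B X$, is then inherited from Stage~1 once one observes that every element of $\Nr_\alpha\Sg^\B X$, having dimension set inside $\alpha$, can be rewritten as a $\sf TCA$-term in $X$ involving only interior operators $I_i$ with $i<\alpha$ (by pushing the defining expansion $J_k={\sf s}_i^k I_i {\sf s}_k^i$ inward and absorbing the outer substitutions via the $\Delta\subseteq\alpha$ hypothesis), whence it lies in $\Sg^\A X$. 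The main obstacle is the well-definedness of the $J_k$'s and the verification of axiom~(7) for indices $k\in\beta\setminus\alpha$: each reduces to commuting several substitutions through interior operators, which is handled by repeatedly invoking dimension complementedness of $\C$ to find auxiliary $\alpha$-indices that are simultaneously fresh with respect to all elements in sight, and then applying axiom~(7) of $\A$ to pull the interior operator through.
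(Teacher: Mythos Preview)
Your proposal is correct and matches the paper's approach: the paper's entire proof is the single sentence ``Exactly like the proof in \cite[Theorem 2.6.49]{HMT1} defining the interior operators the obvious way,'' and your substitution recipe $J_kc={\sf s}_i^k\,I_i\,{\sf s}_k^i c$ (with $i$ fresh) is precisely that obvious way. Two small remarks: the citation should be \cite{HMT1} rather than \cite{HMT2} (Theorem~2.6.49 is in Part~I), and in Stage~2 the phrase ``dimension complementedness of $\C$'' only guarantees a fresh index in $\beta$, not in $\alpha$---what you actually need is the finer structural fact, built into the HMT construction, that every $c\in C$ is reached from a single $a\in A$ via finitely many substitutions, so that $\alpha\setminus\Delta^{\C}c$ stays infinite; this is exactly why redoing the HMT argument with the interior operators carried along (rather than grafting them on afterwards) is the cleanest route, and is presumably what the paper has in mind.
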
 
\begin{proof} Exactly like the proof in \cite[Theorem 2.6.49]{HMT1} defining the interior operators the obvious way. 
\end{proof}

Recall that for a class $\sf K$, ${\bf S}$ stands for the operation of forming subalgebras of $\sf K$, and ${\bf P}\sf K$ 
that of forming direct products.

\begin{definition}
Let $\delta$ be a cardinal. Let $\alpha$ be an ordinal.
Let$_{\alpha} \Fr_{\delta}$ be the absolutely free algebra on $\delta$
generators and of type $\TCA_{\alpha}.$ For an algebra $\A,$ we write
$R\in \Co\A$ if $R$ is a congruence relation on $\A.$ Let $\rho\in
{}^{\delta}\wp(\alpha)$. Let $L$ be a class having the same
signature as $\TCA_{\alpha}.$ Let
$$Cr_{\delta}^{(\rho)}L=\bigcap\{R: R\in {\sf Co}_{\alpha}\Fr_{\delta},
{}_{\alpha}\Fr_{\delta}/R\in \mathbf{SP}L, {\mathsf
c}_k^{_{\alpha}\Fr_{\delta}}{\eta}/R=\eta/R \text { for each }$$
$$\eta<\delta \text
{ and each }k\in \alpha\smallsetminus \rho(\eta)\}$$ and
$$\Fr_{\delta}^{\rho}L={}_{\alpha}\Fr_{\beta}/Cr_{\delta}^{(\rho)}L.$$
\end{definition}
The ordinal $\alpha$ does not figure out in $Cr_{\delta}^{(\rho)}L$
and $\Fr_{\delta}^{(\rho)}L$ though it is involved in their
definition. However, $\alpha$ will be clear from context so that no
confusion is likely to ensue.

\begin{definition} Assume that $\delta$ is a cardinal, $L\subseteq \TCA_{\alpha}$, $\A\in L$,
$x=\langle x_{\eta}:\eta<\beta\rangle\in {}^{\delta}A$ and $\rho\in
{}^{\delta}\wp(\alpha)$. We say that the sequence $x$ {\it $L$-freely
generates $\A$ under the dimension restricting function $\rho$}, or
simply $x$ freely generates $\A$ under $\rho,$ if the following two
conditions hold:
\begin{enumroman}
\item $\A=\Sg^{\A}\rng x$ and $\Delta^{\A} x_{\eta}\subseteq \rho(\eta)$ for all $\eta<\delta$.
\item Whenever $\B\in L$, $y=\langle y_{\eta}, \eta<\delta\rangle\in
{}^{\delta}\B$ and $\Delta^{\B}y_{\eta}\subseteq \rho(\eta)$ for
every $\eta<\delta$, then there is a unique homomorphism $h$ from
$\A$ to $\B$ such that $h\circ x=y$.
\end{enumroman}
\end{definition}

It can be proved without much difficulty that in the above characterization the existence of a unique homomorphism $h$ from $\A$ to $\B$ such
that $h\circ x=y$ can be replaced by the existence of a unique {\it injective} homomorphism $h$ from $\A$ to $\B$ such
that $h\circ x=y$.

\begin{lemma}\label{prep} Let $\alpha\geq \omega$ and let $\rho:\mu\to \wp(\alpha)$ such that $\Fr_{\mu}^{\rho}\TCA_{\alpha}\in \sf TDc_{\alpha}$
Then for any ordinal $\beta>\alpha$,
the sequence $x=\langle \eta/Cr_{\mu}^{\rho}\MA_{\beta}:
\eta<\mu\rangle$ $\TCA_{\alpha}$ - freely generates
$\Nr_{\alpha}\Fr_{\mu}^{\rho}(\TCA_{\beta})$ under $\rho$.
\end{lemma}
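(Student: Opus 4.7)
The plan is to adapt the classical theorem on neat embeddings of free cylindric algebras into higher-dimensional ones (cf.\ the analogous cylindric result that for dimension complemented $\A$, the neat $\alpha$-reduct of $\Fr_\mu^\rho\CA_\beta$ is $\Fr_\mu^\rho\CA_\alpha$) to the topological signature. Write $\F_\alpha=\Fr_\mu^\rho\TCA_\alpha$ and $\F_\beta=\Fr_\mu^\rho\TCA_\beta$, with canonical free generators $g_\eta=\eta/Cr_\mu^\rho\TCA_\alpha$ and $x_\eta=\eta/Cr_\mu^\rho\TCA_\beta$. The very definition of $Cr_\mu^\rho$ secures $\Delta g_\eta,\Delta x_\eta\subseteq\rho(\eta)$, so in particular $x_\eta\in\Nr_\alpha\F_\beta$ and the sequence $x$ respects the dimension restrictions.

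Since the hypothesis gives $\F_\alpha\in{\sf TDc}_\alpha$, Theorem~\ref{dilate} produces a dilation $\D\in\TCA_\beta$ with $\F_\alpha\subseteq\Nr_\alpha\D$ and $\Nr_\alpha\Sg^\D\{g_\eta:\eta<\mu\}=\F_\alpha$. Let $\D_0=\Sg^\D\{g_\eta:\eta<\mu\}$. Freeness of $\F_\beta$ in $\TCA_\beta$ under $\rho$ supplies a surjective homomorphism $\pi\colon\F_\beta\to\D_0$ sending $x_\eta\mapsto g_\eta$, and freeness of $\F_\alpha$ in $\TCA_\alpha$, applied to the target $\Nr_\alpha\F_\beta$, supplies $\psi\colon\F_\alpha\to\Nr_\alpha\F_\beta$ sending $g_\eta\mapsto x_\eta$. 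Since $\pi\circ\psi$ fixes each generator of $\F_\alpha$, the uniqueness clause in the freeness of $\F_\alpha$ forces $\pi\circ\psi=\mathrm{id}_{\F_\alpha}$, so $\psi$ is injective and $\pi$ is a retraction onto $\psi(\F_\alpha)$.

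The crux of the proof is to invert $\pi$ entirely. This relies on the universal extension property packaged into the construction of Theorem~\ref{dilate}: any $\TCA_\alpha$-homomorphism $h\colon\F_\alpha\to\Nr_\alpha\C$ with $\C\in\TCA_\beta$ lifts uniquely to a $\TCA_\beta$-homomorphism $h^\sharp\colon\D\to\C$. Taking $\C=\F_\beta$ and $h=\psi$ (viewed into $\Nr_\alpha\F_\beta\subseteq\F_\beta$) produces $\psi^\sharp\colon\D\to\F_\beta$ whose restriction to $\D_0$ inverts $\pi$ on the generators and therefore everywhere. Consequently $\pi$ is an isomorphism and $\Nr_\alpha\F_\beta\cong\Nr_\alpha\D_0=\F_\alpha$, with $x_\eta$ matching $g_\eta$; in particular $\Nr_\alpha\F_\beta$ is $\TCA_\alpha$-generated by $\{x_\eta:\eta<\mu\}$, which immediately delivers the uniqueness half of the freeness assertion.

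For existence, given $\B\in\TCA_\alpha$ and $y=\langle y_\eta\rangle\in{}^\mu\B$ with $\Delta y_\eta\subseteq\rho(\eta)$, set $\B_0=\Sg^\B\{y_\eta:\eta<\mu\}$. Because $\B_0$ is a homomorphic image of $\F_\alpha$ and homomorphisms never enlarge dimension sets, $\B_0\in{\sf TDc}_\alpha$; Theorem~\ref{dilate} then yields $\B^{+}\in\TCA_\beta$ with $\B_0\subseteq\Nr_\alpha\B^{+}$. Freeness of $\F_\beta$ delivers $\phi\colon\F_\beta\to\B^{+}$ with $\phi(x_\eta)=y_\eta$, and restricting to $\Nr_\alpha\F_\beta$ lands in $\Sg^{\Nr_\alpha\B^{+}}\{y_\eta:\eta<\mu\}\subseteq\B_0\subseteq\B$, using the generation secured in the previous paragraph. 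The principal obstacle in this plan is precisely the extension property of the dilation $\D$ that powers the injectivity of $\pi$; although it is natural and standard, it is not isolated in the statement of Theorem~\ref{dilate} and has to be read off the specific construction used there (the topological operators $I_i$ being handled just like the cylindrifiers, since axioms (6)--(7) of Definition~\ref{topology} provide exactly the interaction between $I_i$ and the extra dimensions needed for the lifting to be well-defined).
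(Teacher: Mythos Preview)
Your existence argument in the final paragraph is essentially the paper's entire proof: dilate the target $\B_0$ to some $\B^{+}\in\TCA_\beta$, invoke freeness of $\F_\beta$ to get $\phi:\F_\beta\to\B^{+}$, and restrict to $\Nr_\alpha$. The paper does exactly this, writing $\C'$ for your $\B^{+}$ and $\D$ for $\F_\beta$.

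Where you diverge is in how generation is secured. The paper never builds your maps $\psi,\pi,\psi^\sharp$ nor shows $\F_\beta\cong\D_0$. Instead it argues in one line that $\Sg^{\Nr_\alpha\D}\rng x=\Nr_\alpha\Sg^{\D}\rng x=\Nr_\alpha\D$, which is just the $NS$ property applied to the $\TDc_\alpha$-algebra $\Sg^{\Nr_\alpha\D}\rng x$ sitting inside $\Nr_\alpha\D$; this algebra is dimension complemented because it is a homomorphic image of $\F_\alpha\in\TDc_\alpha$ under the dimension-restricting generators, and homomorphisms do not enlarge dimension sets. So the paper gets generation for free from the $NS$ clause already packaged in Theorem~\ref{dilate}, without any lifting property.

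Your route through the isomorphism $\pi:\F_\beta\to\D_0$ is correct and yields the stronger byproduct that $\F_\beta$ is \emph{itself} a dilation of $\F_\alpha$, but it costs you the extra hypothesis you flag at the end: the universal extension property of the specific dilation. That property is real (it is essentially $UNEP$ combined with $NS$, and is indeed how the HMT construction behaves), but since the paper has $NS$ available directly it bypasses the detour entirely.
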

\begin{proof}
Let $\C\in \TCA_{\alpha}$ and let $y:\mu\to \C$ be a homomorphism
such that $\Delta y_{\eta}\subseteq \rho \eta$ for all $\eta<\mu$.  Then we can assume that $\rng y$
generates $\C$, so that $\C\in \Dc_{\alpha}$, hence $\C\in \Nr_{\alpha}\TCA_{\beta}$.
Accordingly, let $\C'\in \TCA_{\beta}$ be such that $\C=\Nr_{\alpha}\C'$. Then clearly $y\in
{}^{\mu}\C'$ and $\Delta y_{\eta}\subseteq \alpha$ for all
$\eta<\mu$. Let $\D=\Fr_{\mu}^{\rho}(\TCA_{\beta})$. Then by freeness
there exists  a homomorphism $h$ from $\D$ to $\C'$
such that $h\circ x=y$. Clearly $h$ is a homomorphism from $\Rd_{\alpha}\D$ to $\Rd_{\alpha}\C'$,
hence it is a homomorphism from $\Sg^{\Rd_{\alpha}\D}\rng x$ to $\Sg^{\Rd_{\alpha}\C'}h(\rng(x))$. Since
$\rng x\subseteq \Nr_{\alpha}\D$, we have $h$ is a homomorphism from
$\Sg^{\Nr_{\alpha}\D}\rng x=\Nr_{\alpha}\Sg^{\D}\rng x$
to $\C$, such that $h(\eta/Cr_{\mu}^{\rho}\TCA_{\beta})= a_{\eta}$ and we are done.
In particular, we have
$\Nr_{\alpha}\Fr_{\mu}^{\rho}\TCA_{\beta}\cong \Fr_{\mu}^{\rho}(\TCA_{\alpha}).$
\end{proof}

\section{Completeness, Interpolation and Omitting types}

In this section $\alpha$ will be an infinite ordinal. 
To prove our first (completeness) theorem, we formulate and prove several lemmas. Properties of substitutions reported in \cite{HMT1} 
are freely used. For example, for every $\A\in \sf TDc_{\alpha}$ and every finite transformation $\tau$ we have a unary operation
${\sf s}_{\tau}$ that happens to be a Boolean endomorphism on $\A$ \cite[Theorem 1.11.11]{HMT1}.

\begin{lemma}\label{diagonal} 
\begin{enumarab}
\item Let $\C\in \CA_{\alpha}$ and let $F$ be a Boolean filter on $\C$.
Define the relation $E$ on $\alpha$ by $(i,j)\in E$ if and only if
${\sf d}_{ij}\in F$. Then $E$ is an equivalence relation on $\alpha$.
\item Let $\C\in \CA_{\alpha}$ and $F$ be a Boolean filter of $\C$. Let $V=\{\tau\in {}^{\alpha}\alpha: |\{i\in \alpha: \tau(i)\neq i\}|<\omega\}$.
For $\sigma, \tau\in V$,  write 
$$\sigma\equiv_E\tau\textrm {  iff } 
(\forall i\in \alpha) (\sigma (i),\tau(i))\in E.$$ 
and let 
$$\bar{E}=\{(\sigma,\tau)\in {}^2V: \sigma\equiv_E \tau\}.$$
Then $\bar{E}$ is an euiqvalence relation on $V$. Let $W= V/\bar{E}.$ 
For $h\in W,$ write $h=\tau/\bar{E}$ for $\tau\in V$ such that
$\tau(j)/E=h(j)$ for all $j\in \alpha$. 
Let 
$f(x)=\{ \bar{\tau} \in W: {\sf s}_{\tau}x\in F\}.$  
Then $f$ is well defined. 

Furthemore, $W$ can be identified with the weak space $^{\alpha}[U/E]^{(\bar{p})}$ where $\bar{p}=(p(i)/E: i <\alpha)$ 
via $\tau/\bar{E}\mapsto [\tau]$, 
where $[\tau](i)=\tau(i)/E.$ Accordingly, we write $W={}^{\alpha}[U/E]^{(\bar{p})}.$ 
\end{enumarab}
\end{lemma}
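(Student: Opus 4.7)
The plan is to handle the three claims in the order they appear: first $E$ is an equivalence relation on $\alpha$, then $\bar E$ is an equivalence relation on $V$, then $f$ is well defined, and finally the bijective identification with the weak space.

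For part (1), reflexivity is immediate from axiom (6), ${\sf d}_{ii}=1 \in F$. For symmetry, the standard calculation ${\sf d}_{ij}={\sf c}_k({\sf d}_{ik}\cdot{\sf d}_{jk})={\sf c}_k({\sf d}_{jk}\cdot{\sf d}_{ik})={\sf d}_{ji}$ (picking any $k\notin\{i,j\}$, which exists since $\alpha$ is infinite) gives ${\sf d}_{ij}={\sf d}_{ji}$, so $(i,j)\in E \Longleftrightarrow (j,i)\in E$. For transitivity, assume $(i,j), (j,k)\in E$ with $i\neq k$; picking $l\notin\{i,j,k\}$, axiom (7) gives ${\sf d}_{ik}={\sf c}_l({\sf d}_{il}\cdot{\sf d}_{kl})$. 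A similar maneuver, combined with axiom (7) applied so that $j$ plays the role of the eliminable index, yields ${\sf d}_{ij}\cdot{\sf d}_{jk}\leq {\sf d}_{ik}$ (this is \cite[Theorem 1.3.3]{HMT1}); since $F$ is a filter, ${\sf d}_{ik}\in F$. For $i=k$ transitivity is trivial. Part (2)(a), that $\bar E$ is an equivalence on $V$, is then immediate because $\bar E$ is defined coordinatewise from $E$.

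For the well-definedness of $f$, the heart of the argument is the identity
\[
{\sf s}_\tau x \cdot d_{\tau,\sigma} = {\sf s}_\sigma x \cdot d_{\tau,\sigma}, \qquad \text{where } d_{\tau,\sigma}:=\prod_{i:\tau(i)\neq\sigma(i)} {\sf d}_{\tau(i)\sigma(i)},
\]
valid for any $\tau,\sigma\in V$ and any $x\in \C$. The product is finite because $\tau,\sigma$ are finite transformations, so their disagreement set is contained in the (finite) union of their supports. This identity is proved by induction on the size of the disagreement set, the one-step version being the standard cylindric fact ${\sf d}_{ij}\cdot {\sf s}_\tau x = {\sf d}_{ij}\cdot {\sf s}_{\tau'} x$ when $\tau'$ agrees with $\tau$ everywhere except at a single position where $\tau$ takes value $i$ and $\tau'$ takes value $j$; this in turn follows from properties of Boolean endomorphisms ${\sf s}_\tau$ and the relation ${\sf d}_{ij}\cdot x \leq {\sf d}_{ij}\cdot {\sf s}_i^j x$ (\cite[Theorem 1.5.10]{HMT1}). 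Now if $\tau\equiv_E\sigma$, then every factor of $d_{\tau,\sigma}$ lies in $F$, hence $d_{\tau,\sigma}\in F$; combined with the displayed identity this gives ${\sf s}_\tau x\in F \Longleftrightarrow {\sf s}_\sigma x\in F$, so $\{\bar\tau\in W: {\sf s}_\tau x\in F\}$ does not depend on the choice of representatives, and $f$ is well defined. This step is the main obstacle, since it requires extracting the right substitution identity from the cylindric axioms and confirming that the finite-support hypothesis on $V$ is used essentially to keep the product $d_{\tau,\sigma}$ a finite meet in $\C$.

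Finally, for the identification of $W$ with the weak space, define $\Phi:W\to {}^\alpha(\alpha/E)^{(\bar p)}$ by $\tau/\bar E\mapsto [\tau]$ with $[\tau](i)=\tau(i)/E$. Well-definedness is immediate from the definition of $\bar E$. To see $[\tau]$ actually belongs to the weak space (with base point $\bar p(i)=i/E$), observe that $[\tau](i)\neq \bar p(i)$ forces $\tau(i)\neq i$, and the latter occurs on only finitely many $i$ since $\tau\in V$. Injectivity is the converse reading of the definition of $\bar E$. For surjectivity, given $s\in {}^\alpha(\alpha/E)^{(\bar p)}$, set $\tau(i)$ to be any representative of $s(i)$ when $s(i)\neq i/E$ and $\tau(i)=i$ otherwise; then $\tau$ differs from the identity only on the finite set $\{i:s(i)\neq i/E\}$, so $\tau\in V$ and $\Phi(\tau/\bar E)=s$. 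This completes the identification $W={}^\alpha[\alpha/E]^{(\bar p)}$ claimed in the statement.
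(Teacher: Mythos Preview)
Your proof is correct. The paper itself gives no proof for this lemma, treating the claims as routine consequences of the standard cylindric-algebra facts in \cite{HMT1} (the diagonal identities of \S 1.3 for part (1), and the substitution calculus of \S 1.5 and \S 1.11 for the well-definedness of $f$); your argument supplies exactly those details, and the bijection with the weak space is handled cleanly. One minor remark: the operation ${\sf s}_\tau$ for an arbitrary finite transformation $\tau$ is only guaranteed to make sense in $\Dc_\alpha$ (cf.\ \cite[1.11.9--1.11.12]{HMT1}), so both the paper's statement and your proof tacitly rely on that hypothesis, which is in force throughout the section.
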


\begin{definition} Let $\A$ be an algebra having a cylindric reduct of dimension $\alpha$. 
A Boolean ultrafilter $F$ of $\A$ is said to be {\it Henkin} if for all $k<\alpha$, for all $x\in A$, whenever
${\sf c}_kx\in F$, then there exists $l\notin \Delta x$ such
that ${\sf s}_l^kx\in F$.
\end{definition}

\begin{lemma} Let everything be as in the previous lemma, and assume that $F$ is a Henkin ultrafilter.
Then $f$ as defined in the previous lemma is a $\sf CA$ homomorphsim.
\end{lemma}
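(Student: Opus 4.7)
The plan is to verify that $f$ preserves each operation in the signature of cylindric algebras in turn: first the Boolean operations, then the diagonal elements, and finally the cylindrifications, where the Henkin hypothesis is essential.

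For the Boolean part, since $F$ is a Boolean ultrafilter of $\A$ and, for any finite transformation $\tau\in V$, the map ${\sf s}_\tau$ is a Boolean endomorphism of $\A$ (cf.\ \cite[Thm.~1.11.11]{HMT1}), one has ${\sf s}_\tau(x\cdot y)={\sf s}_\tau x\cdot{\sf s}_\tau y$ and ${\sf s}_\tau(-x)=-{\sf s}_\tau x$. Combined with the filter/ultrafilter properties of $F$, this immediately gives $f(x\cdot y)=f(x)\cap f(y)$ and $f(-x)=W\setminus f(x)$. For diagonals, I would use the identity ${\sf s}_\tau{\sf d}_{ij}={\sf d}_{\tau(i)\tau(j)}$ (valid in $\sf Dc$) so that $\bar\tau\in f({\sf d}_{ij})$ iff $(\tau(i),\tau(j))\in E$ iff $[\tau](i)=[\tau](j)$, i.e.\ $\bar\tau\in{\sf d}_{ij}^W$.

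The crux is cylindrification. The easy inclusion $f({\sf c}_i x)\supseteq {\sf c}_i^W f(x)$ follows by taking $\bar\sigma\in f(x)$ with $\bar\sigma\equiv_i\bar\tau$ and observing that ${\sf s}_\sigma x\le {\sf s}_\sigma{\sf c}_i x={\sf s}_\tau{\sf c}_i x$, the latter equality using $i\notin\Delta({\sf c}_i x)$ and $\sigma|_{\alpha\setminus\{i\}}\equiv_E\tau|_{\alpha\setminus\{i\}}$. For the harder inclusion, suppose ${\sf s}_\tau{\sf c}_i x\in F$. Since $\A\in\sf TDc_\alpha$, the dimension sets $\Delta x$ and $\Delta({\sf s}_\tau{\sf c}_i x)$ are proper subsets of $\alpha$ and $\tau$ is finite, so I can choose $k\in\alpha$ outside $\Delta x\cup\{i\}\cup\dom\tau\cup\rng\tau$. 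Using the standard identities ${\sf c}_i x={\sf c}_k{\sf s}_k^i x$ (valid since $k\notin\Delta x$) and ${\sf s}_\tau{\sf c}_k={\sf c}_k{\sf s}_\tau$ (valid since $k$ is fixed by $\tau$ and outside $\rng\tau$), I rewrite
\[
{\sf s}_\tau{\sf c}_i x={\sf s}_\tau{\sf c}_k{\sf s}_k^i x={\sf c}_k\bigl({\sf s}_\tau{\sf s}_k^i x\bigr)\in F.
\]
By the Henkin hypothesis applied to $y={\sf s}_\tau{\sf s}_k^i x$, there exists $l\notin\Delta y$ with ${\sf s}_l^k y\in F$. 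Define $\sigma\in V$ by $\sigma(i)=l$ and $\sigma(j)=\tau(j)$ for $j\neq i$. Unraveling the composition of substitutions (using $k\notin\rng\tau$ and $\tau(k)=k$), one checks ${\sf s}_\sigma x={\sf s}_l^k{\sf s}_\tau{\sf s}_k^i x\in F$; thus $\bar\sigma\in f(x)$, and since $\sigma$ and $\tau$ agree off $i$ we get $\bar\sigma\equiv_i\bar\tau$, giving $\bar\tau\in{\sf c}_i^W f(x)$.

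The main obstacle is the cylindrification step: one must juggle dimension sets, pick $k$ genuinely fresh with respect to $x$, $i$ and $\tau$, and invoke the correct renaming and commutation identities for substitutions on $\sf Dc$ algebras; once these are in place, the Henkin property delivers the required witness $l$ and the construction of $\sigma$ is forced.
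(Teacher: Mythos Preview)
Your argument is the standard one and is correct: Boolean operations via the endomorphism property of ${\sf s}_\tau$, diagonals via ${\sf s}_\tau {\sf d}_{ij}={\sf d}_{\tau(i)\tau(j)}$, and cylindrifiers by renaming the bound variable to a fresh index $k$, commuting ${\sf c}_k$ past ${\sf s}_\tau$, and invoking the Henkin property to produce the witness $l$ from which the required $\sigma$ is built. The paper does not actually prove this lemma; it simply refers to \cite{IGPL2}. So your sketch is considerably more explicit than what the paper offers, and there is no in-text argument to compare against.

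One caveat worth tidying: under the paper's convention ${\sf s}_i^j x={\sf c}_i({\sf d}_{ij}\cdot x)$ (the \emph{subscript} is the variable being eliminated), the renaming identity you need for $k\notin\Delta x$, $k\ne i$, is ${\sf c}_i x={\sf c}_k {\sf s}_i^k x$, not ${\sf c}_k {\sf s}_k^i x$; correspondingly the Henkin witness for ${\sf c}_k y\in F$ should read ${\sf s}_k^l y\in F$. Your write-up has these indices transposed (as, arguably, does the paper's own statement of the Henkin property), so the unraveling of the composite substitution yielding $\sigma$ should be rechecked with the indices fixed. Once that bookkeeping is straightened out, the argument goes through exactly as you describe.
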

\begin{proof}\cite{IGPL2}.
\end{proof}
\begin{definition}\label{interior}
Let everything be as in the hypothesis of lemma \ref{essence}.
For $s\in W$ and $k<\alpha$ we write $s^k_u$ for $s^k_{u/E}$. 
For $k\in \alpha,$ then  $I_k$ is the (interior) operator  on $\wp(W)$ 
defined by $I_k(X)=\{s\in W: s_k\in {\sf int}\{u\in U: s^k_{u}\in X\}\}.$  Similarly, if $V: U/E\to \wp(\wp(U/E))$ is a Chang system
then $\Box_k$ is defined on $\wp(W)$ by  
$s\in \Box_k(X)\Longleftrightarrow \{u/E\in U/E: {s}^k_{u}\in X\}\in V[s(i/E)].$
\end{definition}

\begin{lemma}\label{essence}
\begin{enumarab}
\item  Assume that $\C\in \sf TDc_{\alpha}$, $F$ is a Henkin  ultrafilter of $\C$ and $a\in F.$
Then there exist a non-empty set $U$, $p\in {}^{\alpha}U,$
a topology on $U/E$ and a homomorphism 
$f:\C\to (\wp(W), I_i)_{i<\alpha}$ with $f(a)\neq 0$, 
where $W={}^{\alpha}[U/E]^{\bar{p}}$, with $E$ as defined in lemma \ref{diagonal}
and  $I_i$ $(i<\alpha)$ is the concrete interior operator 
defined  in \ref{interior}.
\item  Assume that $\C$ is an $S5$ dimension complemented Chang algebra, $F$ is a Henkin  ultrafilter of $\C$ and $a\in F$. 
Then there exist a Chang system  $V: U/E\to \wp(\wp(U/E))$, $p\in {}^{\alpha}U$, and a homomorphism 
$f:\C\to (\wp(W), \Box_i)_{i<\alpha}$ with $f(a)\neq 0$, 
where $W={}^{\alpha}[U/E]^{(\bar{p})}$ and the concrete box operators 
are defined  from $V$ as in \ref{interior}.
\end{enumarab}
\end{lemma}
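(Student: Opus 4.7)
The plan is to reuse the Henkin-style construction already assembled in Lemma~\ref{diagonal} and its follow-up lemma: from the Henkin ultrafilter $F$ on $\C$, the equivalence relation $E$ on $\alpha$ defined by $(i,j)\in E\iff {\sf d}_{ij}\in F$ produces the weak space $W={}^{\alpha}[U/E]^{(\bar p)}$ (with $U=\alpha$ and $p=\mathrm{id}_\alpha$, say) together with the $\CA$-homomorphism $f:\C\to\wp(W)$ given by $f(x)=\{\bar\tau\in W:{\sf s}_\tau x\in F\}$. Since $a\in F$ and ${\sf s}_{\mathrm{id}}a=a$, the class $\overline{\mathrm{id}}$ lies in $f(a)$, so $f(a)\neq 0$. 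What remains is to put a topology on $U/E$ (resp.\ a Chang system $V$ in part (2)) so that $f$ also commutes with each $I_k$ (resp.\ each $\Box_k$).

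For the topology, I would define a base $\mathcal{B}$ consisting of the sets
\[ O(k,x)=\{u/E\in U/E:{\sf s}^{k}_{u}I_{k}x\in F\} \]
ranging over $x\in C$ and $k\in\alpha\smallsetminus\Delta x$; the dimension-complementedness of $\C$ ensures such a $k$ exists for every $x$. Axiom~(7) of Definition~\ref{topology}, ${\sf s}^{i}_{j}I_{i}p=I_{j}{\sf s}^{i}_{j}p$ for $j\notin\Delta p$, lets one check that $O(k,x)$ is independent of the choice of spare coordinate $k$, so $\mathcal{B}$ is a single well-defined family. Axiom~(3) together with the filter property of $F$ gives closure under finite intersections, and axiom~(5) gives $U/E\in\mathcal{B}$; so $\mathcal{B}$ is a base for a topology $\tau$ on $U/E$.

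The crux is then to verify that, for every $x\in C$ and every $k<\alpha$,
\[ f(I_{k}x)=I_{k}^{\wp(W)}(f(x)), \]
with the right-hand side computed from $\tau$ by Definition~\ref{interior}. The inclusion ``$\subseteq$'' is the easier direction: axiom~(2) and monotonicity give ${\sf s}^{k}_{u}I_{k}x\leq {\sf s}^{k}_{u}x$, so that for $s\in f(I_{k}x)$ the section $\{u/E:s^{k}_{u}\in f(x)\}$ contains the basic open set $O(k,I_{k}x)$, which in turn contains $s_{k}$. The reverse inclusion is where the Henkin property of $F$ does its work: if $s$ lies in the concrete interior, then the section of $f(x)$ at coordinate $k$ contains a basic open neighbourhood of $s_{k}$, which can be traced back, via the substitution identity (7) and the witnessing property of $F$ applied to an appropriate cylindrified element (here axiom~(4), $p\leq I_{k}I_{k}p$, together with the Henkin witness for ${\sf c}_{k}I_{k}x$ play the role of the existential-witness argument in classical completeness proofs), to give ${\sf s}_{\tau}I_{k}x\in F$ for the representative $\tau$ of $s$. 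This is the main obstacle and must be carried out with some care, but all the axioms of Definition~\ref{topology} are tailored exactly so that this matching goes through.

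For part~(2), the same template works with topologies replaced by Chang systems. One defines $V(u/E)$ as the filter of subsets of $U/E$ generated by sets of the form $\{v/E:{\sf s}^{k}_{v}\Box_{k}x\in F\}$ (again using dimension-complementedness to pick a spare $k$), and verifies the analogue of the displayed equality with $I_{k}$ replaced by $\Box_{k}$ via exactly the same substitution+Henkin-witness argument; the extra $S5$ axiom $\neg\Box_{i}\neg p\leq\Box_{i}\neg\Box_{i}\neg p$ is precisely what is needed to guarantee that the $V(u/E)$ behave as in the $S5$ regime, while the other box axioms are used only to close $V(u/E)$ under the required operations. In both parts the whole argument reduces, once the base/Chang-system is defined, to a careful but routine bookkeeping of substitutions and Henkin witnesses inside the already-available $\CA$-homomorphism $f$.
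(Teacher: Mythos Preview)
Your overall plan matches the paper's: reuse the $\CA$-homomorphism $f(x)=\{\bar\tau:{\sf s}_\tau x\in F\}$, manufacture a topology (resp.\ Chang system) on $U/E$ out of $F$, and then verify that $f$ commutes with each $I_k$ (resp.\ $\Box_k$). Your basic opens $O(k,x)$ are essentially the paper's $O_{p,i}$, and the easy inclusion $f(I_k x)\subseteq I_k^{\wp(W)}(f(x))$ is handled as you say.

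The gap is in the hard inclusion. You point to axiom~(4) and ``the Henkin witness for ${\sf c}_k I_k x$'' as the engine, but this is not the right mechanism. The real obstacle is that a point $[x]$ lying in the concrete interior only tells you that some basic open $O_{r,j}$, defined by an \emph{auxiliary} element $r$, is contained in the $i$-section of $f(p)$; you must transfer the interior operator from $r$ over to $p$. The paper does this with axiom~(1), ${\sf q}_i(p\oplus q)\leq {\sf q}_i(I_ip\oplus I_iq)$: the containment is rewritten as a Boolean biconditional, one passes to ${\sf q}_i[I_i{\sf s}^i_j r\cdot {\sf s}_x p\ \oplus\ I_i{\sf s}^i_j r]\in F$, and axiom~(1) (with idempotency from (2),(3),(4)) inserts $I_i$ in front of ${\sf s}_x p$. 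Substituting $x_i$ back for $i$ then yields ${\sf s}_x I_i p\in F$. Axiom~(1) is the one axiom in Definition~\ref{topology} crafted exactly for this transfer, and your sketch never invokes it; axioms~(4) and~(7) alone do not provide a route from ``$O_{r,j}\subseteq$ section of $f(p)$'' to ``${\sf s}_\tau I_k p\in F$''. (The Henkin property \emph{is} used, but dually---to pass from ``${\sf s}^i_v z\in F$ for every $v$'' to ``${\sf q}_i z\in F$''---not to produce a witness for ${\sf c}_k I_k x$, which is not known to lie in $F$.)

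For part~(2), your Chang system $V(u/E)$ as written does not depend on $u$, but it must: the paper takes
\[
V(m/E)=\bigl\{\{j/E:{\sf s}^i_j p\in F\}:p\in A,\ i<\alpha,\ {\sf s}^i_m\Box_i p\in F\bigr\},
\]
so a set enters $V(m/E)$ precisely when the corresponding $\Box_i p$ holds at $m$.
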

\begin{proof}
We prove the first item. The proof of the second item is the same. Let $W={}^{\alpha}[\alpha/E]^{(\bar{Id})}.$
Define, as we did before,  $f:\A\to \wp(W)$ via
$$p\mapsto \{\bar{\tau} \in W: {\sf s}_{\tau}p\in F\}.$$
For $i\in \alpha$ and $p\in \A$, let 
$$O_{p,i}=\{k/E\in \alpha/E: {\sf s}_i^kI(i)p\in F\}.$$
Let $${\cal B}=\{O_{p,i} : i\in \alpha, p\in A\}.$$
Then it is easy to check that ${\cal B}$ 
is the base for a topology on $\alpha/E$.

To define the interior operations, we set
for each $i<\alpha$ 
$$J_i: \wp(W)\to \wp (W)$$
by $$[x]\in J_iX\Longleftrightarrow \exists U\in {\cal B}(x_i/E\in U\subseteq \{u/E\in \alpha/E: [x]^i_{u/E}\in X\}),$$
where $X\subseteq V$. Note that 
$[x]^i_{u/E}=[x^i_u]$.
We now check that $f$ preserves the interior operators $J_i$ $(i<\alpha)$, too.
We need to show 
$$\psi(I_ip)=J_i(\psi(p)).$$
The reasoning is like \cite{g}; the difference is that in \cite{g}, the constants
denoted by  $x_i$ are endomorphisms on $\A$; the value $x_i$ at $j$ 
corresponds in our adopted approach to ${\sf s}^j_u$ where $u=x_i(j)$.
Let $[x]$ be in $\psi(l_ip)$. Let 
$${\sf sup}(x)=\{k\in \alpha: x_k\neq k\}.$$
Then, by definition,  ${\sf s}_xI_ip\in F$. Hence 
$${\sf s}_{x_i}^i I_i{\sf s}^{i_1}_{x_1}\ldots {\sf s}^{i_n}_{x_n} p\in F,$$
where
$${\sf sup}(x)\sim \{i\}=\{j_1,\ldots, j_n\}.$$
Let 
$$y=[j_1|x_1]\circ \ldots [j_n| x_n].$$
Then $x_i/E\in \{u/E: {\sf s}_u^i I(i){\sf s}_yp\in F\}\in q.$
But $I_i{\sf s}_yp\leq {\sf s}_y p,$ hence
$$U=\{u/E: {\sf s}_u^i I_i{\sf s}_yp\in F\}\subseteq \{u/E: {\sf s}_u^i{\sf s}_yp\in F\}.$$
It follows that $x_i/E\in U\subseteq \{u/E: x^i_u\in \Psi(p)\}.$
Thus $[x]\in J_i\psi(p).$

Now we prove the other direction.
Let $[x]\in J_i\Psi(p)$. Let $U\in \B$ be such that 
$$x_i/E\in U\subseteq \{u/E\in \alpha/E: {\sf s}_u^i{\sf s}_xp\in F\}.$$
Assume that $U=O_{r,j}$, where  $r\in \A$ and $j\in \alpha$.
Let $u\in \alpha\sim [\Delta p\cup \Delta r\cup \{i,j\}]$. By dimension complementedness 
such a $u$ exists. 
Then we have:  
\begin{align*}
{\sf s}_u^jI_jr\in F&\Longleftrightarrow {\sf s}_u^i{\sf s}_xp\in F,\\
{\sf s}_u^jI_jr\cdot {\sf s}_u^i{\sf s}_xp\in F&\Longleftrightarrow {\sf s}_u^jI_jr\in F.
\end{align*}
But
${\sf s}_u^jI_jr={\sf s}_u^iI_i{\sf s}_j^ir$, so we have
\begin{align*}
{\sf s}_u^iI_i{\sf s}_j^ir\cdot {\sf s}_u^i{\sf s}_xp &\oplus {\sf s}^i_uI_i{\sf s}_j^ir\in F,\\
{\sf s}_u^i[I_i{\sf s}_j^ir\cdot {\sf s}_xp&\oplus I_i{\sf s}_j^ir]\in F,\\
{\sf q}_i[I_i{\sf s}_j^ir\cdot {\sf s}_xp&\oplus I_i{\sf s}_j^ir]\in F,\\
{\sf q}_i[I_i{\sf s}_j^ir\cdot I_i{\sf s}_xp&\oplus I_i{\sf s}_j^ir]\in F,\\
{\sf s}^i_{x_i}[I_i{\sf s}_j^ir\cdot I_i{\sf s}_xp&\oplus I_i{\sf s}_j^ir]\in F,\\
{\sf s}^j_{x_i}I_jr\cdot s^i_{x_i}I_i{\sf s}_xp&\oplus {\sf s}^j_{x_i}I_jr\in F.\\
\end{align*}
But ${\sf s}^j_{x_i}I_jr\in F,$ hence 
${\sf s}^i_{x_i}I_i{\sf s}_xp\in F$, and so $x\in \Psi(I_ip)$ as required.

For the second part,  define $V: U/E\to \wp(\wp(U/E))$ by
$$V(m/E)=\{\{j/E\in U/E: {\sf s}^i_jp\in F\}: p\in A, i\in I, {\sf s}^i_m\Box(i)p\in F\}.$$ 
Using the above reasoning together with the reasoning in \cite{g2} p. 46-47, it can be checked that
$V$ is as required.
\end{proof}

Having lemma \ref{essence} at hand, we can now show that 
Henkin constructions used for cylindric algebras to prove the interpolation theorems 
\cite{IGPL, AUamal} works
when the algebras are endowed with interior operators. The only significant difference 
between the coming proof and the proofs in the two cited references is that in these references the 
interpolation property was proved for {\it countable} (possibly dimension restricted) free algebras. 
To get round the obstacle
of uncountability, we use dilations to {\it regular cardinals} which gives us `enough space'. 
This guarantees that witnesses can always be found and do not cash with cylindrifiers. 
The reader is referred to \cite{IGPL} for omitted details. 
Recall that for an algebra $\A$ and $X\subseteq \A$, $\Sg^{\A}X$ is the subalgebra of $\A$ generated by $X$.

The algebraic version of the {\it Craig interpolation property} is defined as follows:

\begin{definition} An algebra $\A\in \sf TCA_{\alpha}$ has the {\it interpolation property} 
if for all $X_1, X_2\subseteq \A$, if whenever $a\in \Sg^{\A}X_1$ and $c\in \Sg^{\A}X_2$ are such that 
$a\leq c,$ then there exists $b\in \Sg^{\A}(X_1\cap X_2)$ such that $a\leq b\leq c$, in which case we say that
$b$ is {\it an interpolant of $a$ and $c$} or even  simply {\it an interpolant.}
\end{definition}

\begin{theorem}\label{in} Let $\alpha$ be an infinite ordinal. let $\beta$ be a cardinal.  Let $\rho:\beta\to \wp(\alpha)$ such that
$\alpha\sim \rho(i)$ is infinite for all $i\in \beta$. Then $\Fr_{\beta}^{\rho}\TCA_{\alpha}$ has the interpolation property.
\end{theorem}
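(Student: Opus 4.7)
The plan is to imitate the Henkin-style interpolation proofs developed in \cite{IGPL,AUamal} for cylindric algebras, transporting them to the topological setting. The route is: suppose no interpolant exists; dilate $\A$ to a regular cardinal whose extra dimensions will supply Henkin witnesses; build two compatible Henkin-type ultrafilters, one forcing $a$ and the other forcing $-c$, which agree on the common subalgebra; apply the representation of Lemma~\ref{essence} to obtain matching representations on a common weak-space base; and finally invoke the universal property of the dimension-restricted free algebra to splice the two representations into a single homomorphism $\chi$ evaluating $a\cdot -c$ to a non-empty set, contradicting $a\leq c$.

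Concretely, suppose for contradiction that no $b\in \Sg^{\A}(X_1\cap X_2)$ satisfies $a\leq b\leq c$. Fix a regular cardinal $\lambda > |\beta|+\alpha$ and, using Lemma~\ref{prep} and Theorem~\ref{dilate}, pass to the $\lambda$-dilation $\A^{+}$ of $\A$, with $\A\cong \Nr_\alpha \A^{+}$ and $\Sg^{\A}Y=\Nr_\alpha \Sg^{\A^{+}}Y$ for all $Y\subseteq A$. Write $\D=\Sg^{\A^{+}}(X_1\cap X_2)$, $\A_1=\Sg^{\A^{+}}X_1$, $\A_2=\Sg^{\A^{+}}X_2$. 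The absence of an interpolant lifts from $\A$ to $\A^{+}$: any candidate in $\D$ can be cylindrified along dimensions in $\lambda\setminus(\alpha\cup\Delta a\cup\Delta c)$ to yield one of dimension $\subseteq \alpha$, hence in $\Nr_\alpha \D=\Sg^{\A}(X_1\cap X_2)$. Now consider the upward filter $T_1=\{h\in \D:a\leq h \text{ in } \A_1\}$ and the downward ideal $T_2=\{h\in \D:h\leq c \text{ in } \A_2\}$. By hypothesis $T_1\cap T_2=\emptyset$, so Boolean filter-ideal separation on $\D$ yields an ultrafilter $U$ of $\D$ with $T_1\subseteq U$ and $U\cap T_2=\emptyset$; a short computation then shows $U\cup\{a\}$ generates a proper filter of $\A_1$ and $U\cup\{-c\}$ generates a proper filter of $\A_2$.

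A transfinite zig-zag of length $\lambda$ now extends $U\cup\{a\}$ and $U\cup\{-c\}$ to Henkin ultrafilters $F_1$ of $\A_1$ and $F_2$ of $\A_2$, maintaining $F_1\cap \D=F_2\cap \D=:U^{\ast}$ throughout. By regularity of $\lambda$, at each stage the dimensions appearing in previously processed elements number $<\lambda$, so a fresh $l\in \lambda$ outside all relevant $\Delta x$'s is always available to instantiate every ${\sf c}_k x \in F_j$ by some ${\sf s}_l^k x\in F_j$; axiom~(7) of Definition~\ref{topology} guarantees these witnessing substitutions interact correctly with the interior operators. Applying Lemma~\ref{essence} to $F_1$ and $F_2$ produces homomorphisms $\psi_j\colon \A_j\to (\wp(W), I_i)_{i<\lambda}$ for $j=1,2$. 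Since the diagonals lie in $\D$ and $F_1\cap \D=F_2\cap \D$, both induce the same equivalence $E$ on $\lambda$ and hence the common base $W={}^{\lambda}(\lambda/E)^{(\bar p)}$, and $\psi_1,\psi_2$ agree on $\D$ because ${\sf s}_\tau d\in U^{\ast}$ decides both. The universal property of the dimension-restricted free algebra (combined with the free-algebra intersection property $\Sg^{\A^{+}}X_1\cap\Sg^{\A^{+}}X_2=\D$) then splices $\psi_1$ and $\psi_2$ into a single homomorphism $\chi\colon \A^{+}\to (\wp(W),I_i)_{i<\lambda}$ with $\chi\!\upharpoonright\!\A_j=\psi_j$. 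Since $a\in F_1$ and $-c\in F_2$, the equivalence class $\overline{\mathrm{id}}\in W$ of the identity sequence lies in both $\psi_1(a)$ and $\psi_2(-c)$, so $\overline{\mathrm{id}}\in \chi(a)\cap\chi(-c)=\chi(a\cdot -c)$. But $a\leq c$ forces $a\cdot -c=0$, whence $\chi(a\cdot -c)=\emptyset$, the desired contradiction.

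The principal obstacle is the joint Henkinization: simultaneously extending $U\cup\{a\}$ and $U\cup\{-c\}$ to Henkin ultrafilters $F_1,F_2$ whose $\D$-traces remain equal throughout a construction of length $\lambda$, while choosing cylindric witnesses that avoid the dimension data already accumulated on either side. One must further honour the interior-operator compatibility ${\sf s}_j^i I_i p=I_j{\sf s}_j^i p$ ($j\notin \Delta p$) so that the concrete interior operators produced by Lemma~\ref{essence} match the abstract $I_i$'s on both sides of the zigzag. Regularity of $\lambda$ is what keeps the forbidden-dimension set below $\lambda$ at every stage, guaranteeing an inexhaustible supply of witnesses and making the whole amalgamated Henkin construction go through.
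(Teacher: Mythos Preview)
Your proposal is correct and follows essentially the same route as the paper's proof: dilate the free algebra to a regular cardinal via Lemma~\ref{prep}, lift the no-interpolant hypothesis, build two compatible Henkin ultrafilters on $\Sg^{\B}X_1$ and $\Sg^{\B}X_2$ with equal trace on $\Sg^{\B}(X_1\cap X_2)$, represent via Lemma~\ref{essence}, splice using dimension-restricted freeness, and derive a contradiction from $\overline{Id}\in f(a\cdot -c)$. The only cosmetic difference is that the paper pre-selects all Henkin witnesses $u_i,v_i$ at once and then extends a common proper filter $H$ on $\Sg^{\B}(X_1\cap X_2)$ to an ultrafilter $H^\ast$ before lifting to $F_1,F_2$, whereas you first separate $T_1,T_2$ to get $U$ and then zig-zag; and the paper splices at the $\alpha$-level (using freeness of $\A$) while you splice at the $\lambda$-level (using freeness of $\A^{+}$, which is legitimate by Lemma~\ref{prep}).
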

\begin{proof}
Let $\A=\Fr_{\beta}^{\rho}\TCA_{\alpha}$. 
Let $a\in \Sg X_1$ and $c\in \Sg X_2$ be such that $a\leq c$. We want to find an interpolant in 
$\Sg^{\A}(X_1\cap X_2)$. By lemma \ref{prep} let $\B\in \sf TCA_{\kappa}$, $\kappa$ a regular cardinal, such that $\A=\Nr_{\alpha}\B$. 
Assume that no such interpolant exists in $\A$, then no interpolant exists in $\B$, because if $b$ is an interpolant
in $\Sg^{\B}(X_1\cap X_2),$ then there exists a finite set $\Gamma\subseteq \kappa\sim \alpha$, such that
${\sf c}_{(\Gamma)}b\in \Nr_{\alpha}\Sg^{\B}(X_1\cap X_2)=\Sg^{\Nr_{\alpha}\B}(X_1\cap X_2)=\Sg^{\A}(X_1\cap X_2)$; 
which is clearly an interpolant in $\A$.
Arrange $\kappa\times \Sg^{\B}X_1$ 
and $\kappa\times \Sg^{\B}X_2$ 
into $\kappa$-termed sequences:
$$\langle (k_i,x_i): i\in \kappa\rangle\text {  and  }\langle (l_i,y_i):i\in \kappa\rangle
\text {  respectively.}$$ Since $\kappa$ is regular, we can define by recursion 
$\kappa$-termed sequences of witnesses: 
$$\langle u_i:i\in \kappa\rangle \text { and }\langle v_i:i\in \kappa\rangle$$ 
such that for all $i\in \kappa$ we have:
$$u_i\in \mu\smallsetminus
(\Delta a\cup \Delta c)\cup \cup_{j\leq i}(\Delta x_j\cup \Delta y_j)\cup \{u_j:j<i\}\cup \{v_j:j<i\}$$
and
$$v_i\in \mu\smallsetminus(\Delta a\cup \Delta c)\cup 
\cup_{j\leq i}(\Delta x_j\cup \Delta y_j)\cup \{u_j:j\leq i\}\cup \{v_j:j<i\}.$$

For a Boolean algebra $\C$  and $Y\subseteq \C$, we write 
$fl^{\C}Y$ to denote the Boolean filter generated by $Y$ in $\C.$
Now let 
\begin{align*}
Y_1&= \{a\}\cup \{-{\sf  c}_{k_i}x_i+{\sf s}_{u_i}^{k_i}x_i: i\in \kappa\},\\
Y_2&=\{-c\}\cup \{-{\sf  c}_{l_i}y_i +{\sf s}_{v_i}^{l_i}y_i:i\in \kappa\},\\
H_1&= fl^{Bl\Sg^{\B}(X_1)}Y_1,\  H_2=fl^{Bl\Sg^{\B}(X_2)}Y_2,\\ 
H&=fl^{Bl\Sg^{\B}(X_1\cap X_2)}[(H_1\cap \Sg^{\B}(X_1\cap X_2)
\cup (H_2\cap \Sg^{\B}(X_1\cap X_2)].
\end{align*}
Then $H$ is a proper filter of $\Sg^{\B}(X_1\cap X_2)$ \cite{IGPL}.
Proving that $H$ is a proper filter of $\Sg^{\B}(X_1\cap X_2)$,
let $H^*$ be a (proper Boolean) ultrafilter of $\Sg^{\B}(X_1\cap X_2)$
containing $H.$
We obtain  ultrafilters $F_1$ and $F_2$ of $\Sg^{\B}X_1$ and $\Sg^{\B}X_2$,
respectively, such that
$$H^*\subseteq F_1,\ \  H^*\subseteq F_2$$
and (**)
$$F_1\cap \Sg^{\B}(X_1\cap X_2)= H^*= F_2\cap \Sg^{\B}(X_1\cap X_2).$$
Now for all $x\in \Sg^{\B}(X_1\cap X_2)$ we have
$$x\in F_1\text { if and only if } x\in F_2.$$
Also from how we defined our ultrafilters, $F_i$ for $i\in \{1,2\}$ are Henkin, 
that is, 
they  satisfy the following condition:

(*) For all $k<\mu$, for all $x\in \Sg^{\B}X_i$
if ${\sf  c}_kx\in F_i$ then ${\sf s}_l^kx$ is in $F_i$ for some $l\notin \Delta x.$
We obtain  ultrafilters $F_1$ and $F_2$ of $\Sg^{\B}X_1$ and $\Sg^{\B}X_2$, 
respectively, such that 
$$H^*\subseteq F_1,\ \  H^*\subseteq F_2$$
and (**)
$$F_1\cap \Sg^{\B}(X_1\cap X_2)= H^*= F_2\cap \Sg^{\B}(X_1\cap X_2).$$
Now for all $x\in \Sg^{\B}(X_1\cap X_2)$ we have 
$$x\in F_1\text { if and only if } x\in F_2.$$ 
  
Fix $m\in \{1,2\}$. the definition of the representations here slightly differs from the definition in \ref{essence} for the equivalence
relation $E$ is now defined on $\beta$ the dilated dimension, but this does not alter the proof that the maps to be defined
using the hitherto constructed Henkin ultrafilters are homomorphisms.  
In more detail, let $V={}^{\alpha}\beta^{(Id)}$. $E$ denotes the equivalence relation on $\beta$ defined via $(i, j)\in E$ iff ${\sf d}_{ij}\in F_m$.
Now define for $\sigma, \tau\in V$,  $\sigma \bar{E}\tau$ 
iff ${\sf d}_{\sigma(i), \tau(i)}\in F_m$ for all $i\in \alpha$. Let $W= V/\bar{E}.$ 
For $h\in W,$ write $h=\bar{\tau}$ for $\tau\in V$ such that
$\tau(j)/E=h(j)$ for all $j\in \alpha$. Define for $i<\alpha$, and 
$X\subseteq W={}^{\alpha}(\beta/\bar{E})^{\bar{Id}}$ the $i$th interior operator
$$I_i(X)=\{s\in W: s_i\in \{u/E\in \beta/E: s^i_u\in X\}\}.$$
Now define, as in lemma \ref{essence}, $f_m:\Sg^{\A}X_m\to (\wp(W), I_i)_{i<\alpha}$ by
$$f_m(a)=\{\bar{\tau}\in W: {\sf s}_{\tau\cup Id}^{\B}a\in F_m\}.$$
 
It can be checked exactly as before that $f_m$ is a homomorphism. 

Without loss of generality, we can assume that $X_1\cup X_2=X.$
We have $f_1$ and $f_2$ agree on $X_1\cap X_2$. 
So that $f_1\cup f_2$ defines a function on $X_1\cup X_2$. 
By dimension restricted freeness,  
it follows that there is a homomorphism  $f$ from $\A$ to $(\wp (W), I_i)_{i<\alpha}$ 
such that $f_1\cup f_2\subseteq f$.   Then $\bar{Id}\in f(a)\cap f(-c) = f(a\cdot -c).$ 
This is so because ${\sf s}_{Id}a=a\in F_1$
${\sf s}_{Id}(-c)=-c\in F_2.$ But this contradicts 
the premise that $a\leq c.$

\end{proof}

The respresentabilty of $\TDc_{\alpha}$s can be discerned below the surface of the previous proof, so that
the representability result in \cite{g} is a special case.  In more detail, we have:

\begin{corollary}\label{representability} Every algebra $\A\in \TDc_{\alpha}$ is representable.
\end{corollary}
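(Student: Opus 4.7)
The plan is to extract from the Henkin construction buried inside the proof of Theorem \ref{in} a representation theorem for a single algebra, by essentially specializing the argument to the trivial ``interpolation'' situation $X_1=X_2=A$. Fix $\A\in\TDc_\alpha$ and an arbitrary nonzero $a\in\A$; it suffices (by taking a subdirect product over all nonzero $a$) to find a homomorphism from $\A$ into an algebra of the form $(\wp(W), I_i)_{i<\alpha}$ on a weak space $W$ with its concrete interior operators such that $a$ is not sent to $0$.

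First I would dilate: by Theorem \ref{dilate}, since $\A\in\TDc_\alpha$, for any cardinal $\beta>\alpha$ there is a $\B\in\TCA_\beta$ with $\A\subseteq\Nr_\alpha\B$ and $\Sg^{\A}X=\Nr_\alpha\Sg^{\B}X$ for every $X\subseteq A$. Choose $\beta$ to be a regular cardinal strictly larger than $|B|$, so that $\beta\setminus\alpha$ provides a sufficiently large reservoir of indices to use as witnesses, and so that any ``finite'' information recorded during the Henkin construction (support sets of terms, previously chosen witnesses) remains a proper initial segment of $\beta$ at every stage.

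Next I would build a Henkin ultrafilter of $\B$ containing $a$ in the manner of the proof of Theorem \ref{in}. Enumerate all pairs $\langle (k_i,x_i):i<\beta\rangle$ with $k_i<\beta$ and $x_i\in B$, and recursively pick witnesses
\[
u_i\in\beta\setminus\Bigl(\Delta a\cup\bigcup_{j\le i}\Delta x_j\cup\{u_j:j<i\}\Bigr),
\]
which is possible by regularity of $\beta$ and because each $\Delta x_j$ is a subset of $\beta$ of size $<\beta$. Let $H$ be the Boolean filter of $\B$ generated by
\[
\{a\}\cup\{-{\sf c}_{k_i}x_i+{\sf s}_{u_i}^{k_i}x_i:i<\beta\}.
\]
The core lemma, identical in flavour to the properness argument in the proof of Theorem \ref{in}, is that $H$ is proper: any finite meet of generators is nonzero in $\B$, because ${\sf s}_{u_i}^{k_i}x_i\le{\sf c}_{k_i}x_i$ and the witnesses $u_i$ are fresh, so cylindrifying out the $u_i$'s reduces any such finite meet to $a\ne 0$. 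Extend $H$ to a Boolean ultrafilter $F$ of $\B$; by construction $F$ is Henkin in the sense given before Lemma \ref{essence}.

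Finally I would apply Lemma \ref{essence} to $(\B,F,a)$: this produces a set $U$, a point $p\in{}^\beta U$, a topology on $U/E$, and a homomorphism $\varphi:\B\to(\wp(W),I_i)_{i<\beta}$ with $\varphi(a)\ne 0$, where $W={}^{\beta}[U/E]^{(\bar p)}$ and the $I_i$'s are the concrete interior operators of Definition \ref{interior}. Restricting $\varphi$ to $\A\subseteq\Nr_\alpha\B$ gives a homomorphism from $\A$ into the $\alpha$-dimensional neat reduct of this algebra, which by Lemma \ref{box} respects the interior operators $I_i$ for $i<\alpha$ and is an honest topological (weak) set algebra of dimension $\alpha$; and $\varphi(a)\ne 0$ by choice of $F$. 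Forming the subdirect product of these representations over all nonzero $a\in\A$ yields the desired faithful representation of $\A$, and the same argument applies verbatim (using the second part of Lemma \ref{essence}) to $S5$ Chang algebras in $\TDc_\alpha$. The main technical obstacle is the properness of the Henkin filter at limit stages: this is where the choice of $\beta$ regular and the dimension complementedness of $\A$ (inherited by $\B$) are essential, guaranteeing that fresh witnesses $u_i$ can always be found disjoint from all previously used data.
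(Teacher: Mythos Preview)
Your approach is essentially identical to the paper's: dilate to a large regular dimension, enumerate all pairs $(k,x)$, recursively choose fresh witnesses, show the resulting Boolean filter is proper, extend to a Henkin ultrafilter, and read off the representation (the paper does this last step by writing down $\psi$ directly rather than citing Lemma~\ref{essence}, but it is the same map).

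One small slip: you write ``choose $\beta$ to be a regular cardinal strictly larger than $|B|$'', but $\B$ is the dilation of $\A$ into dimension $\beta$, so this is circular as stated. The paper fixes $\kappa$ regular with $\kappa\geq\max\{|\alpha|,|A|\}$ first; since the dilation $\B\in\TCA_\kappa$ is generated by $A$, one then has $|B|\leq\kappa$, so $\kappa\times B$ can be enumerated in order type $\kappa$, and each $\Delta x_j$ (which differs from a subset of $\alpha$ only by a finite set) has size $<\kappa$. With this correction your recursion goes through exactly as you describe, and the rest of your argument matches the paper.
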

\begin{proof} Let $\A$ be given and $a\neq 0$ be in $A$. Let $\kappa$ be a regular cardinal $\geq max\{|\alpha|, |A|\}$.
Let $\B\in \TCA_{\kappa}$ be such that $\A=\Nr_{\alpha}\B$. Let $\langle (k_i,x_i): i\in \kappa \rangle$ be an enumeration of $\kappa\times B.$
Since  $\kappa$ is regular, we can define by recursion a
$\kappa$-termed sequence  $\langle u_i:i\in \kappa\rangle$
such that for all $i\in \kappa$ we have:
$u_i\in \kappa\sim
(\Delta a\cup \bigcup_{j\leq i}\Delta x_j\cup \{u_j:j<i\}).$
Let  $Y=\{a\}\cup \{-{\sf  c}_{k_i}x_i+{\sf s}_{u_i}^{k_i}x_i: i\in \kappa\}.$ Let $H$ be the filter generated by $Y$;
then $H$  is proper, take the maximal filter containing $H$ and $a$,
and define $\psi(b)=\{\bar{\tau}\in W: {\sf s}_{\tau}b\in F\}$ where  $b\in B$ and $W$ is as defined in the previous proof.
Then $\psi(a)\neq 0$, and $\psi$ establishes the 
representability of $\B$, hence of $\A$.
\end{proof}

\subsection{Omitting types}

Now we prove an omitting types theorem for $\TDc_{\alpha}$ and $\sf TLf_{\alpha}$ when $\alpha$ is a countable 
infinite ordinal; also generalizing the result in \cite{g} which addresses only topological locally finite algebras.
An omitting types theorem for Chang modal logic is not proved in \cite{g2}. 

The proof adopted herein, we find is much simpler than the proof
in \cite{g}; and it resorts to the Baire category theorem for compact Hausdorff spaces
as is often the case with `omitting types constructions' though they are rarely presented this way. 

The proof is similar to the proof of \cite[Theorem 3.2.4]{Sayed} having at our disposal 
lemma \ref{essence}.
We omit the parts of the proof that overlap with those in \cite{Sayed}.
But we still need some preparing to do.

Given $\A\in \TCA_{\alpha}$, $X\subseteq \A$ is called a {\it finitary type}, if 
$X\subseteq \Nr_n\A$ for some $n\in \omega$. It is non-principal if $\prod X=0$.

A {\it representation} of   $\A\in \sf TDc_{\alpha}$ is a non-zero  homomorphism 
$f:\A\to \B$ where $\B$ is a weak set algebra. If $\A$ is simple then $f$ is necessarily an isomorphism. 
$X\subseteq \A$ is {\it omitted}
by $f$ if $\bigcap_{x\in X} f(x)=\emptyset$, 
otherwise it is {\it realized} by $f$. 

Let $cov K$ be the least cardinal $\kappa$ such that the real line can be coverd by $\kappa$ no-where dense sets.
$cov K$ is a cardinal closely related to the number of omitting types and to 
independent set theoretic axioms like Martin's axiom restricted to countable Boolean algebras.
It also has topological re-incarnations, 
closely related to the Baire category theorem, witness \cite{Sayed} for a discussion of properties of this cardinal.

Let $\A$ be any Boolean algebra. The set of ultrafilters of $\A$ is denoted 
by $\mathfrak{U}(\A)$. The Stone topology  makes $\mathfrak{U}(\A)$ a compact Hausdorff space. 
We denote this space by $\A^*$. Recall that the Stone topology has as its basic open sets the sets $\{N_x:x\in A\}$ where
$$N_x=\{F\in\mathfrak{U}(\A):x\in F\}.$$

Let $x\in A$, $Y\subseteq A$ and suppose that $x=\sum Y.$ 
We say that an ultrafilter $F\in\mathfrak{U}(\A)$ \emph{preserves} 
$Y$ iff $x\in F$ implies that $y\in F$ for some $y\in Y$.

Now let $\A\in \sf TLf_\omega$. For each $i\in\omega$ and $x\in A$ let
$$\mathfrak{U}_{i,x}=\{F\in\mathfrak{U}(\A):F\mbox{ preserves }\{{\sf s}^i_jx:j\in\omega\}\}.$$
Then
\begin{align*}\mathfrak{U}_{i,x}&=\{F\in\mathfrak{U}(\A): {\sf c}_ix\in F\Rightarrow (\exists j\in\omega) {\sf s}^i_jx\in F\}\\
&=N_{-{\sf c}_ix}\cup\bigcup_{j<\omega}N_{{\sf s}^i_jx}.
\end{align*}

Let $$\mathcal{H}(\A)=\bigcap_{i\in\omega,x\in A}\mathfrak{U}_{i,x}(\A)\cap\bigcap_{i\neq j}N_{-{\sf d}_{ij}}.$$
It is clear that $\mathcal{H}(\A)$ is a $G_\delta$ set in $\A^*$.

For $F\in\mathfrak{U}(\A),$ let $$rep_F(x)=\{\tau\in{}^\omega\omega: {\sf s}_\tau^{\A} x\in F\},$$ for all $x\in A.$ 
Here for $\tau\in {}^{\omega}\omega$, ${\sf s}_{\tau}^{\A}x$ by definition is ${\sf s}_{\tau\upharpoonright \Delta x}^{\A}x$. 
The latter is well defined because  $|\Delta x|<\omega.$  

When $a\in F$, then $rep_F$ is a representation of $\A$ such that $rep_F(a)\neq 0$. 
Notice that here we do not have a notion of quotient involved here defined via the diagonal elements. 
Preservation of diagonal elements is guaranteed
by the fact that $-{\sf d}_{ij}\in F$. As before, it is easy to 
check that the cylindrifiers are preserved as well because the ultrafilter is 
Henkin.

The following theorem is due to S\'agi \cite{Sagi}, establishing a one to one correpondance between representations 
of locally finite cylindric algebras and Henkin 
ultrafilters. $\sf Cs_{\omega}^{reg}$ denotes the class of {\it regular set algebras}; a a set algebra with top element
$^{\alpha}U$ is such, if whenever $f, g\in {}^{\alpha}U,$ $f\upharpoonright \Delta x=g\upharpoonright \Delta x$, and  
$f\in X$ then $g\in X$. This reflects
the metalogical property that if two assignments agree on 
the free variables occuring in a formula then both satisfy the
formula or none does.

\begin{theorem}\label{th1}
If $F\in\mathcal{H}(\A)$, then $rep_F$ is a homomorphism from $\A$ onto an element 
of $\sf Lf_\omega\cap \sf Cs_\omega^{reg}$ 
with base $\omega$. Conversely, if $h$ is a homomorphism from $\A$ onto an element of 
$\sf Lf_\omega\cap \sf Cs_\omega^{reg}$ with base $\omega$, then there is a unique $F\in \mathcal{H}(\A)$ such that $h=rep_F.$
\end{theorem}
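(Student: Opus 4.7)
My plan is to handle the two directions separately, with the forward direction resting on verifying that $rep_F$ is a $\CA$-homomorphism into the full $\sf Cs_\omega$ with top ${}^\omega\omega$, and the converse on the observation that a homomorphism into a regular set algebra with base $\omega$ is completely determined by its value at the identity sequence.

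For the forward direction, fix $F \in \mathcal{H}(\A)$. That $rep_F$ preserves $\cdot$ and $-$ is immediate from $F$ being a Boolean ultrafilter and the classical fact \cite[1.11.11]{HMT1} that each finite substitution ${\sf s}_\tau$ is a Boolean endomorphism of $\A$. For diagonals, ${\sf s}_\tau {\sf d}_{ij} = {\sf d}_{\tau(i)\tau(j)}$; combined with $F \in \bigcap_{i\neq j} N_{-{\sf d}_{ij}}$ and ${\sf d}_{ii}=1 \in F$, this yields ${\sf d}_{kl} \in F \iff k = l$, hence $rep_F({\sf d}_{ij}) = \{\tau : \tau(i) = \tau(j)\}$. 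Regularity of the image follows from ${\sf s}_\tau x = {\sf s}_{\tau \upharpoonright \Delta x} x$, so $rep_F(x)$ depends only on $\tau \upharpoonright \Delta x$; local finiteness of the image is inherited from that of $\A$ since $\Delta(rep_F(x)) \subseteq \Delta x$. That the top element is $rep_F(1) = {}^\omega\omega$ gives base $\omega$.

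The nontrivial step is preservation of cylindrifiers. The inclusion $rep_F({\sf c}_i x) \supseteq {\sf c}_i rep_F(x)$ is monotonicity: if $\sigma \equiv_i \tau$ then ${\sf s}_\sigma x \leq {\sf s}_\tau {\sf c}_i x$. For the reverse, suppose ${\sf s}_\tau {\sf c}_i x \in F$. Using the standard commutation identities between substitutions and cylindrifiers in $\sf Lf_\omega$, pick a fresh index $k$ outside the finite set $\Delta x \cup \rng(\tau \upharpoonright \Delta x) \cup \{i\}$ and rewrite ${\sf s}_\tau {\sf c}_i x = {\sf c}_k y$ where $y = {\sf s}_\tau^{i\mapsto k} x$ is obtained by routing the $i$-slot through $k$. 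Then $F \in \mathfrak{U}_{k,y}$ supplies $j < \omega$ with ${\sf s}^k_j y \in F$, which unwinds to ${\sf s}_{\tau[i\mapsto j]} x \in F$, producing the required $\tau' \equiv_i \tau$ in $rep_F(x)$.

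For the converse, given a surjective homomorphism $h : \A \to \B$ with $\B \in \sf Lf_\omega \cap \sf Cs_\omega^{reg}$ of base $\omega$, define $F = \{x \in A : \Id \in h(x)\}$, where $\Id$ is the identity on $\omega$. Then $F$ is a Boolean ultrafilter; $-{\sf d}_{ij} \in F$ for $i \neq j$ since $\Id \notin h({\sf d}_{ij}) = \{\tau : \tau(i)=\tau(j)\}$; and if ${\sf c}_i x \in F$ then $\Id \in {\sf c}_i h(x)$, so some $\tau \equiv_i \Id$ lies in $h(x)$, whence setting $k = \tau(i)$ and invoking regularity of $\B$ gives $\Id \in h({\sf s}^i_k x)$, i.e.\ ${\sf s}^i_k x \in F$. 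Thus $F \in \mathcal{H}(\A)$. Finally, $h = rep_F$ because, for every $x \in A$ and $\tau \in {}^\omega\omega$, regularity and the semantics of substitution in set algebras give $\tau \in h(x) \iff \Id \in h({\sf s}_\tau x) \iff {\sf s}_\tau x \in F \iff \tau \in rep_F(x)$. Uniqueness is then automatic, since any $F'$ with $rep_{F'} = h$ must satisfy $F' = \{x : \Id \in rep_{F'}(x)\} = F$.

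\textbf{Expected main obstacle.} The delicate point is the cylindrifier preservation in the forward direction: one needs a fresh index $k$ that commutes cleanly with both $\tau$ and ${\sf c}_i$ so that the Henkin property of $F$, which is stated for each pair $(k, y)$ separately, can be brought to bear. Local finiteness of $\A$ provides the countable pool of available indices, but the substitution bookkeeping has to be executed with care so that the reformulated statement ${\sf c}_k y \in F$ matches the defining membership $F \in \mathfrak{U}_{k,y}$ exactly.
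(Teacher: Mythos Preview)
The paper does not supply its own proof of this theorem: it attributes the result to S\'agi and cites \cite{Sagi}, proceeding immediately to the next statement. So there is nothing in the paper to compare your argument against directly.

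That said, your proof is correct and follows what is in essence the canonical route for results of this type. The forward direction is handled operation by operation, with the Henkin condition in $\mathcal{H}(\A)$ doing exactly the work it is designed for in the cylindrifier step; your ``fresh index'' manoeuvre is the standard one (a version of it appears later in the paper, after the main text, in the analogous lemma for $\sf Dc_\omega$). The converse via $F=\{x:\Id\in h(x)\}$ together with regularity of the target is likewise the expected argument, and your uniqueness observation is immediate. One cosmetic point: in the cylindrifier computation you should make sure the substitution-composition convention you are using matches that of \cite{HMT1} so that ${\sf s}^k_j\,{\sf s}_{\tau[i\mapsto k]}x$ really collapses to ${\sf s}_{\tau[i\mapsto j]}x$ on the nose; this is purely bookkeeping and does not affect the validity of the idea.
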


The next theorem is due to Shelah, and will be used to show that in certain cases uncountably many non-principal types can be 
omitted.
\begin{theorem}\label{Shelah} Suppose that $T$ is a theory,
$|T|=\lambda$, $\lambda$ regular, then there exist models $\M_i: i<{}^{\lambda}2$, each of cardinality $\lambda$,
such that if $i(1)\neq i(2)< \chi$, $\bar{a}_{i(l)}\in M_{i(l)}$, $l=1,2,$, $\tp(\bar{a}_{l(1)})=\tp(\bar{a}_{l(2)})$,
then there are $p_i\subseteq \tp(\bar{a}_{l(i)}),$ $|p_i|<\lambda$ and $p_i\vdash \tp(\bar{a}_ {l(i)})$ 
($\tp(\bar{a})$ denotes the complete type realized by
the tuple $\bar{a}$).
\end{theorem}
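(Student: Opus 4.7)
The plan is to build the family $(\mathfrak{M}_\eta)_{\eta \in {}^\lambda 2}$ by a standard tree construction of length $\lambda$, and to read off the ``small generation'' clause from the level at which two branches first diverge. Concretely, I would construct a coherent system of elementary submodels $(\mathfrak{M}_s)_{s \in 2^{<\lambda}}$ of a sufficiently saturated monster model of $T$, satisfying: (i) $\mathfrak{M}_s \preceq \mathfrak{M}_t$ whenever $s$ is an initial segment of $t$; (ii) $|M_s| = |s| + |T| < \lambda$ for every $s \in 2^{<\lambda}$, using regularity of $\lambda$ at limit levels; (iii) for each $s$, $\mathfrak{M}_{s \frown 0}$ and $\mathfrak{M}_{s \frown 1}$ are two elementary extensions of $\mathfrak{M}_s$ obtained by realizing \emph{incompatible} extensions of some type over $M_s$, chosen by a bookkeeping device that enumerates, cofinally in $\lambda$, all finite tuples and all candidate ``dangerous'' complete types. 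Setting $\mathfrak{M}_\eta = \bigcup_{\alpha < \lambda} \mathfrak{M}_{\eta \restriction \alpha}$ for $\eta \in {}^\lambda 2$ then produces models of cardinality exactly $\lambda$, since $\lambda$ is regular and each successor step contributes $<\lambda$ new elements.

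To extract the small-generation clause, suppose $\eta_1 \neq \eta_2$ and pick tuples $\bar a_l \in M_{\eta_l}$ with $\mathrm{tp}(\bar a_1) = \mathrm{tp}(\bar a_2) =: q$. Let $\alpha < \lambda$ be the least level at which $\eta_1$ and $\eta_2$ differ, so that $\mathfrak{M}_{\eta_1 \restriction \alpha} = \mathfrak{M}_{\eta_2 \restriction \alpha} =: \mathfrak{N}$ and $|N| < \lambda$. The construction at stage $\alpha$ is to be arranged so that any tuple $\bar a_l$ realizing the same complete type on both sides of the branching must already lie in the stem $\mathfrak{N}$; equivalently, the incompatibility imposed at the two successor nodes ensures that no tuple outside $\mathfrak{N}$ can be a common realizer. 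Once both $\bar a_1, \bar a_2 \in N$, the type $q$ is generated by $p := \mathrm{tp}^{\mathfrak{N}}(\bar a_1)$, which has size at most $|N| + |T| < \lambda$, and one takes $p_i := p$ to obtain the desired $p_i \subseteq \mathrm{tp}(\bar a_{l(i)})$ with $|p_i| < \lambda$ and $p_i \vdash \mathrm{tp}(\bar a_{l(i)})$.

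The key combinatorial inputs are (a) the regularity of $\lambda$, ensuring $|M_s| < \lambda$ at every intermediate node and $|M_\eta| = \lambda$ for branches, and (b) a bookkeeping that, at each $s$ of length $\alpha$, anticipates every pair $(\bar c, r)$ with $\bar c \in M_s$ and $r$ a complete type extending $\mathrm{tp}(\bar c / M_s)$, so that if such an extension could eventually be realized in both successor cones it is already split at this stage. Because $|2^{<\lambda}| = 2^{<\lambda}$ and, at each node, only $\le |M_s|^{<\omega} + 2^{|T|}$ relevant objects need to be scheduled, the bookkeeping can be carried out using regularity of $\lambda$ together with the assumption $|T| = \lambda$.

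The main obstacle is to make the branching step genuinely force common realizers into the stem. One way is to choose, at stage $s$, a complete $1$-type $r_s$ over $M_s$ that has at least two distinct extensions $r_s^0, r_s^1$ to the relevant monster-level types, and to have $\mathfrak{M}_{s \frown i}$ be a prime (or at least atomic) model over $M_s$ together with a realization of $r_s^i$; any common type realized above the stem would then yield a single tuple realizing both $r_s^0$ and $r_s^1$, a contradiction. Verifying that this prime-over-$M_s$ step exists and is small enough in the general setting (rather than, say, under stability hypotheses) is the delicate point, and is where Shelah's machinery of splitting or dividing over small sets is invoked to make the argument uniform in $\lambda$.
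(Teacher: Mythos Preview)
The paper does not actually prove this theorem: its entire proof is the citation ``\cite[Theorem 5.16, Chapter IV]{Shelah}''. So there is no in-paper argument to compare your sketch against; what you have written is strictly more than the paper provides.

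That said, your sketch has the correct global architecture (a binary tree of elementary extensions of height $\lambda$, with branching controlled by a bookkeeping device, and regularity of $\lambda$ used to keep intermediate nodes small), but the pivotal step is misformulated. You claim that the construction can be arranged so that whenever $\bar a_1\in M_{\eta_1}$ and $\bar a_2\in M_{\eta_2}$ realize the same complete type $q$, both tuples already lie in the common stem $\mathfrak N$. This is too strong and cannot be enforced: if $q$ happens to be isolated by a single formula, every elementary extension of $\mathfrak N$ will contain new realizations of $q$ outside $\mathfrak N$, and no branching choice can prevent that. In other words, your intermediate claim fails precisely in the (harmless) case where $q$ is already small-generated, so the implication ``common realization $\Rightarrow$ tuples in the stem $\Rightarrow$ type small-generated'' breaks at the first arrow.

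The correct mechanism works on \emph{types}, not on tuples: one enumerates, via the bookkeeping, all complete types $q$ that are \emph{not} generated by any subset of size $<\lambda$, and at the scheduled node arranges that $q$ is realized down one successor branch and omitted down the other (this is where the real work lies, and where Shelah's machinery enters). Then if the same non-small-generated $q$ were realized in both $M_{\eta_1}$ and $M_{\eta_2}$, the stage at which $q$ was scheduled along the common stem yields a contradiction. Your final paragraph gestures at this, but the body of the argument should be rewritten so that the branching discriminates types rather than pinning down tuples.
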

\begin{proof} \cite[Theorem 5.16, Chapter IV]{Shelah}.
\end{proof}

We shall use the algebraic counterpart of the following corollary obtained by restricting Shelah's theorem to the countable case:
\begin{corollary}\label{Shelah2} For any countable theory, there is a
family of $< {}^{\omega}2$ countable models that overlap only on principal types.
\end{corollary}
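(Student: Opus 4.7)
The plan is to obtain Corollary \ref{Shelah2} as an immediate specialization of Theorem \ref{Shelah} to the case $\lambda=\omega$. A countable theory $T$ has $|T|=\omega$, and $\omega$ is a regular cardinal, so the hypotheses of Theorem \ref{Shelah} are satisfied. Applying the theorem yields a family $\{\M_i : i<{}^{\omega}2\}$ of models, each of cardinality $\omega$, i.e.\ a family of $2^{\omega}$ countable models indexed by $i<{}^{\omega}2$.

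The next step is to translate the conclusion of Shelah's theorem into the language of principal types. Suppose that $i_1\neq i_2$ are two distinct indices and that the tuples $\bar a_{i_1}\in\M_{i_1}$ and $\bar a_{i_2}\in\M_{i_2}$ realize the same complete type, say $q=\tp(\bar a_{i_1})=\tp(\bar a_{i_2})$. Then Theorem \ref{Shelah} supplies a subset $p\subseteq q$ with $|p|<\lambda=\omega$, hence $p$ is \emph{finite}, such that $p\vdash q$. Setting $\phi:=\bigwedge p$, the formula $\phi$ is a single first-order formula consistent with $T$ whose conjunction with $T$ entails every formula in $q$. Equivalently, $\phi$ isolates $q$ over $T$, which is precisely the definition of $q$ being a principal (isolated) type.

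Consequently, any complete type realized simultaneously in two distinct members of the family $\{\M_i:i<{}^{\omega}2\}$ is principal, which is the content of the statement that these $2^{\omega}$ countable models overlap only on principal types. No further construction is required; the corollary is a direct unpacking of the parameters in Shelah's theorem together with the equivalence ``implied by a finite subset'' $\Longleftrightarrow$ ``principal'' for complete types. The only genuine point to check is this last equivalence, but it is completely standard: a finite conjunction of formulas in a complete type that generates the type is exactly an isolating formula for the type in the sense of Tarski--Vaught.
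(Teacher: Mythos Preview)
Your proof is correct and is exactly the approach the paper has in mind: the paper gives no separate proof of the corollary at all, merely introducing it as ``obtained by restricting Shelah's theorem to the countable case.'' You have simply spelled out this restriction, noting that $|p|<\omega$ means $p$ is finite and hence $\bigwedge p$ isolates the type, which is precisely the content of ``principal.''
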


\begin{theorem}\label{infinite} 
\begin{enumarab}
\item Let $\A\in \TDc_{\omega}$ be  countable. Assume that 
$\kappa<covK$. Let $(\Gamma_i:i\in \kappa)$ be a set of non-principal types in $\A$. 
Then there is a topologocal  weak set algebra $(\B, I_i)_{i<\omega}$, that is,  $\B$ has top element a weak space, 
and a  homomorphism  $f:\A\to (\B, I_i)_{i<\omega},$ 
such that for all $i\in \kappa$, $\bigcap_{x\in X_i}f(x)=\emptyset$, and $f(a)\neq 0.$

\item If $\A\in \sf TLf_{\omega}$, and $(\Gamma_i: i\in \kappa)$ 
is a family of finitary non-principal types  
then there is a topogological  set algebra $(\B, I_i)_{i<\omega}$, that is, $\B$ has  top element a 
cartesian square,  and $\B\in {\sf Cs}_{\omega}^{reg}\cap \sf Lf_{\omega}$ together 
with a homomorphism  $f:\A\to (\B, I_i)_{i<\omega}$ 
such that $\bigcap_{x\in X_i}f(x)=\emptyset$, and $f(a)\neq 0.$ 
 
If the types are  maximal  
then $covK$ can be replaced by $2^{\omega}$, so that $<2^{\omega}$ types can be omitted.
\end{enumarab}
\end{theorem}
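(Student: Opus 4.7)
The plan is to run a Baire-category argument on the Stone space of the Boolean reduct of a suitable dilation of $\A$, exactly in the spirit of Theorem \ref{th1}, but where the set of ``good'' ultrafilters is cut down so as to kill all the prescribed non-principal types while still witnessing every existential cylindrifier. For part (1), I would first invoke Theorem \ref{dilate} to embed $\A\in \TDc_\omega$ as a neat subalgebra $\A\subseteq \Nr_\omega\B$ of some $\B\in \TCA_\beta$ where $\beta$ is a regular cardinal large enough to supply witnesses disjoint from the (countable) dimension-supports of the enumerated elements; the point is that in $\B$ we gain $\beta$ fresh indices in which to realize Henkin-type existential witnesses without clashing with any cylindrifier already acting on a given $x\in \B$.

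Next, working in the Stone space $\B^\ast$, I would define the analogue of $\mathcal{H}(\B)$: a $G_\delta$ set given as the intersection, over all $(k,x)\in\beta\times B$, of the open sets $N_{-\cyl{k}x}\cup\bigcup_{u} N_{\sub{k}{u}x}$ (where $u$ ranges over an appropriately chosen cofinal set of witnesses outside $\Delta x$), together with the clopen sets $N_{-\diag{i}{j}}$ for $i\neq j$. A standard density computation shows each of these open sets is dense, so $\mathcal{H}(\B)$ is comeager. Simultaneously, for each non-principal type $\Gamma_i\subseteq A$ (so $\prod \Gamma_i=0$ in $\A$, hence in $\B$ since $\A$ is a neat subalgebra and $\prod$ is preserved), the set $F_{\Gamma_i}=\{F\in \B^\ast: \Gamma_i\subseteq F\}=\bigcap_{x\in \Gamma_i} N_x$ is nowhere dense: if it were dense in some $N_b$ with $b\ne 0$, then $b\cdot x\ne 0$ for every $x\in \Gamma_i$, contradicting $\prod\Gamma_i=0$. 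Since $\kappa<\mathrm{cov}K$ and $\B^\ast$ is compact Hausdorff (in fact carries a Baire-category behavior governed by $\mathrm{cov}K$ via the usual transfer), the comeager set $\mathcal{H}(\B)\cap N_a\setminus \bigcup_{i<\kappa} F_{\Gamma_i}$ is non-empty. Pick any $F$ in it; $F$ is a Henkin ultrafilter containing $a$ and omitting every $\Gamma_i$.

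Now I invoke Lemma \ref{essence} verbatim: it produces a weak-space representation $f_F\colon \B\to (\wp(W),I_i)_{i<\beta}$, with $f_F(a)\neq 0$ and, critically, preserving the interior operators (here is precisely where we use the Henkin property of $F$ in the argument involving $O_{p,i}$). Restricting to $\A=\Nr_\omega\B$ and projecting the weak space to dimension $\omega$ yields the required topological weak set algebra representation $f\colon \A\to (\B_0, I_i)_{i<\omega}$; the fact that each $\Gamma_i\subseteq \A$ is omitted is immediate from $\Gamma_i\not\subseteq F$. For (2), the algebra is already locally finite so no dilation is needed (witnesses can be drawn from $\omega$ itself because $\Delta x$ is always finite); running the same Baire argument directly on $\A^\ast$ lands us in Theorem \ref{th1}, yielding a homomorphism into $\sf Lf_\omega\cap \sf Cs_\omega^{\mathrm{reg}}$ with base $\omega$, with the interior structure carried over as before.

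The maximal-type strengthening of part (2) uses Corollary \ref{Shelah2}. The idea is that each maximal non-principal type $\Gamma$ corresponds to a complete extension of the Lindenbaum--Tarski theory of $\A$; Shelah's theorem, applied to the countable theory behind $\A$, gives $2^\omega$ countable models pairwise overlapping only on principal types. Any particular family of $<2^\omega$ maximal non-principal types can therefore be avoided by at least one of these models, producing the required representation. The main obstacle in the whole argument is keeping the interior-operator semantics intact through the Baire-category construction: a priori a generic Henkin ultrafilter need not give a ${\sf TCA}$-homomorphism, only a $\sf CA$-homomorphism. This is exactly why one must arrange the density argument to fall inside $\mathcal{H}(\B)$ and then invoke the delicate bookkeeping with the basic sets $O_{p,i}$ of Lemma \ref{essence}; once that lemma is in hand the omitting-types step reduces to the (by now standard) categoricity-of-Baire-meets-Stone calculation, and the rest follows the pattern of \cite[Theorem 3.2.4]{Sayed}.
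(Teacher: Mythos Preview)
There is a genuine gap in your omitting argument. You claim that ``$\Gamma_i\not\subseteq F$'' suffices for $f$ to omit $\Gamma_i$, but recall how the representation coming out of Lemma~\ref{essence} is actually built: $f(x)=\{\bar\tau\in W: {\sf s}_{\tau}x\in F\}$. Thus $\bigcap_{x\in\Gamma_i}f(x)=\emptyset$ requires that for \emph{every} finite transformation $\tau$ there be some $x\in\Gamma_i$ with ${\sf s}_{\tau}x\notin F$; your condition $\Gamma_i\not\subseteq F$ handles only $\tau=Id$. What the paper does is avoid, for each $i<\kappa$ \emph{and each} $\tau\in V$, the closed set $\bold H_{i,\tau}=\bigcap_{x\in\Gamma_i}N_{{\sf s}_{\tau}x}$. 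Each such set is nowhere dense because ${\sf s}_{\tau}$ is a \emph{complete} Boolean endomorphism on $\A$ (\cite[1.11.12(iii)]{HMT1}), so $\prod{\sf s}_{\tau}\Gamma_i=0$. Since $V$ is countable, this adds only countably many nowhere dense sets per type, keeping the total below $covK$. This substitution step is the missing idea in your argument.

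Two secondary points. First, the dilation to $\B\in\TCA_\beta$ in part~(1) is an unnecessary detour: the paper works directly in $\A$, using the identity ${\sf c}_jx=\sum_{i\in\omega\setminus\Delta x}{\sf s}_i^j x$ valid in any $\TDc_\omega$, so Henkin witnesses already live in $\omega$. Your dilation also leans on ``$\prod\Gamma_i=0$ in $\A$, hence in $\B$ since $\prod$ is preserved,'' but neat embeddings of dimension-complemented algebras are not complete embeddings in general, so this step would itself need justification. Second, in part~(2) the same omission recurs, and there the paper uses ${\sf s}_\tau$ for \emph{all} $\tau\in{}^\omega\omega$ restricted to $\Nr_n\A$; this is exactly why the types must be assumed finitary (so that ${\sf s}_\tau\upharpoonright\Nr_n\A$ is a complete Boolean endomorphism). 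Your Shelah-based sketch for the maximal-type clause is along the right lines and matches the paper's counting argument.
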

\begin{proof} 
\begin{enumarab}
\item For the first part, we have by \cite[1.11.6]{HMT1} that 
\begin{equation}\label{t1}
\begin{split} (\forall j<\alpha)(\forall x\in A)({\sf c}_jx=\sum_{i\in \alpha\smallsetminus \Delta x}
{\sf s}_i^jx.)
\end{split}
\end{equation}
Now let $V$ be the weak space $^{\omega}\omega^{(Id)}=\{s\in {}^{\omega}\omega: |\{i\in \omega: s_i\neq i\}|<\omega\}$.
For each $\tau\in V$ for each $i\in \kappa$, let
$$X_{i,\tau}=\{{\sf s}_{\tau}x: x\in X_i\}.$$
Here ${\sf s}_{\tau}$ 
is the unary operation as defined in  \cite[1.11.9]{HMT1}.
For each $\tau\in V,$ ${\sf s}_{\tau}$ is a complete
boolean endomorphism on $\A$ by \cite[1.11.12(iii)]{HMT1}. 
It thus follows that 
\begin{equation}\label{t2}\begin{split}
(\forall\tau\in V)(\forall  i\in \kappa)\prod{}^{\A}X_{i,\tau}=0
\end{split}
\end{equation}
Let $S$ be the Stone space of the Boolean part of $\A$, and for $x\in \A$, let $N_x$ 
denote the clopen set consisting of all
Boolean ultrafilters that contain $x$.
Then from \ref{t1}, \ref{t2}, it follows that for $x\in \A,$ $j<\beta$, $i<\kappa$ and 
$\tau\in V$, the sets 
$$\bold G_{j,x}=N_{{\sf c}_jx}\setminus \bigcup_{i\notin \Delta x} N_{{\sf s}_i^jx}
\text { and } \bold H_{i,\tau}=\bigcap_{x\in X_i} N_{{\sf s}_{\bar{\tau}}x}$$
are closed nowhere dense sets in $S$.
Also each $\bold H_{i,\tau}$ is closed and nowhere 
dense.
Let $$\bold G=\bigcup_{j\in \beta}\bigcup_{x\in B}\bold G_{j,x}
\text { and }\bold H=\bigcup_{i\in \kappa}\bigcup_{\tau\in V}\bold H_{i,\tau.}$$
By properties of $covK$, it can be shown $\bold H$ is a countable collection of nowhere dense sets.
By the Baire Category theorem  for compact Hausdorff spaces, we get that $H(A)=S\sim \bold H\cup \bold G$ is dense in $S$.
Accordingly, let $F$ be an ultrafilter in $N_a\cap X$.
By the very choice of $F$, it follows that $a\in F$ and  we have the following 
\begin{equation}
\begin{split}
(\forall j<\beta)(\forall x\in B)({\sf c}_jx\in F\implies
(\exists j\notin \Delta x){\sf s}_j^ix\in F.)
\end{split}
\end{equation}
and 
\begin{equation}
\begin{split}
(\forall i<\kappa)(\forall \tau\in V)(\exists x\in X_i){\sf s}_{\tau}x\notin F. 
\end{split}
\end{equation}
Let $V={}^{\omega}\omega^{Id)}$ and let 
$W$ be the quotient of $V$ as defined above.
That is $W=V/\bar{E}$ where $\tau\bar{E}\sigma$ if ${\sf d}_{\tau(i), \sigma(i)}\in F$ 
for all $i\in \omega$.

Define $f$ as before by $$f(x)=\{\bar{\tau}\in W:  {\sf s}_{\tau}x\in F\}, \text { for } x\in \A.$$ 
and the interior operators  
for each $i<\alpha$ by
$$J_i: \wp(W)\to \wp (W)$$
by $$[x]\in J_iX\Longleftrightarrow \exists U\in {\cal B}(x_i/E\in U\subseteq \{u/E\in \alpha/E:[x]^i_{u/E}\in X\}),$$
where $X\subseteq W$; here $W$ and $E$ are as defined in lemmas, \ref{diagonal}, \ref{t} and $\cal B$
is the base for the topology on $U/E$ defined as in the proof of theorem \ref{essence}. 
Then by lemma \ref{essence} $f$
is a homomorphism 
such that $f(a)\neq 0$ and it can be easily checked  that  
$\bigcap f(X_i)=\emptyset$ for all $i\in \kappa$, 
hence the desired conclusion.

\item One proceeds exactly like in the previous item, but using, as indicated above, the fact that 
the operations ${\sf s}_{\tau}$ for {\it any} $\tau\in {}^\omega\omega$
which are definable in locally finite algebras, via ${\sf s}_{\tau}x= {\sf s}_{\tau\upharpoonright \Delta x}x$, for
any $x\in A$. Furthermore, ${\sf s}_{\tau}\upharpoonright \Nr_n\A$ is a complete
Boolean endomorphism, so that we guarantee that infimums are preserved and the sets 
$\bold H_{i,\tau}=\bigcap_{x\in X_i} N_{{\sf s}_{\bar{\tau}}x}$
remain no-where dense in the Stone topology. 

Now for the second part. Let  $\A\in \sf TLf_{\alpha}$, $\lambda <2^{\omega}$ and $\bold F=(X_i:i<\lambda)$ be a family 
of maximal non-principal finitary types, so that for each $i<\lambda$, there exists $n\in \omega$ such that 
$X_i\subseteq \Nr_n\A$, and $\prod X_i=0$; that is $X_i$ is a Boolean ultrafilter in $\Nr_n\A$.  
Then by theorem \ref{Shelah}, or rather its direct algebraic counterpart, 
there are $^{\omega}2$ representations such that 
if $X$ is an ultrafilter in $\Nr_n\A$ (some $n\in \omega)$) that is realized in two such 
representations, then $X$  necessarily principal. 
That is there exist a family of countable locally finite set algebras, each  with countable base, call it
$(\B_{j_i}: i<{}2^{\omega})$, and isomorphisms $f_i:\A\to \B_{j_i}$  
such that if $X$ is an ultrafilter in $\Nr_n\A$, for which there exists distinct $k, l\in {}2^{\omega}$
with $\bigcap f_l(X)\neq \emptyset$ and $\bigcap f_j(X)\neq \emptyset$, 
then $X$ is principal, so that  from corollary \ref{Shelah2} such representations overlap only
on maximal principal  types. By theorem \ref{th1}, there exists a family $(F_i: i< 2^{\omega})$
of Henkin ultrafilters 
such that $f_i=h_{F_i}$, and by theorem \ref{essence} we can assume that 
$h_{F_i}$ is a $\TCA_{\alpha}$ isomorphism as follows. Denote $F_i$ by $G$.
For $p\in \A$ and $i<\alpha$, let 
$O_{p,i}=\{k\in \alpha: {\sf s}_i^kI(i)p\in G\}$ 
and let ${\cal B}=\{O_{p,i} : i\in \alpha, p\in A\}.$
Then ${\cal B}$ 
is the base for a topology on $\alpha$ and the concrete interior operations are defined 
for each $i<\alpha$ via
$J_i: \wp(^{\alpha}\alpha)\to \wp (^{\alpha}\alpha)$
$$x\in J_iX\Longleftrightarrow \exists U\in {\cal B}(x_i\in U\subseteq \{u\in \alpha: x^i_{u}\in X\}),$$
where $X\subseteq W$. 

Assume, for contradiction,  that there is no representation (model) 
that omits $\bold F$. 
Then for all $i<{}2^{\omega}$,
there exists $F$ such that $F$ is realized in $\B_{j_i}$. Let $\psi:{}2^{\omega}\to \wp(\bold F)$, be defined by
$\psi(i)=\{F: F \text { is realized in  }\B_{j_i}\}$.  Then for all $i<2^{\omega}$, $\psi(i)\neq \emptyset$.
Furthermore, for $i\neq k$, $\psi(i)\cap \psi(k)=\emptyset,$ for if $F\in \psi(i)\cap \psi(k)$ then it will be realized in
$\B_{j_i}$ and $\B_{j_k}$, and so it will be principal.  This implies that 
$|\bold F|=2^{\omega}$ which is impossible.

\end{enumarab}

\end{proof}


From the omitting types theorem proved in theorem \ref{infinite}(2), 
one can infer the existence of an {\it atomic representation} of a {\it neatly atomic} $\sf TLf_{\omega}.$
In the coming two theorems $\A$ will be countable and simple, and we refer 
to an isomorphism $f:\A\to \B$, $\B$ a set algebra as 
a representation of $\A$.

\begin{definition}
\begin{enumarab}
\item $\A\in \sf TLf_{\omega}$ is {\it neatly atomic} if $\Nr_n\A$ is atomic for every $n\in \omega$.
\item A representation $f:\A\to \B$ with base $U$ is an {\it atomic representation} of $\A$ 
if $\bigcup \{f(x): x\in \At\Nr_n\A\}={}^{\omega}U$ for every
$n\in \omega$. 
\end{enumarab}
\end{definition}
\begin{theorem}\label{atomic0}
If $\A$ is a simple neatly atomic countable $\sf TLf_{\omega},$
then $\A$ has an atomic representation $f:\A\to \B$. 
Furthermore, if $(Y_i: i\in I)$ is a family of non-principal finitary types then 
$\bigcap_{x\in Y_i} f(x)=\emptyset.$
\end{theorem}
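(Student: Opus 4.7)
The plan is to exhibit, for each $n<\omega$, a concrete non-principal finitary type whose omission is precisely what atomicity of the representation asks for, and then to feed the union of these with $(Y_i:i\in I)$ into theorem \ref{infinite}(2). Specifically I set
\[
X_n=\{-x:x\in\At\Nr_n\A\}\subseteq\Nr_n\A,
\]
and the first order of business is to certify $\prod^{\A}X_n=0$ so that $X_n$ qualifies as a non-principal finitary type of $\A$. Atomicity of $\Nr_n\A$ gives this infimum equal to $0$ in $\Nr_n\A$, but one needs it in all of $\A$.

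To lift the infimum, take any $0\neq y\in A$. By local finiteness choose $k\geq n$ with $\Delta y\subseteq k$, and put $z={\sf c}_n{\sf c}_{n+1}\cdots{\sf c}_{k-1}y\in\Nr_n\A$. Since cylindrifiers preserve non-zero elements, $z\neq 0$, so atomicity of $\Nr_n\A$ supplies $x\in\At\Nr_n\A$ with $x\leq z$. If $y\leq-x$ then, because $\Delta(-x)=\Delta x\subseteq n$, each ${\sf c}_i$ with $i\in k\setminus n$ fixes $-x$; monotonicity yields $z={\sf c}_{(k\setminus n)}y\leq{\sf c}_{(k\setminus n)}(-x)=-x$, forcing $x\leq z\leq -x$ and so $x=0$, a contradiction. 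Thus $y$ is not a lower bound of $X_n$ in $\A$, proving $\prod^{\A}X_n=0$.

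Now every member of the family $\mathcal{F}=\{X_n:n<\omega\}\cup\{Y_i:i\in I\}$ is a non-principal finitary type, so (within the cardinality bound $<covK$ afforded by theorem \ref{infinite}(2), replaced by $<2^{\omega}$ when the $Y_i$'s are maximal) I apply that theorem with $a=1$. It produces a homomorphism $f:\A\to(\B,I_i)_{i<\omega}$ with $\B\in\sf Cs^{reg}_{\omega}\cap \sf Lf_{\omega}$, $f(1)\neq 0$, and $\bigcap_{x\in Z}f(x)=\emptyset$ for every $Z\in\mathcal{F}$. Because $\A$ is simple and $f(1)\neq 0$, the kernel of $f$ is the trivial congruence, so $f$ is injective: a faithful representation, with the ``furthermore'' clause automatic since each $Y_i$ lies in $\mathcal{F}$.

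Atomicity of $f$ then falls straight out of the omission of the $X_n$: for each $s$ in the top of $\B$ and each $n<\omega$, one has $s\notin\bigcap_{x\in\At\Nr_n\A}f(-x)=\bigcap_{x\in\At\Nr_n\A}(-f(x))$, so $s\in f(x)$ for some $x\in\At\Nr_n\A$; hence $\bigcup\{f(x):x\in\At\Nr_n\A\}$ is the full top, as required. The principal obstacle is the infimum transfer in the second paragraph---the rest is routine bookkeeping once theorem \ref{infinite}(2) is wired up.
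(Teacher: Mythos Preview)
Your construction of the atomic representation is correct and matches the paper: set $X_n$ to be the co-atoms of $\Nr_n\A$ and apply theorem~\ref{infinite}(2). (Your verification that $\prod X_n=0$ holds in $\A$, not merely in $\Nr_n\A$, is more than strictly required, since the proof of theorem~\ref{infinite}(2) only uses that ${\sf s}_\tau\upharpoonright\Nr_n\A$ is a complete Boolean endomorphism, hence preserves the infimum computed in $\Nr_n\A$.)

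The ``furthermore'' clause, however, is where your argument falls short of the statement. The theorem asserts that the \emph{single} atomic representation $f$ just built omits \emph{every} non-principal finitary type, with no restriction on the index set $I$. By feeding the $Y_i$'s into theorem~\ref{infinite}(2) alongside the $X_n$'s, you (a) make $f$ depend on the particular family $(Y_i:i\in I)$ handed to you, and (b) are forced to impose the bound $|I|<covK$---both of which you acknowledge, but neither of which the theorem permits. The paper's route avoids this entirely: once $f$ is atomic, omission of any non-principal $Y_i\subseteq\Nr_n\A$ follows \emph{directly}. With $Z_i=\{-y:y\in Y_i\}$ one has $\sum Z_i=1$ in $\Nr_n\A$, so every atom $x\in\At\Nr_n\A$ satisfies $x\leq z$ for some $z\in Z_i$; atomicity of $f$ then gives
\[
{}^{\omega}U=\bigcup\{f(x):x\in\At\Nr_n\A\}\subseteq\bigcup_{z\in Z_i}f(z),
\]
whence $\bigcap_{y\in Y_i}f(y)=\emptyset$. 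This uses nothing beyond what the first part already secured, and places no constraint on $I$ whatsoever.
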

\begin{proof} The first part follows from the omitting types theorem by taking $X_n$ to 
be the set of co-atoms of $\Nr_n\A$ and then finding
a representation that omits those.
For the second part. 
Let $i\in I$. Assume that $X_i\subseteq \Nr_n\A$. Let $Z_i=\{-y: y\in Y_i\}$. Then $\sum Z_i=1$. So for any atom 
$x\in \Nr_n\A$, we have $x\cdot \sum Z_i=x\neq 0$.
Hence there exists $z\in Z_i$, such that
$x\cdot z\neq 0$. But $x$ is an atom, hence $x\cdot z=x$ and so $x\leq z$. 
We have shown that for every atom $x\in \Nr_n\A$, there exists $z\in Z_i$ such that $x\leq z$.
It follows immediately,  that
$^{\omega}U=\bigcup{f(x): x\in \At\Nr_n\A}\}\leq \bigcup_{z\in Z_i} f(z)$, 
and so, $\bigcap_{y\in Y_i} f(y)=\emptyset,$
and we are done.
\end{proof}
\section{Notions of Representability}

$\sf TCs_{\alpha}$ denotes the class of set algebras and ${\sf TWs}_{\alpha}$ 
denotes the class of weak set algebra. 
Recall that $\A\in \sf TCs_{\alpha}$ if it has top element $^{\alpha}U$, $U$ carries a topology
and the interior operators are defined as in definition \ref{interior},  
while $\A$ is in $\sf TWs_{\alpha}$ if $\A$ has unit a weak space $^{\alpha}U^{(p)}$ and
the interior operator also defined as in definition \ref{interior}.  For $\alpha<\omega$, 
$\sf TCs_{\alpha}=\sf TWs_{\alpha}.$ 

We choose to define the class of representable algebras as follows 
(we will see that there are other possible equivalent 
definitions when $\alpha$ is infinite, namely, to take set algebras with 
square units. For the finite dimensional case this is obviously equivalent).

\begin{definition}
$\A\in \TCA_{\alpha}$ is {\it representable} if it is isomorphic to a
subdirect product of weak set algebras of dimension $\alpha$.
\end{definition}
In the next theorem to allow uniform treatment of the finite and infinite dimensional case, we always consider {\it weak set algebras}, which is
the same as set algebras for the finite dimension case. In this case we have for any $p\in {}^{\alpha}U$, $^{\alpha}U={}^{\alpha}U^{(p)}$. 
We also write for any ordinal $\alpha$, $\alpha+\omega$ which is just
$\omega$ when $\alpha$ is finite.

\begin{theorem}\label{neat}
\begin{enumarab}
\item For $\alpha\geq \omega$ if $\A\in \sf TDc_{\alpha}$ and $\Rd_{ca}\A$ is representable, 
then $\A$ is representable, too.  
\item For any ordinal $\alpha$, $\sf RTCA_{\alpha}={\bf S}\Nr_{\alpha}\TCA_{\alpha+\omega}.$
\item For any pair of infinite ordinals $\alpha<\beta$, ${\bf S}\Nr_{\alpha}\TCA_{\beta}$ is a variety.
In particular, $\sf RTCA_{\alpha}$ is a variety. Furthermore, $\sf RTCA_{\alpha}=\bigcap_{k\in \omega} S\Nr_{\alpha}\TCA_{\alpha+k}.$
\item $\sf RTCA_{\alpha}={\bf SP}{\sf TWs}_{\alpha}$.
In particular, ${\bf SP}{\sf TWs}_{\alpha}$ is closed under $\bold H$.
\item ${\bf HSP}{\sf TCs}_{\alpha}\subseteq \sf RTCA_{\alpha}$.
\item For finite $\alpha$, $\sf TRCA_{\alpha}$ is a discriminator variety, that is not completely aditive, hence is not conjugated.
\item Assume that $\alpha$ is an infinite ordinal. Then for any  class $\sf K$, such that ${\sf Lf}_{\alpha}\subseteq \sf K\subseteq 
\sf RTCA_{\alpha}$, 
we have ${\bf SUp}\sf K={\sf RTCA}_{\alpha}.$ In particular, 
${\bf HSP}{\sf K}={\sf RTCA}_{\alpha}.$
\item ${\sf RTCA}_{\alpha}={\bf HSP}{\sf TCs}_{\alpha}.$
\item For $\alpha>2,$ $\sf RTCA_{\alpha}$ cannot be axiomatized by a set of universal formulas contaning only finitely many variables.
\item For any pair of ordinals $1<\alpha<\beta$ the class of neat reducts $\Nr_{\alpha}\sf TCA_{\beta}$ 
is not elementary.
\end{enumarab}
\end{theorem}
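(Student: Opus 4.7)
The plan is to reduce to the analogous non-elementarity of $\Nr_{\alpha}\CA_{\beta}$ (the classical purely cylindric case) by means of a ``discrete topology'' functor $\A\mapsto \A^d$ from $\CA_{\alpha}$ into $\TCA_{\alpha}$. This exploits the paper's own observation that giving the base of a set algebra the discrete topology transfers negative results for $\CA_{\alpha}$ to $\TCA_{\alpha}$.

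For any $\A\in \CA_{\alpha}$, define $\A^d\in \TCA_{\alpha}$ to have the same cylindric reduct as $\A$ and interior operators $I_i^{\A^d}=\mathrm{id}_{A}$ for every $i<\alpha$. All axioms of Definition \ref{topology} are trivially verified when each $I_i$ is the identity; axioms (6) and (7) reduce respectively to ${\sf c}_k x=x$ for $k\notin \Delta x$ and ${\sf s}_j^i x={\sf s}_j^i x$. Since $I_i=\mathrm{id}$ holds uniformly in every algebra of the form $\A^d$, any $\TCA$-formula $\phi$ is semantically equivalent in $\A^d$ to the $\CA$-formula $\phi^{\ast}$ obtained by deleting every occurrence of $I_i$. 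Hence $\A\equiv_{\CA}\B$ implies $\A^d\equiv_{\TCA}\B^d$.

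Next I would prove the lifting lemma: $\A\in \Nr_{\alpha}\CA_{\beta}$ if and only if $\A^d\in \Nr_{\alpha}\TCA_{\beta}$. For the forward direction, if $\A=\Nr_{\alpha}\C$ with $\C\in \CA_{\beta}$, then $\C^d\in \TCA_{\beta}$ and $\Nr_{\alpha}\C^d=(\Nr_{\alpha}\C)^d=\A^d$, since identity interior operators on $\C$ restrict to identity operators on $\Nr_{\alpha}\C$. For the reverse, if $\A^d=\Nr_{\alpha}\D$ with $\D\in \TCA_{\beta}$, then $\Rd_{ca}\D\in \CA_{\beta}$ and $\Nr_{\alpha}\Rd_{ca}\D=\Rd_{ca}\Nr_{\alpha}\D=\Rd_{ca}\A^d=\A$. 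Combining with the preservation of $\equiv$ from the previous paragraph, any pair $\A\equiv \B$ witnessing non-elementarity of $\Nr_{\alpha}\CA_{\beta}$ yields a pair $\A^d\equiv \B^d$ witnessing non-elementarity of $\Nr_{\alpha}\TCA_{\beta}$.

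The main obstacle is therefore the cylindric case itself. For this I would construct, for $1<\alpha<\beta$, a countable atomic $\A\in \CA_{\alpha}$ with $\A\notin \Nr_{\alpha}\CA_{\beta}$ such that some non-principal ultrapower $\A^{\omega}/U$ lies in $\Nr_{\alpha}\CA_{\beta}$; since $\A\equiv \A^{\omega}/U$, this defeats closure of $\Nr_{\alpha}\CA_{\beta}$ under elementary equivalence. The idea is to design the atom structure of $\A$ via a rainbow or Monk-style construction (in the spirit of the algebras $\B^{r}$ in Theorem \ref{new}) so that \pe\ wins the neat-embedding \ef\ game on $\A$ of every finite length but loses the $\omega$-length game, blocking $\A$ from extending to a full $\beta$-dilation; the $\omega_{1}$-saturated ultrapower, by \Los's theorem, still satisfies every finite-length game and has enough saturation to assemble the infinitely many required witnesses into a coherent $\beta$-dilation. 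Transferring back by $(-)^d$ then completes the proof.
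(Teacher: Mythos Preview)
Your reduction via the discrete-topology functor $\A\mapsto \A^d$ is exactly the paper's strategy, and your lifting lemma ($\A\in\Nr_{\alpha}\CA_{\beta}$ iff $\A^d\in\Nr_{\alpha}\TCA_{\beta}$) together with the observation that $\A\equiv_{\CA}\B\Rightarrow \A^d\equiv_{\TCA}\B^d$ is a cleaner packaging of what the paper does somewhat ad hoc. The paper, rather than proving the forward direction of the lifting by the one-line $\Nr_{\alpha}\C^d=(\Nr_{\alpha}\C)^d$ that you give, detours through a representation of a $\TDc_{\alpha+\omega}$ dilation and endows its base with the discrete topology; your argument is shorter and avoids representability altogether.

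Where you diverge from the paper is in the source of the $\CA$ witnesses. The paper simply imports from \cite{IGPL2} a pair of concrete weak set algebras $\B\subseteq\A$ sharing the same unit, with $\B\equiv\A$, $\A\in\Nr_{\alpha}\CA_{\alpha+\omega}$, and $\B\notin\Nr_{\alpha}\CA_{\alpha+1}$. You instead propose to build a single $\A\notin\Nr_{\alpha}\CA_{\beta}$ whose $\omega_1$-saturated ultrapower lies in $\Nr_{\alpha}\CA_{\beta}$, via a rainbow/Monk atom structure calibrated by finite-versus-$\omega$ neat-embedding games. That line is viable in principle and appears in related literature, but as written it is only a sketch: the game characterization of membership in $\Nr_{\alpha}\CA_{\beta}$ (as opposed to $S\Nr_{\alpha}\CA_{\beta}$ or $S_c\Nr_{\alpha}\CA_{\beta}$) is delicate, and the passage from ``\pe\ wins every finite game'' to ``the ultrapower is a full neat reduct'' needs an explicit saturation argument you have not supplied. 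The paper sidesteps all of this by citation, which buys a uniform result for every $1<\alpha<\beta$ without any game machinery.
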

\begin{proof} For the first item.  Assume for simplicity that $\A$ is simple (has no proper congruences) and 
that $h:\Rd_{ca}\A\to \B$ is an isomorphism into a weak set
algebra $\B$. The general case follows easily from this special case. 
Then theorem \ref{th1} provides a Henkin ultrafilter $F$ such that $h=rep_F.$
The interior operators are then represented  as in the proof of theorem \ref{essence}. The same $h$ establishes the required 
isomorphism.
For the other items the proofs for the $\sf CA$ case lift without much difficulty \cite[Theorems 2.6.32, 2.6.35, 2.6.52]{HMT1} and
\cite{IGPL, Andreka} 
taking into account lemma \ref{box}. We give a sample. That for any pair of ordinals $\alpha<\beta$, 
$S\Nr_{\alpha}\sf TCA_{\beta}$ is a variety is exactly like the $\CA$ case. 
To show that ${\sf RTCA}_{\alpha}\subseteq S\Nr_{\alpha}\TCA_{\alpha+\omega}$,
it suffices to consider algebras in ${\sf TWs}_{\alpha}$, 
since $S\Nr_{\alpha}\TCA_{\alpha+\omega}$ is closed
under ${\bf SP}$.  Let  $\A\in \sf TWs_{\alpha}$ and assume that $\A$ has top element 
$^{\alpha}U^{(p)}$. Let $\beta=\alpha+\omega$ and let $p^*\in {}^{\beta}U$ be a fixed sequence
such that $p^*\upharpoonright \alpha=p$. Let $\C$ be the $\TCA_{\beta}$ with top element
$^{\beta}U^{(p^*)}$; cylindrifiers and diagonal elements  are defined the usual way and the interior operators induced
by the topology on $U$. Define $\psi: \A\to \C$
via $$X\mapsto \{s\in {}^{\beta}U^{(p^*)}: s\upharpoonright \alpha\in X\}.$$
Then by lemma \ref{box}, $\psi$ is a homomorphism, further it is injective, and as easily
checked, $\psi$ is a neat embedding that is  
$\psi(\A)\subseteq \Nr_{\alpha}\C$.
Maybe the hardest part is to show that if $\A\in S\Nr_{\alpha}\sf TCA_{\alpha+\omega}$
then it is representable. But this follows from the fact that we can assume that $\A\subseteq \Nr_{\alpha}\B$, where
$\B\in \Dc_{\alpha+\omega}$, and then using theorem \ref{representability} baring in mind that a neat reduct of a representable algebra is 
representable. 

For item (7), let $k$ be finite$>1$. Take $\A$ obtained 
by splitting an atom in a set algebra into $k+1$ atoms as done in \cite{Andreka}, expanded by interior operators defined
as identity operator. $\A$ will have a non representable cylindric reduct but its $k$ generated subalgebras will 
be representable. Expand such representations  by the identity functions
and stimulate their representation using the 
discrete topology on the base. 

Finally for non elementarity, in \cite{IGPL}, for any ordinal $\alpha>1$ two weak cylindric set set algebras having top element $V$ are constructed
such that $\B\subseteq \A$, $\B\notin \Nr_{\alpha}\CA_{\alpha+1}$, $\B\equiv \A$  and $\A\in \Nr_{\alpha}\CA_{\alpha+\omega}$.
Give the base of $\B$ the discrete topology, then clearly the resulting expanded structure, call it $\B^*$, by the induced (identity) 
interior operators is still not in 
$\Nr_{\alpha}\TCA_{\alpha+1}$. Now we want to expand the algebra $\A$ to an algebra in $\sf TCA_{\alpha}$
in such a way to preserve elementary equivalence, so we do not have a choice but to
expand it with the identity interior operations, call the resulting algebra $\A^*$. But we also want an $\alpha+\omega$ 
dimensional topological cylindric algebra such that
$\A^*$ neatly embeds into $\D$ and {\it exhausts} its $\alpha$ dimensional element, that is the neat embeding is
onto $\Nr_{\alpha}\D$. As above we can assume that $\A=\Nr_{\alpha}\C$, $\C\in \sf TDc_{\alpha+\omega}$. Give
the base of any representation $\D$ say, of $\C$ the discrete topology forming $\D^*$. 
Then $\A^*=\Nr_{\alpha}\D^*$, $\B^*\notin \Nr_{\alpha}\sf TCA_{\alpha+1}$  
and $\A^*\equiv \B^*$. 
\end{proof}

\subsection{Rainbows, atom-canonicity}

In this section we deal only with finite dimensional algebras.
Throughout, unless otherwise explicity indicated, $n$ wil be a finite ordinal $>2$. 

Notions like {\it \d\ completions, and atom-canonicity} \cite{HHbook} 
are problematic for ${\sf TRCA}_{\alpha}$ 
because the interior operators are {\it not} completely additive. 
However, in some cases the interior operators  can turn out  completey additive (e.g when they are equal to the identity map). 
If $\A$ is such an  {\it atomic} algebra, that is an algebra whose interior operators are completely additive,
then the \d\ completion exists and it is the {\it complex algebra of its atom structure}, in symbols $\Cm\At\A$.
This prompts the following definition:

\begin{definition} A variety $V$ of Boolean algebras with operators is {\it atom-canonial}, 
if for every completely additive atomic $\A\in V$, its \d\ completion, namely,
$\Cm\At\A$ is also in $V.$
\end{definition}

So using constructions for cylindric algebras 
one can construct such a representable algebra whose \d\ completion is {\it not} representable, a task done by Hodkinson for cylindric algebras
\cite{Hodkinson}, but now we considerably sharpen Hodkinson's result by passing to the $\Sc$  reducts of certain topological cylindric algebras 
to be constructed.  For $\A=(A, +, \cdot, -, {\sf c}_i, {\sf d}_{ij}, I_i)_{i,j\in \alpha}\in \TCA_{\alpha}$ its $\Sc$ reduct denoted by $\Rd_{sc}\A$
is the algebra $\A=(A, +, \cdot , -, {\sf c}_i, {\sf s}_i^j)_{i,j\in \alpha}$ where ${\sf s}_i^jx={\sf c}_j(x \cdot {\sf d}_{ij})$ for $i\neq j$
and ${\sf s}_i^ix=x$. Such algebras are called
{\it Pinter's substitution algebras}; they are 
also diagonal-free  
reducts of $\sf CA$s. Here reducts are taken in the generalized sense, so that the opeartions of the reduct are {\it term definable} in the 
expansion.
 
We emphasize that the next result {\it cannot} be obtained by lifting the relation algebra case  \cite[ lemmas 17.32, 17.34, 17.35, 17.36]{HHbook}
to cylindric algebras 
using  Hodkinson's construction in \cite{AU}. Hodkinson constructs from 
every atomic relation algebra an atomic cylindric algebra of dimension $n$, for any $n\geq 3$, 
but the relation algebras {\it does not} embed into the $\sf Ra$ reduct of the constructed
cylindric algebra when $n\geq 6$. If it did, then the $\sf RA$ result would lift as indeed is the case with 
$n=3$. We instead start from scratch. We use a  rainbow cylindric algebra.

In \cite{HHbook2} the rainbow cylindric algebra of dimension $n$ on a graph $\Gamma$ is denoted by $\R(\Gamma)$.
We consider $R(\Gamma)$ to be in $\sf TCA_n$ be expanding its signature with $n$ operators
each interpreted as the identity map. In what follows we consider $\Gamma$ to be the indices of the reds, and for a complete irreflexive graph
$\G$, by $\TCA_{\G, \Gamma}$
we mean the rainbow topological cylindric algebra $\R(\Gamma)$  of dimension $n$,
where ${\sf G}=\{\g_i: 1\leq i<n-1\}\cup \{\g_0^i: i\in \G\}$.

More generally, we consider a rainbow topological cylindric algebra based on relational structures 
$A, B$, to be the rainbow algebra
with signature the binary
colours (binary relation symbols)
$\{\r_{ij}: i,j\in B\}\cup \{\w_i: i<n-1\}\cup \{\g_i:1\leq i<n-1\}\cup \{\g_0^i : i\in A\}$ and $n-1$
shades of yellow ($n-1$ ary relation symbols)
$\{\y_S: S\subseteq_{\omega} A, \text { or } S=A\}.$

We look at models of the rainbow theorem as coloured graphs \cite{HH}.
This class is denoted by ${\sf CRG}.$

A coloured graph is a graph such that each of its edges is labelled by the colours in the above first three items,
greens, whites or reds, and some $n-1$ hyperedges are also
labelled by the shades of yellow.
Certain coloured graphs will deserve special attention.

\begin{definition}
Let $i\in A$, and let $M$ be a coloured graph  consisting of $n$ nodes
$x_0,\ldots,  x_{n-2}, z$. We call $M$ an {\it $i$ - cone} if $M(x_0, z)=\g_0^i$
and for every $1\leq j\leq n-2$, $M(x_j, z)=\g_j$,
and no other edge of $M$
is coloured green.
$(x_0,\ldots, x_{n-2})$
is called {\it the center of the cone}, $z$ {\it the apex of the cone}
and {\it $i$ the tint of the cone.}
\end{definition}

The class of coloured graphs $\sf CRG$ are

\begin{itemize}

\item $M$ is a complete graph.

\item $M$ contains no triangles (called forbidden triples)
of the following types:
\vspace{-.2in}
\begin{eqnarray}
&&\nonumber\\
(\g, \g^{'}, \g^{*}), (\g_i, \g_{i}, \w_i),
&&\mbox{any }1\leq i< n-1\;  \\
(\g^j_0, \g^k_0, \w_0)&&\mbox{ any } j, k\in A\\
\label{forb:match}(\r_{ij}, \r_{j'k'}, \r_{i^*k^*})&& i,j,j',k',i^*, k^*\in B,\\ \mbox{unless }i=i^*,\; j=j'\mbox{ and }k'=k^*
\end{eqnarray}
and no other triple of atoms is forbidden.

\item If $a_0,\ldots,   a_{n-2}\in M$ are distinct, and no edge $(a_i, a_j)$ $i<j<n$
is coloured green, then the sequence $(a_0, \ldots, a_{n-2})$
is coloured a unique shade of yellow.
No other $(n-1)$ tuples are coloured shades of yellow.

\item If $D=\set{d_0,\ldots,  d_{n-2}, \delta}\subseteq M$ and
$M\upharpoonright D$ is an $i$ cone with apex $\delta$, inducing the order
$d_0,\ldots,  d_{n-2}$ on its base, and the tuple
$(d_0,\ldots, d_{n-2})$ is coloured by a unique shade
$\y_S$ then $i\in S.$
\end{itemize}

One then can define a polyadic equality atom structure
of dimension $n$ from the class $\sf CRG$. It is a {\it rainbow atom structure}.  Rainbow atom structures  are what Hirsch and Hodkinson call
atom structures built from a class of models \cite{HHbook2}.
Our models are, according to the original more traditional view \cite{HH}
coloured graphs. So let $\sf CRG$ be the class of coloured graphs as defined above.
Let $$\At=\{a:n \to M, M\in \sf CRG: \text { $a$ is surjective }\}.$$
We write $M_a$ for the element of $\At$ for which
$a:n\to M$ is a surjection.
Let $a, b\in \At$ define the
following equivalence relation: $a \sim b$ if and only if
\begin{itemize}
\item $a(i)=a(j)\Longleftrightarrow b(i)=b(j),$

\item $M_a(a(i), a(j))=M_b(b(i), b(j))$ whenever defined,

\item $M_a(a(k_0),\dots, a(k_{n-2}))=M_b(b(k_0),\ldots, b(k_{n-2}))$ whenever
defined.
\end{itemize}
Let $\At$ be the set of equivalences classes. Then define
$$[a]\in E_{ij} \text { iff } a(i)=a(j).$$
$$[a]T_i[b] \text { iff }a\upharpoonright n\smallsetminus \{i\}=b\upharpoonright n\smallsetminus \{i\}.$$
This, as easily checked, defines a $\sf CA_n$
atom structure. The complex algebra of this atom structure is denote by ${\sf CA}_{A, B}$ where $A$ is the greens and 
$B$ is the reds.
For interior operators define
$$[a]I_i [b]\text { iff }a\sim b;$$
this defines an atom structure of  a $\TCA_n$, 
we denote the resulting complex algebra  $\Cm\At$ 
by ${\sf TCA}_{A,B}$.

Consider the following two games on coloured graphs, each with $\omega$ rounds, and limited number of pebbles
$m>n$. They are translations of $\omega$ atomic games played on atomic networks
of a rainbow algebra using a limited number of nodes $m$.
Both games offer \pa\ only one move, namely, a cylindrifier move.

From the graph game perspective both games \cite[p.27-29]{HH} build a nested sequence $M_0\subseteq M_1\subseteq \ldots $.
of coloured graphs.

First game $G^m$.
\pa\ picks a graph $M_0\in \sf CRG$ with $M_0\subseteq m$ and
$\exists$ makes no response
to this move. In a subsequent round, let the last graph built be $M_i$.
\pa\ picks
\begin{itemize}
\item a graph $\Phi\in \sf CRG$ with $|\Phi|=n,$
\item a single node $k\in \Phi,$
\item a coloured graph embedding $\theta:\Phi\smallsetminus \{k\}\to M_i.$
Let $F=\phi\smallsetminus \{k\}$. Then $F$ is called a face.
\pe\ must respond by amalgamating
$M_i$ and $\Phi$ with the embedding $\theta$. In other words she has to define a
graph $M_{i+1}\in C$ and embeddings $\lambda:M_i\to M_{i+1}$
$\mu:\phi \to M_{i+1}$, such that $\lambda\circ \theta=\mu\upharpoonright F.$
\end{itemize}
$F^m$ is like $G^m$, but \pa\ is allowed to resuse nodes.

$F^m$ has an equivalent formulation on atomic networks of atomic algebras.

Let $\delta$ be a map. Then $\delta[i\to d]$ is defined as follows. $\delta[i\to d](x)=\delta(x)$
if $x\neq i$ and $\delta[i\to d](i)=d$. We write $\delta_i^j$ for $\delta[i\to \delta_j]$.

\begin{definition}
Let $2< n<\omega.$ Let $\C$ be an atomic ${\sf CA}_{n}$.
An \emph{atomic  network} over $\C$ is a map
$$N: {}^{n}\Delta\to \At\C,$$
where $\Delta$ is a non-empty set called a set of nodes,
such that the following hold for each $i,j<n$, $\delta\in {}^{n}\Delta$
and $d\in \Delta$:
\begin{itemize}
\item $N(\delta^i_j)\leq {\sf d}_{ij}$
\item $N(\delta[i\to d])\leq {\sf c}_iN(\delta)$

\end{itemize}
\end{definition}

\begin{definition}\label{def:games}
Let $2\leq n<\omega$. For any ${\sf Sc}_n$
atom structure $\alpha$ and $n<m\leq
\omega$, we define a two-player game $F^m(\alpha)$,
each with $\omega$ rounds.

Let $m\leq \omega$.
In a play of $F^m(\alpha)$ the two players construct a sequence of
networks $N_0, N_1,\ldots$ where $\nodes(N_i)$ is a finite subset of
$m=\set{j:j<m}$, for each $i$.

In the initial round of this game \pa\
picks any atom $a\in\alpha$ and \pe\ must play a finite network $N_0$ with
$\nodes(N_0)\subseteq  m$,
such that $N_0(\bar{d}) = a$
for some $\bar{d}\in{}^{n}\nodes(N_0)$.

In a subsequent round of a play of $F^m(\alpha)$, \pa\ can pick a
previously played network $N$ an index $l<n$, a {\it face}
$F=\langle f_0,\ldots, f_{n-2} \rangle \in{}^{n-2}\nodes(N),\; k\in
m\sim\set{f_0,\ldots, f_{n-2}}$, and an atom $b\in\alpha$ such that
$$b\leq {\sf c}_lN(f_0,\ldots, f_i, x,\ldots, f_{n-2}).$$
The choice of $x$ here is arbitrary,
as the second part of the definition of an atomic network together with the fact
that $\cyl i(\cyl i x)=\cyl ix$ ensures that the right hand side does not depend on $x$.

This move is called a \emph{cylindrifier move} and is denoted
$$(N, \langle f_0, \ldots, f_{n-2}\rangle, k, b, l)$$
or simply by $(N, F,k, b, l)$.
In order to make a legal response, \pe\ must play a
network $M\supseteq N$ such that
$M(f_0,\ldots, f_{i-1}, k, f_{i+1},\ldots f_{n-2}))=b$
and $\nodes(M)=\nodes(N)\cup\set k$.

\pe\ wins $F^m(\alpha)$ if she responds with a legal move in each of the
$\omega$ rounds.  If she fails to make a legal response in any
round then \pa\ wins.
\end{definition}
In what follows by $S_c\sf K$, where $\sf K$ is a class having a Boolean reduct, 
we understand the class of complete subalgebras of $\sf K$, that is $\A\in S_c\sf K$ if there exists
$\B\in \sf K$ such that $\A\subseteq \B$ and for all $X\subseteq \A$ whenever $\sum^{\A}X=1,$ 
then $\sum^{\B}X=1$.
\begin{theorem}\label{thm:n}
Let $\sf K$ be any class between $\sf Sc$ and $\sf CA$. Let $n<m$, and let $\A$ be an atomic $\sf K_n.$
If $\A\in S_c\Nr_{n}\sf K_m, $
then \pe\ has a \ws\ in $F^m(\At\A).$
\end{theorem}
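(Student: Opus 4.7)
The plan is to use the neat embedding hypothesis to import \pe's strategy from the structure of the dilation. Fix $\B\in \sf K_m$ witnessing $\A\in S_c\Nr_n\sf K_m$, so $\A\subseteq \Nr_n\B$ and, crucially, every suprema existing in $\A$ is preserved in $\B$. For each network $N$ played in the game with $\nodes(N)\subseteq m$, I associate a ``characteristic element'' of the dilation
\[
\hat{N} \;=\; \prod_{\bar{x}\in {}^{n}\nodes(N)} \s_{\bar{x}}^{\B}\bigl(N(\bar{x})\bigr)\in \B,
\]
where $\s_{\bar{x}}^{\B}$ is the composition of replacements (and diagonals, to handle repetitions in $\bar{x}$) that moves the $n$-tuple $(0,1,\dots,n-1)$ onto $\bar{x}\in {}^n m$. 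These operators are term-definable in the $\sf Sc_m$ signature, hence available in $\B$, and this is enough because $F^m$ only involves cylindrifiers and diagonals. The invariant \pe\ maintains round by round is that $\hat{N}\neq 0$ in $\B$.

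For the initial round, \pa\ picks an atom $a\in \At\A$; \pe\ responds with the unique minimal network $N_0$ on the first $n$ nodes of $m$ labelling the identity tuple by $a$, together with the labels on degenerate tuples forced by the diagonals; then $\hat{N_0}=a\neq 0$. For a cylindrifier round $(N,F,k,b,l)$ with $F=\langle f_0,\dots,f_{n-2}\rangle$ and $b\leq \cyl{l}N(f_0,\dots,x,\dots,f_{n-2})$, let $\bar{y}$ be the tuple obtained by inserting $k$ into position $i$ of $F$. Using the compatibility of $\s_{\bar{y}}^{\B}$ with $\cyl{l}$ and the usual identities in $\B$, together with the inductive hypothesis $\hat{N}\neq 0$, one verifies that
\[
p \;:=\; \hat{N}\cdot \s_{\bar{y}}^{\B}(b)\;\neq\;0 \quad\text{in } \B.
\]
\pe\ then has to assign atoms in $\At\A$ to every remaining new tuple $\bar{x}\in {}^n(\nodes(N)\cup\{k\})$ that involves $k$. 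She enumerates these tuples $\bar{x}_1,\bar{x}_2,\dots$ and proceeds inductively: at each step with running element $p_j\neq 0$, since $\sum^{\A}\At\A=1$ and $\A$ is a \emph{complete} subalgebra of $\Nr_n\B$, we have $\sum^{\B}\At\A=1$, and so by complete additivity of each $\s_{\bar{x}_{j+1}}^{\B}$ on Booleans,
\[
p_j \;=\; p_j\cdot \sum_{a\in \At\A}\s_{\bar{x}_{j+1}}^{\B}(a) \;=\; \sum_{a\in\At\A} p_j\cdot \s_{\bar{x}_{j+1}}^{\B}(a),
\]
so some $a\in \At\A$ witnesses $p_{j+1}:=p_j\cdot \s_{\bar{x}_{j+1}}^{\B}(a)\neq 0$; \pe\ labels $M(\bar{x}_{j+1})=a$. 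After finitely many steps the resulting network $M\supseteq N$ satisfies $\hat{M}\neq 0$.

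The remaining verifications are that $M$ is indeed an atomic network (i.e., $M(\delta^i_j)\leq \diag{i}{j}$ and $M(\delta[i\to d])\leq \cyl{i}M(\delta)$): the first follows because $\s_{\bar{x}}^{\B}(a)\leq \diag{i}{j}$ forces compatibility with the diagonal labels on degenerate faces, and the second follows from the computation that $\hat{M}\leq \s_{\bar{x}}^{\B}(\cyl{i}M(\delta))$ by construction. The main obstacle is the bookkeeping in the cylindrifier step -- ensuring the $\s_{\bar{x}}^{\B}$ operators interact correctly with $\cyl{l}$ and with diagonals when tuples repeat nodes, and verifying that $p\neq 0$ after the cylindrifier move; this is where the complete subalgebra hypothesis, together with the fact that $\B$ has $m$ dimensions so every $k<m$ can legitimately serve as a node, is essential. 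Because $F^m$ allows \pa\ to reuse nodes, a bounded amount of ``relabelling'' of $\hat{N}$ may also be needed when \pa\ reinserts an old node, but this is handled by the same substitution machinery, keeping $\hat{N}\neq 0$ throughout the $\omega$ rounds and thereby giving \pe\ a winning strategy.
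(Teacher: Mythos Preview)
Your proposal is correct and follows essentially the same approach as the cited reference \cite{r}: the paper itself gives no proof beyond citing Hirsch's relation-algebra argument and noting it lifts to the $\sf CA$ case, and the method you outline (associate to each network the element $\hat N=\prod_{\bar x}\s_{\bar x}^{\B}N(\bar x)$ in the dilation $\B$, maintain $\hat N\neq 0$, and use atomicity of $\A$ together with the complete-subalgebra hypothesis to refine at each round) is precisely that argument. One small remark: your parenthetical ``and diagonals, to handle repetitions in $\bar x$'' is not quite right when $\sf K=\sf Sc$, since $\sf Sc$ has no diagonal elements; but the replacement operators $\s_i^j$ alone suffice to define $\s_{\bar x}$ for arbitrary (possibly non-injective) $\bar x\in{}^n m$, so this does not affect the argument.
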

\begin{proof}\cite[Theorem 33]{r}. Strictly speaking this theorem is proved for relation algebras, but the proof easily lifts to
the $\sf CA$ case.
\end{proof}

\begin{theorem}\label{can} For any finite $n>2$, any class
$\K$ between $S\Nr_n\TCA_{n+3}$ and $\sf TRCA_n$ is not atom-canonical.
\end{theorem}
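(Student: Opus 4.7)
The plan is a rainbow construction adapting Hodkinson's atom-canonicity argument for $\sf RCA_n$ \cite{Hodkinson} to the topological setting. Fix $n>2$ and take $\A = \sf TCA_{A,B}$ with finite linearly ordered $A$ and $B$ of cardinalities chosen so that \pa\ wins the game $F^{n+3}$ on the rainbow atom structure; concretely, the sharp Hirsch--Hodkinson parameters $|A|=n+2$, $|B|=n+1$ suffice. A simplifying feature of this $\A$ is that its interior operators are literally the identity map: the defining relation $[a]\, I_i\, [b]$ unpacks to $a\sim b$, i.e.\ $[a]=[b]$, so each $I_i$ collapses to the identity on $\A$ and hence on $\Cm\At\A$. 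The identity is completely additive, so $\Cm\At\A$ is well-defined as the \d\ completion, and membership of $\Cm\At\A$ in $S\Nr_n\TCA_{n+3}$ is equivalent to membership of $\Rd_{ca}\Cm\At\A$ in $S\Nr_n\CA_{n+3}$: any $\CA_{n+3}$-dilation can be augmented by identity interior operators to yield a $\TCA_{n+3}$-dilation, since all axioms of Definition \ref{topology} hold trivially for the identity map, and the reverse direction is forgetful.

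First I show $\A \in \sf TRCA_n$, which places $\A$ in every class $\K$ in the stated interval. By theorem \ref{neat}(1) it suffices to show $\Rd_{ca}\A \in \sf RCA_n$, since declaring the base of the resulting set representation discrete yields identity induced interior operators, matching the identity $I_i$'s of $\A$. Representability of $\Rd_{ca}\A$ is routine rainbow machinery: in the unrestricted (no pebble bound) $\omega$-length graph game on $\sf CRG$, \pe\ has fresh reds from $B$ and fresh shades of yellow available at every round, so she wins trivially, and the step-by-step limit of her responses yields a weak set representation in the usual way.

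Second, I show $\Cm\At\A \notin S\Nr_n\TCA_{n+3}$. Assume for contradiction otherwise; then $\Rd_{ca}\Cm\At\A \in S\Nr_n\CA_{n+3}$ by the reduction above. Since $\Cm\At\A$ is complete atomic and cylindrifiers are completely additive (so $\CA_{n+3}$ is closed under Dedekind--MacNeille completion), a standard argument passes to the completion of the ambient $\B \in \CA_{n+3}$ and places the embedding in $S_c\Nr_n\CA_{n+3}$: the atoms of $\Cm\At\A$ map to pairwise disjoint elements whose join, being the image of the top of the complete atomic $\Cm\At\A$, must equal the top on the target side, so the embedding preserves suprema over atoms and therefore over arbitrary subsets. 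Theorem \ref{thm:n} specialised to $\sf CA$s then forces \pe\ to have a winning strategy in $F^{n+3}(\At\A)$. So it suffices to exhibit a winning strategy for \pa\ in this game.

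\pa\ plays the standard rainbow pebble pattern. Using cylindrifier moves he repeatedly requests atoms encoding green cones with pairwise distinct tints from $A$ over a fixed face of $n-1$ nodes, reusing pebbles as $F^{n+3}$ permits. Each such request forces the tint of the new cone to lie in the shade of yellow $\y_S$ covering the face, so after $|A|=n+2$ requests $S$ is committed to contain at least $n+2$ tints; simultaneously the apex-apex edges must carry reds from $B$ (only $|B|=n+1$ indices available), constrained by the forbidden triangle condition on reds, and the combinatorics of $n+3$ available nodes together with $n+1$ red indices conspires to leave \pe\ without a legal cylindrifier response in finitely many rounds. Filling in this rainbow-strategy bookkeeping is the main technical obstacle; it is a minor adaptation of the Hirsch--Hodkinson rainbow pebble counting \cite{HHbook2}, and the topological component plays no role beyond the identity reduction above.
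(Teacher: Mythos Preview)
Your approach has a fundamental gap that makes it unworkable as stated. You take $\A = {\sf TCA}_{A,B}$ with $|A|=n+2$, $|B|=n+1$ both \emph{finite}. With finitely many colours the class $\sf CRG$ has only finitely many isomorphism types of coloured graphs on at most $n$ nodes, so the atom structure is finite and $\A$ is a finite algebra. In particular $\A$ is already complete, so $\Cm\At\A = \A$. Your two goals therefore collapse to asking that the same algebra $\A$ lie in $\sf TRCA_n$ and outside $S\Nr_n\TCA_{n+3}$ simultaneously; but $\sf TRCA_n = S\Nr_n\TCA_{n+\omega} \subseteq S\Nr_n\TCA_{n+3}$, so this is impossible. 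The representability claim is also false on its own terms: your justification that ``\pe\ has fresh reds from $B$ available at every round'' fails precisely because $B$ is finite---indeed with $|A|>|B|$ \pa\ wins the unbounded game by the same cone-bombardment you describe in your second half, so ${\sf CA}_{A,B}$ is not representable.

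What the paper does, and what you are missing, is the \emph{blow-up-and-blur} step. One starts from the finite rainbow ${\sf TCA}_{n+1,n}$ but then splits each red $\r_{kl}$ into infinitely many copies $\r_{kl}^t$ ($t\in\omega$), together with an extra shade of red $\rho$ outside the rainbow signature that \pe\ can use as a joker. This yields an \emph{infinite} atom structure $\At$ whose term algebra $\Tm\At$ is representable (\pe\ now genuinely has fresh reds, namely fresh $t$-indices, plus $\rho$ to fall back on), while $\Cm\At$ is not in $S\Nr_n\TCA_{n+3}$: the finite algebra ${\sf TCA}_{n+1,n}$ embeds into $\Cm\At$ by sending each red atom to the join of all its copies, and your $F^{n+3}$ argument applies to this finite subalgebra. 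The distinction between $\Tm\At$ and $\Cm\At$---which simply does not exist for your finite $\A$---is the entire content of the atom-canonicity failure. Your treatment of the interior operators via the discrete topology and your $F^{n+3}$ strategy for \pa\ are both fine; they just need to be attached to the correct (infinite, blown-up) atom structure.
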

\begin{proof}
We blow up and blur a finite rainbow cylindric algebra namely $R(\Gamma)$
where $\Gamma$ is the complete irreflexive graph $n+1$, and the greens
are  ${\sf G}=\{\g_i:1\leq i<n-1\}
\cup \{\g_0^{i}: 1\leq i\leq n+1\},$ we denote this finite algebra endowed by the $n$ identity 
interior operators by $\TCA_{n+1, n}.$

Let $\At$ be the rainbow atom structure similar to that in \cite{Hodkinson} except that we have $n+1$ greens and
only $n$ indices for reds, so that the rainbow signature now consists of $\g_i: 1\leq i<n-1$, $\g_0^i: 1\leq i\leq n+1$,
$\w_i: i<n-1$,  $\r_{kl}^t: k<l\in n$, $t\in \omega$,
binary relations and $\y_S$, $S\subseteq n+1$,
$n-1$ ary relations. 

We also have a shade of red $\rho$; the latter is a binary relation but is {\it outside the rainbow signature},
though  it is used to label coloured graphs during a certain game devised to prove representability 
of the term algebra \cite{Hodkinson}, and in fact \pe\ can win the $\omega$ rounded game
and build the $n$ homogeneous model $M$ by using $\rho$ whenever
she is forced a red, as will be shown in a while. 

So $\At$ is obtained from the rainbow atom structure of the algebra $\A$ defined in \cite[section 4.2 starting p. 25]{Hodkinson}  
truncating the greens to be finite (exactly $n+1$ greens). In \cite{Hodkinson} it shown that the complex algebra $\Cm\At\A$ is not representable; 
the result obtained now, because the greens are finite but still outfit the red, is sharper; 
it will imply that
$\Cm\At\notin S\Nr_n\TCA_{n+3}$.

The logics $L^n, L^n_{\infty \omega}$ are taken in the rainbow
signature (without $\rho$).

Now $\Tm\At\in \sf TRCA_n$; this can be proved like  in \cite{Hodkinson}.
Strictly speaking the cylindric reduct of $\Tm\At$ can be proved representable like in 
\cite{Hodkinson}; giving, as usual, the base of the representation the discrete topology we get representability of the 
interior operators as well.
The colours used for coloured graphs involved in building  the finite atom structure of the algebra $\sf TCA_{n+1, n}$ are:
\begin{itemize}

\item greens: $\g_i$ ($1\leq i\leq n-2)$, $\g_0^i$, $1\leq i\leq n+1$,

\item whites : $\w_i: i\leq n-2,$

\item reds:  $\r_{ij}$ $i<j\in n$,

\item shades of yellow : $\y_S: S\subseteq n+2$.

\end{itemize}
with {\it forbidden triples}
\vspace{-.2in}
\begin{eqnarray*}
&&\nonumber\\
(\g, \g^{'}, \g^{*}), (\g_i, \g_{i}, \w_i),
&&\mbox{any }1\leq i\leq  n-2  \\
(\g^j_0, \g^k_0, \w_0)&&\mbox{ any } 1\leq j, k\leq n+1\\
\label{forb:match}(\r_{ij}, \r_{j'k'}, \r_{i^*k^*}), &&i,j,i', k', i^*, j^*\in n,\\ \mbox{unless }i=i^*,\; j=j'\mbox{ and }k'=k^*.
\end{eqnarray*}
and no other triple is forbidden.

A coloured graph  is red
if at least one of its edges is labelled red.  
For brevity write $\r$ for $\r_{jk}$($j<k<n$).
If $\Gamma$ is a coloured graph using the colours in $\At \TCA_{n+1, n}$, and $a:n\to \Gamma$ is in $\At\TCA_{n+1,n}$,
then $a':n\to \Gamma'$ with $\Gamma'\in \sf CGR$
is a {\it copy} of $a:n\to \Gamma$ if $|\Gamma|=|\Gamma'|$,  all non red edges and $n-1$ tuples have the same colour (whenever defined) 
and  for all $i<j<n$, for every red $\r$, if  $(a(i), a(j))\in \r$,
then there exits $l\in \omega$ such that $(a'(i), a'(j))\in \r^l$. Here we implicitly require that for distinct $i,j,k<n$, if 
$(a(i),a(j))\in \r$, $(a(j), a(k))\in \r'$, $(a(i), a(k))\in \r''$, and 
$(a'(i), a'(j))\in \r^l_1$, $(a'(j), a'(k))\in [\r']^{l_2}$ and $(a'(i), a'(k))\in [\r'']^{l_3}$, then $l_1=l_2=l_3=l$, say,
so that $(\r^l, [\r']^l, [\r'']^l)$ 
is a consistent triangle in $\Gamma'$.
If $a':n\to \Gamma'$ and $\Gamma'$ is a red graph 
using the colours of the rainbow signature of $\At$, whose reds are $\{\r_{kj}^l: k<j<n, l\in \omega\},$
then there is a unique $a: n\to \Gamma$, $\Gamma$ 
a red graph using the red colours in the rainbow signature of $\sf TCA_{n+1,n}$, namely, $\{\r_{kj}: k<j< n\}$
such that $a'$ is a copy of $a$.
We denote $a$ by $o(a')$, $o$ short for {\it original}; $a$ is the original of its copy $a'$.

For $i<n$, let $T_i$ be the accessibility relation corresponding to the $i$th cylindrifier in $\At$. 
Let  
$T_i^{s}$, be that corresponding to the $i$th cylindrifier in ${\sf TCA}_{n+1, n}$. 
Then if $c:n\to \Gamma$ and $d: n\to \Gamma'$ are surjective maps 
$\Gamma, \Gamma'$ are coloured graphs for ${\sf TCA}_{n+1, n}$, that are not red, then for any $i<n$, we have 
$$([c],[d])\in T_i\Longleftrightarrow ([c],[d])\in T_i^s.$$

If $\Gamma$ is red using the colours for the rainbow signature of $\At$ (without $\rho$) 
and $a':n\to \Gamma$,then for any $b:n\to \Gamma'$ where $\Gamma'$ is not red and any $i<n$, we have   
$$([a'], [b])\in T_i\Longleftrightarrow  ([o(a')], [b])\in T_i^{s}.$$
Extending the notation, for $a:n\to \Gamma$ a graph that is not red in $\At$, set $o(a)=a$.
Then for any $a:n\to \Gamma$, $b:n\to \Gamma'$, where $\Gamma, \Gamma'$ are 
coloured graphs at least  one of which is not red in $\At$ and any $i<n$, we have
$$[a]T_i[b]\Longleftrightarrow [o(a)]T_i^s[o(b)].$$

Now we deal with the last case, when the two graphs involved are red. 
Now assume that $a':n\to \Gamma$ is as above, that is $\Gamma\in {\sf CGR}$ is red, 
$b:n\to \Gamma'$ and $\Gamma'$ is red too, using the colours in the rainbow signature $\At$.

Say that two maps $a:n\to \Gamma$, $b:n\to \Gamma'$, with $\Gamma$ and $\Gamma'\in \sf CGR$ having the same size 
are  $\r$ related if all non red edges and $n-1$ tuples have the same colours (whenever defined), and 
for all every red $\r$, whenever  $i<j<n$, $l\in \omega$,  and $(a(i), a(j))\in \r^l$, then there exists
$k\in \omega$ such that $(b(i), b(j))\in \r^k$.
Let $i<n$. Assume that $([o(a')], [o(b)])\in T_i^s$. Then there exists $c:n\to \Gamma$ that is $\r$ related to $a'$
such that $[c]T_i[b]$.  Conversely, if $[c]T_i[b]$, then $[o(c)]T_i[o(b)].$

Hence, by complete additivity of cylindrifiers,  the map $\Theta: \At({\sf TCA}_{n+1, n})\to \Cm\At$ defined via
\[
 \Theta(\{[a]\})=
  \begin{cases}
    \{ [a']: \text { $a'$   copy of $a$}\}  \text { if $a$ is red, } \\
        \{[a]\} \text { otherwise. } \\
   
  \end{cases}
\]
\\
induces an embedding from ${\sf TCA}_{n+1, n}$ to $\Cm\At$, which we denote also by 
$\Theta$.

We first check preservation of diagonal elements. 
If $a'$ is a copy of $a$, $i, j<n$, and  $a(i)=a(j)$, then $a'(i)=a'(j)$.

We next  check cylindrifiers. We show that for all $i<n$ and $[a]\in \At(\TCA_{n+1, n})$ we have: 
$$\Theta({\sf c}_i[a])=\bigcup \{\Theta([b]):[b]\in \At\TCA_{n+1,n}, [b]\leq {\sf c}_i[a]\}= {\sf c}_i\Theta ([a]).$$
Let $i<n$. If $[b]\in \At\TCA_{n+1,n}$,  $[b]\leq {\sf c}_i[a]$, and $b':n\to \Gamma$, $\Gamma\in \sf CGR$, is a copy of $b$, 
then there exists  $a':n\to \Gamma'$, $\Gamma'\in \sf CGR$, a copy of  $a$ such that
$b'\upharpoonright  n\setminus \{i\}=a'\upharpoonright n\setminus \{i\}$. Thus $\Theta([b])\leq {\sf c}_i\Theta([a])$.

Conversely, if $d:n\to \Gamma$, $\Gamma\in \sf CGR$ and $[d]\in {\sf c}_i\Theta([a])$, then there exist $a'$ a copy of $a$ such that 
$d\upharpoonright n\setminus \{i\}=a'\upharpoonright n\setminus \{i\}$. 
Hence $o(d)\upharpoonright n\setminus  \{i\}=a\upharpoonright n\setminus \{i\}$, and so $[d]\in \Theta({\sf c}_i[a]),$
and we are done.

But now we can show that \pa\ can win the game $F^{n+3}$ on $\At({\sf TCA}_{n+1,n})$ in only $n+2$ rounds 
as follows. 
Viewed as an \ef\ forth game  pebble game, with finitely many rounds and pairs of pebbles, 
played on the two complete irreflexive graphs $n+1$ and $n$, in each round $0,1\ldots n$, \pa\ places a  new pebble  on  an element of $n+1$.
The edge relation in $n$ is irreflexive so to avoid losing
\pe\ must respond by placing the other  pebble of the pair on an unused element of $n$.
After $n$ rounds there will be no such element,
and she loses in the next round.
Hence \pa\ can win the graph game on $\At({\sf TCA}_{n+1,n})$ in $n+2$ rounds using  $n+3$ nodes.

In the game $F^{n+3}$ \pa\ forces a win on a red clique using his excess of greens by bombarding \pe\
with $\alpha$ cones having the same base ($1\leq \alpha\leq n+2)$.

In his zeroth move, \pa\ plays a graph $\Gamma$ with
nodes $0, 1,\ldots, n-1$ and such that $\Gamma(i, j) = \w_0 (i < j <
n-1), \Gamma(i, n-1) = \g_i ( i = 1,\ldots, n-2), \Gamma(0, n-1) =
\g^0_0$, and $ \Gamma(0, 1,\ldots, n-2) = \y_{n+2}$. This is a $0$-cone
with base $\{0,\ldots , n-2\}$. In the following moves, \pa\
repeatedly chooses the face $(0, 1,\ldots, n-2)$ and demands a node
$\alpha$ with $\Phi(i,\alpha) = \g_i$, $(i=1,\ldots n-2)$ and $\Phi(0, \alpha) = \g^\alpha_0$,
in the graph notation -- i.e., an $\alpha$-cone, without loss $n-1<\alpha\leq  n+1$,  on the same base.
\pe\ among other things, has to colour all the edges
connecting new nodes $\alpha, \beta$ created by \pa\ as apexes of cones based on the face $(0,1,\ldots, n-2)$, that is $\alpha,
\beta\geq n-2$. 
By the rules of the game
the only permissible colours would be red. Using this, \pa\ can force a
win in $n+2$ rounds, using $n+3$ nodes  without needing to re-use them, 
thus forcing \pe\ to deliver an inconsistent triple 
of reds.

Let $\B={\sf TCA}_{n+1, n}$. 
Then $\Rd_{sc}\B$   is
outside $S\Nr_n\Sc_{n+3}$ for if it was in $S\Nr_n\Sc_{n+3}$, then being finite it would be in $S_c\Nr_n\Sc_{n+3}$
because $\Rd_{sc}\B$ is the same as its canonical extension $\D$, say, and $\D\in S_c\Nr_n\Sc_{n+3}$. 
But then by theorem \ref{thm:n}, \pe\ would have won. 

Hence $\Rd_{sc}\mathfrak{Cm}\At\notin S\Nr_n\Sc_{n+3}$,  
because $\Rd_{sc}\B$ is embeddable in it
and $S\Nr_n\Sc_{n+3}$ is a variety; in particular, it is closed
under forming subalgebras. 
It now readily follows that $\Rd_{sc}\Cm\At\notin S\Nr_n{\sf Sc}_{n+3}$.

Finally $\Rd_{df}\A$ is not completely representable, because if it were then $\A$,
generated by elements whose dimension sets $<n$,  as a $\TCA_n$ would be completely representable 
and this induces a representation of its \d\ completion $\Cm\At\A$.

\end{proof}

\subsection{Non-finite axiomatizability}

Now we deal with fine non-finite axiomatizability results. We use the construction
in \cite{HHbook} together with the lifting argument used in \cite{t}
to prove a very strong non-finite axiomatizability results expressed by excluding {\it finite schema}. 
Of course $\sf RTCA_{\alpha}$ {\it cannot} be finitely axiomatizable
for the simple reason that its signature has infinitely many operations. This is the case with ${\sf TCA}_{\alpha}$, too, 
but one cannot  help but `sense'  that ${\sf TCA}_{\alpha}$ is axiomatizated by some {\it finite schema}; and indeed it is. 

The axiomatization is {\it finitary} in  a {\it two sorted} sense, one for
ordinals $<\alpha$ and the other for the first order situation. This can be formulated in such a way that there is a {\it strict finite set of equations}
in the signature of $\sf TCA_{\omega}$ such that the axiomatization of ${\sf TCA}_{\alpha}$, for any ordinal $\alpha\geq \omega$,
consists of {\it all $\alpha$ instances}
of such equations. Such a situation is best formulated in the context of {\it systems of varieties definable by a Monk's schema} 
\cite[Definitions, 5.6.11-5.6.12]{HMT2}. 
This is not the case for $\sf RTCA_{\alpha}$, and each of its approximations $S\Nr_{\alpha}{\sf TCA}_{\alpha+k}$, $k\geq 2$,
as we proceed to show.

For this purpose we show that for any ordinal $\alpha>2$, for any  $r\in \omega$,
and for any $k\geq 1$, there exists
$\B^r\in \SNr_{\alpha}\TCA_{\alpha+k}\sim S\Nr_{\alpha}\TCA_{\alpha+k+1}$ such
that $\Pi_{r/U}\B^r\in \sf TRCA_{\alpha}$, for any non-principal ultrafilter on $\omega$.
We will use quite sophisticated constructions of Hirsch and Hodkinson for relation and cylindric algebras reported 
in \cite{HHbook}.

Assume that $3\leq m\leq n<\omega$. For $r\in \omega$,  let $\C_r=\Ca(H_m^{n+1}(\A(n,r),  \omega))$
as defined in \cite[definition 15.3]{HHbook}.
We denote $\C_r$ by $\C(m,n,r)$.
Then the following hold:
\begin{lemma}\label{2.12}
\begin{enumarab}
\item For any $r\in \omega$ and $3\leq m\leq n<\omega$, we
have $\C(m,n,r)\in \Nr_m{\sf CA}_n,$ $\C(m,n,r)\notin S\Nr_m{\sf CA_{n+1}}$
and $\Pi_{r/U}\C(m,n,r)\in {\sf RCA}_m.$ Furthermore, for any $k\in \omega$, 
$\C(m, m+k, r)\cong \Nr_m\C(m+k, m+k, r).$

\item  If $3\leq m<n$, $k\geq 1$ is finite,  and $r\in \omega$, there exists $x_n\in \C(n,n+k,r)$
such that $\C(m,m+k,r)\cong \Rl_{x_n}\C(n, n+k, r)$ and ${\sf c}_ix_n\cdot {\sf c}_jx_n=x_n$
for all $i,j<m$.

\end{enumarab}
\end{lemma}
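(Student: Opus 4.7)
The proof rests on the machinery of $m$-dimensional $(n+1)$-wide $\omega$-hyperbases of the relation algebras $\A(n,r)$ developed in \cite[Chapters~13--15]{HHbook}. The algebras $\A(n,r)$ are finite, symmetric, integral rainbow-like relation algebras whose palette consists of $n$ greens and $r$ reds; the essential combinatorial input is that \pa\ wins a bounded atomic game on $\A(n,r)$ by bombarding \pe\ with green cones (exploiting his excess of greens over reds once $r$ is large enough), yet \pe\ wins the $\omega$-rounded atomic game on the ultraproduct $\prod_{r/U}\A(n,r)$ since each finite challenge is eventually absorbed.

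For part (1), the assertion $\C(m,n,r)\in\Nr_m\CA_n$ together with the ``furthermore'' clause $\C(m,m+k,r)\cong\Nr_m\C(m+k,m+k,r)$ falls out of the general neat-reduct lemma for hyperbasis complex algebras: restricting hypernetworks to a sub-family of nodes yields exactly the neat reduct. For $\C(m,n,r)\notin S\Nr_m\CA_{n+1}$, my plan is to argue contrapositively: if $\C(m,n,r)$ neatly embedded into some $\D\in\CA_{n+1}$, then the analogue of Theorem~\ref{thm:n} would give \pe\ a winning strategy in the atomic game $F^{n+2}$ on $\At\C(m,n,r)$; via the standard translation between cylindric networks and hypernetworks, this would synthesise a winning strategy for \pe\ in an $(n+2)$-rounded hyperbasis game on $\A(n,r)$, contradicting \pa's victory for $r$ sufficiently large. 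For the ultraproduct, \Los's theorem gives $\prod_{r/U}\C(m,n,r)\cong \Ca H_m^{n+1}(\prod_{r/U}\A(n,r),\omega)$, and since \pe\ wins the $\omega$-rounded atomic game on $\prod_{r/U}\A(n,r)$, the Hirsch-Hodkinson representability criterion delivers representability of this cylindric algebra.

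For part (2), I would take $x_n\in \C(n,n+k,r)$ to be the set of all hypernetwork atoms whose ``higher'' coordinates $m,\ldots,n-1$ are frozen into a fixed trivial pattern depending on no coordinate $i<m$. This choice makes $x_n$ invariant under ${\sf c}_i$ for each $i<m$, whence ${\sf c}_ix_n\cdot{\sf c}_jx_n=x_n$ for all $i,j<m$. The restriction map $N\mapsto N\restr{m}$ sends atoms of $\Rl_{x_n}\C(n,n+k,r)$ bijectively to atoms of $\C(m,m+k,r)$, and a routine check shows this bijection commutes with the Boolean operations and with the cylindrifiers and diagonals in the indices $i,j<m$ (the only ones surviving the relativization), yielding the desired isomorphism at the level of complex algebras.

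The principal difficulty lies in the non-embeddability portion of part (1): one has to convert \pa's purely relation-algebra winning strategy, based on cone bombardment and red-clique forcing, into a winning strategy at the level of cylindric atomic games on $\At\C(m,n,r)$, verifying along the way that any putative $(n+1)$-dilation of $\C(m,n,r)$ would supply \pe\ with precisely the amalgamation witnesses needed to defeat \pa\ across $n+2$ rounds. This transfer step is the heart of the argument, and the bulk of the work goes into tracking how the hypernetwork saturation parameters $m$ and $n+1$ interact with the number of available nodes in the cylindric game; everything else is bookkeeping once this correspondence is in place.
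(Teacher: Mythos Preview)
Your overall structure follows the paper's: part~(1) is handled by citing the hyperbasis machinery of \cite{HHbook} (Definitions 12.21, 15.2--15.4, Corollary 15.10, and exercise 2 on p.~484), and part~(2) by exhibiting an explicit element $x_n$ and an explicit isomorphism. However, there are two substantive inaccuracies.

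First, you describe $\A(n,r)$ as a ``rainbow-like'' relation algebra with ``$n$ greens and $r$ reds'' and \pa's strategy as ``bombarding \pe\ with green cones.'' This is incorrect: the paper (and \cite[Definition 15.2]{HHbook}) explicitly calls $\A(n,r)$ a \emph{Monk-like} relation algebra. The atoms and forbidden triples of Monk algebras are set up so that non-embeddability into $S\Ra\CA_{n+1}$ comes from a Ramsey/pigeonhole counting argument on colourings of $(n+1)$-cliques, not from the cone-bombardment mechanism peculiar to rainbow algebras. Your proposed translation into the cylindric game $F^{n+2}$ via Theorem~\ref{thm:n} also runs into the difficulty that Theorem~\ref{thm:n} concerns $S_c$, not $S$, and $\C(m,n,r)$ is infinite (because of the $\omega$ many hyperlabels), so the passage from $S$ to $S_c$ is not automatic. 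The paper sidesteps this entirely by citing the direct hyperbasis characterisation in \cite[Corollary 15.10]{HHbook}.

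Second, in part~(2) you treat the isomorphism as a simple restriction map $N\mapsto N\restr m$, but you overlook that $\C(m,m+k,r)$ is built from $\A(m+k,r)$ while $\C(n,n+k,r)$ is built from $\A(n+k,r)$: these are \emph{different} relation algebras. The paper's explicit choice
\[
x_n=\{f:{}^{\leq n+k+1}n\to \At\A(n+k, r)\cup \omega:\ m\leq j<n\to \exists i<m,\ f(i,j)=\Id\}
\]
and the accompanying map $I_n$ show exactly how the identity edges tie the extra nodes $m,\ldots,n-1$ to the first $m$ nodes so that the two hyperbasis structures match up; your description of ``freezing higher coordinates into a fixed trivial pattern depending on no coordinate $i<m$'' is too vague to capture this, and in fact the paper's $x_n$ \emph{does} depend on the first $m$ coordinates through the identity-edge condition. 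Also note the paper only claims ${\sf c}_ix_n\cdot{\sf c}_jx_n=x_n$ for distinct $i,j<m$, not ${\sf c}_ix_n=x_n$.
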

\begin{proof}
\begin{enumarab}

\item  Assume that $3\leq m\leq n<\omega$, and let
$$\mathfrak{C}(m,n,r)=\Ca(H_m^{n+1}(\A(n,r),  \omega)),$$
be as defined in \cite[Definition 15.4]{HHbook}.
Here $\A(n,r)$ is a finite Monk-like relation algebra \cite[Definition 15.2]{HHbook}
which has an $n+1$-wide $m$-dimensional hyperbasis $H_m^{n+1}(\A(n,r), \omega)$
consisting of all $n+1$-wide $m$-dimensional  wide $\omega$ hypernetworks \cite[Definition 12.21]{HHbook}.
For any $r$ and $3\leq m\leq n<\omega$, we have $\mathfrak{C}(m,n,r)\in \Nr_m{\sf CA}_n$.
Indeed, let $H=H_n^{n+1}(\A(n,r), \omega)$. Then $H$  is an $n+1$-wide $n$ dimensional $\omega$ hyperbasis,
so $\Ca H\in {\sf CA}_n.$ But, using the notation in \cite[Definition 12.21 (5)]{HHbook},
we have  $H_m^{n+1}(\A(n,r),\omega)=H|_m^{n+1}$.
Thus
$$\mathfrak{C}(m,n,r)=\Ca(H_m^{n+1}(\A(n,r), \omega))=\Ca(H|_m^{n+1})\cong \Nr_m\Ca H.$$
The second part is proved in \cite[Corollary 15.10]{HHbook},
and the third in \cite[exercise 2, p. 484]{HHbook}.
We consider  $\mathfrak{C}(m,n,r)$ expanded by $m$ unary operations, namely, each equal to the identity.
\item  Let $3\leq m<n$. Take $$x_n=\{f:{}^{\leq n+k+1}n\to \At\A(n+k, r)\cup \omega:  m\leq j<n\to \exists i<m, f(i,j)=\Id\}.$$
Then $x_n\in C(n,n+k,r)$ and ${\sf c}_ix_n\cdot {\sf c}_jx_n=x_n$ for distinct $i, j<m$.
Furthermore
\[{I_n:\C}(m,m+k,r)\cong \Rl_{x_n}\Rd_m {\C}(n,n+k, r),\]
via the map, defined for $S\subseteq H_m^{m+k+1}(\A(m+k,r), \omega)),$ by
$$I_n(S)=\{f:{}^{\leq n+k+1}n\to \At\A(n+k, r)\cup \omega: f\upharpoonright {}^{\leq m+k+1}m\in S,$$
$$\forall j(m\leq j<n\to  \exists i<m,  f(i,j)=\Id)\}.$$
\end{enumarab}
\end{proof}
An analogous result to the coming theorem is proved 
for cylindric algebras in \cite{logica}, with a precursor in \cite{t}. The argument used in all three proofs is basically a lifting argument 
initiated by Monk \cite[Theorem 3.2.67]{HMT2}.

\begin{theorem}\label{new} Let $\alpha>2$ be an  ordinal. Then for any $r\in \omega$, for any
finite $k\geq 1$, for any $l\geq k+1$ (possibly infinite),
there exist $\B^{r}\in S\Nr_{\alpha}\TCA_{\alpha+k}\sim S\Nr_{\alpha}\TCA_{\alpha+k+1}$ such
$\Pi_{r\in \omega}\B^r\in S\Nr_{\alpha}\TCA_{\alpha+l}$.
In particular, for any such $k$ and $l$, and for $\alpha$ finite, $S\Nr_{\alpha}\TCA_{\alpha+l}$ is not finitely axiomatizable over
$S\Nr_{\alpha}\TCA_{\alpha+k}$, and for infinite $\alpha$,  $S\Nr_{\alpha}\TCA_{\alpha+l}$ is not axiomatizable
by a finite schema over $S\Nr_{\alpha}\TCA_{\alpha+k}$.
\end{theorem}
\begin{proof} 
By lemma \ref{2.12} we can assume that $\alpha$ is infinite. We first show that
$S\Nr_{\alpha}\TCA_{\alpha+k+1}\neq S\Nr_{\alpha}\TCA_{\alpha+k};$ furthermore we construct infinitely many algebras
that witness the strictness
of the inclusion, one for each $r\in \omega$. Their ultraproduct relative to any non-principal ultrafilter on $\omega$
will be representable. We use the algebras $\C(m,n,r)$ in theorem \ref{2.12} in the signature of $\sf TCA_m$, by interpreting 
the interior operator $I_i$ for each $i<m$
as the identity function, so we still have $\C(m, m+k, r)\cong \Nr_m\C(m+k, m+k, r)$ 
for any $k\in \omega$.

Fix $r\in \omega$.
Let $I=\{\Gamma: \Gamma\subseteq \alpha,  |\Gamma|<\omega\}$.
For each $\Gamma\in I$, let $M_{\Gamma}=\{\Delta\in I: \Gamma\subseteq \Delta\}$,
and let $F$ be an ultrafilter on $I$ such that $\forall\Gamma\in I,\; M_{\Gamma}\in F$.
For each $\Gamma\in I$, let $\rho_{\Gamma}$
be a one to one function from $|\Gamma|$ onto $\Gamma.$

Let ${\C}_{\Gamma}^r$ be an algebra similar to $\TCA_{\alpha}$ such that
\[\Rd^{\rho_\Gamma}{\C}_{\Gamma}^r={\C}(|\Gamma|, |\Gamma|+k,r).\]
Let
\[\B^r=\Pi_{\Gamma/F\in I}\C_{\Gamma}^r.\]
Then it can be proved like in \cite{t} that 
\begin{enumerate}
\item\label{en:1} $\B^r\in S\Nr_\alpha\TCA_{\alpha+k},$ 
\item\label{en:2} $\Rd_{ca}\B^r\not\in S\Nr_\alpha\CA_{\alpha+k+1},$
\item\label{en:3} $\Pi_{r/U}\B^r\in \sf RTCA_{\alpha}.$
\end{enumerate}

For the first part, for each $\Gamma\in I$ we know that $\C(|\Gamma|+k, |\Gamma|+k, r) \in\TCA_{|\Gamma|+k}$ and
$\Nr_{|\Gamma|}\C(|\Gamma|+k, |\Gamma|+k, r)\cong\C(|\Gamma|, |\Gamma|+k, r)$.
Let $\sigma_{\Gamma}$ be a one to one function
 $(|\Gamma|+k)\rightarrow(\alpha+k)$ such that $\rho_{\Gamma}\subseteq \sigma_{\Gamma}$
and $\sigma_{\Gamma}(|\Gamma|+i)=\alpha+i$ for every $i<k$. Let $\A_{\Gamma}$ be an algebra similar to a
$\CA_{\alpha+k}$ such that
$\Rd^{\sigma_\Gamma}\A_{\Gamma}=\C(|\Gamma|+k, |\Gamma|+k, r)$.
Then, clearly
 $\Pi_{\Gamma/F}\A_{\Gamma}\in \TCA_{\alpha+k}$.

We prove that $\B^r\subseteq \Nr_\alpha\Pi_{\Gamma/F}\A_\Gamma$.  Recall that $\B^r=\Pi_{\Gamma/F}\C^r_\Gamma$ and note
that $\C^r_{\Gamma}\subseteq A_{\Gamma}$
(the universe of $\C^r_\Gamma$ is $C(|\Gamma|, |\Gamma|+k, r)$, the universe of $\A_\Gamma$ is $C(|\Gamma|+k, |\Gamma|+k, r)$).
So, for each $\Gamma\in I$,
\begin{align*}
\Rd^{\rho_{\Gamma}}\C_{\Gamma}^r&=\C((|\Gamma|, |\Gamma|+k, r)\\
&\cong\Nr_{|\Gamma|}\C(|\Gamma|+k, |\Gamma|+k, r)\\
&=\Nr_{|\Gamma|}\Rd^{\sigma_{\Gamma}}\A_{\Gamma}\\
&=\Rd^{\sigma_\Gamma}\Nr_\Gamma\A_\Gamma\\
&=\Rd^{\rho_\Gamma}\Nr_\Gamma\A_\Gamma
\end{align*}
Thus (using a standard Los argument) we have:
$\Pi_{\Gamma/F}\C^r_\Gamma\cong\Pi_{\Gamma/F}\Nr_\Gamma\A_\Gamma=\Nr_\alpha\Pi_{\Gamma/F}\A_\Gamma$,
proving \eqref{en:1}.

Now we prove \eqref{en:2}.
For this assume, seeking a contradiction, that $\B^r\in S\Nr_{\alpha}\TCA_{\alpha+k+1}$,
then $\Rd_{ca}\B^r\subseteq \Nr_{\alpha}\C$, where  $\C\in \CA_{\alpha+k+1}$.
Let $3\leq m<\omega$ and  $\lambda:m+k+1\rightarrow \alpha +k+1$ be the function defined by $\lambda(i)=i$ for $i<m$
and $\lambda(m+i)=\alpha+i$ for $i<k+1$.
Then $\Rd^\lambda\C\in \CA_{m+k+1}$ and $\Rd_m\Rd_{ca}\B^r\subseteq \Nr_m\Rd^\lambda\C$.

For each $\Gamma\in I$,\/  let $I_{|\Gamma|}$ be an isomorphism
\[{\C}(m,m+k,r)\cong \Rl_{x_{|\Gamma|}}\Rd_m {\C}(|\Gamma|, |\Gamma+k|,r).\]
Exists by item (2) of lemma \ref{2.12}.
Let $x=(x_{|\Gamma|}:\Gamma)/F$ and let $\iota( b)=(I_{|\Gamma|}b: \Gamma)/F$ for  $b\in \C(m,m+k,r)$.
Then $\iota$ is an isomorphism from $\C(m, m+k,r)$ into $\Rl_x\Rd_m\B^r$.
Then by \cite[theorem~2.6.38]{HMT1} we have $\Rl_x\Rd_{m}\Rd_{ca}\B^r\in S\Nr_m\CA_{m+k+1}$.
It follows that  $\C (m,m+k,r)\in S\Nr_{m}\CA_{m+k+1}$ which is a contradiction and we are done.

Now we prove \eqref{en:3} putting the superscript $r$ to use.
Recall that $\B^r=\Pi_{\Gamma/F}\C^r_\Gamma$, where $\C^r_\Gamma$ has the type of $\TCA_{\alpha}$
and $\Rd^{\rho_\Gamma}\C^r_\Gamma=\C(|\Gamma|, |\Gamma|+k, r)$.
We know 
from item (1) of lemma \ref{2.12} that $\Pi_{r/U}\Rd^{\rho_\Gamma}\C^r_\Gamma=\Pi_{r/U}\C(|\Gamma|, |\Gamma|+k, r) \subseteq \Nr_{|\Gamma|}\A_\Gamma$,
for some $\A_\Gamma\in\TCA_{|\Gamma|+\omega}$.

Let $\lambda_\Gamma:|\Gamma|+k+1\rightarrow\alpha+k+1$
extend $\rho_\Gamma:|\Gamma|\rightarrow \Gamma \; (\subseteq\alpha)$ and satisfy
\[\lambda_\Gamma(|\Gamma|+i)=\alpha+i\]
for $i<k+1$.  Let $k+1\leq l\leq \omega$.
Let $\F_\Gamma$ be a $\TCA_{\alpha+l}$ type algebra such that $\Rd^{\lambda_\Gamma}\F_\Gamma=\Rd_l\A_\Gamma$.
As before, $\Pi_{\Gamma/F}\F_\Gamma\in\TCA_{\alpha+l}$.  And
\begin{align*}
\Pi_{r/U}\B^r&=\Pi_{r/U}\Pi_{\Gamma/F}\C^r_\Gamma\\
&\cong \Pi_{\Gamma/F}\Pi_{r/U}\C^r_\Gamma\\
&\subseteq \Pi_{\Gamma/F}\Nr_{|\Gamma|}\A_\Gamma\\
&=\Pi_{\Gamma/F}\Nr_{|\Gamma|}\Rd^{\lambda_\Gamma}\F_\Gamma\\
&\subseteq\Nr_\alpha\Pi_{\Gamma/F}\F_\Gamma,
\end{align*}
Hence, we get that  $\Pi_{r/U}\B^r\in S\Nr_{\alpha}\TCA_{\alpha+l}$
and we are done.

Now we show that for $k\geq 1$ and $l\geq k+1$, there is no finite set of equations
in the language of $\TCA_{\omega}$ $E$, such that its $\alpha$ instances axiomatize $S\Nr_{\alpha}\TCA_{\alpha+l}$ over
$S\Nr_{\alpha}\TCA_{\alpha+k}$.

By an $\alpha$ instance of an equation in the signature of $\TCA_{\omega}$ is meant the following.
If $\rho:\omega\to \alpha$ is an injection, then $\rho$ extends recursively
to a function $\rho^+$ from $\TCA_{\alpha}$ terms to $\TCA_{\alpha}$ terms.
On variables $\rho^+(v_k)=v_k$, and for compound terms
like ${\sf c}_k\tau$, where $\tau$ is a $\TCA_{\omega}$ term, and $k<\omega$, $\rho^+({\sf c}_k\tau)={\sf c}_{\rho(k)}\rho^+(\tau)$.
For an equation $e$ of the form $\sigma=\tau$ in the language of $\TCA_{\omega}$, $\rho^+(e)$ is
the equation $\rho^+(\tau)=\rho^+(\sigma)$ in the language
of $\TCA_{\alpha}$. This last equation, namely, $\rho^+(e)$ is called an $\alpha$ instance of $e$
obtained by applying the injection $\rho$.

Let $k\geq 1$ and $l\geq k+1$. Assume for contradiction that $S\Nr_{\alpha}\TCA_{\alpha+l}$ is axiomatizable
by a finite schema over  $S\Nr_{\alpha}\TCA_{\alpha+k}$.
We can assume without loss  that there is only one equation in the signature of $\TCA_{\omega}$,
such that all its $\alpha$ instances,  axiomatize  $S\Nr_{\alpha}\TCA_{\alpha+l}$ over
$S\Nr_{\alpha}\TCA_{\alpha+k}$.
So let $\sigma$ be such an  equation and let $E$ be its $\alpha$ instances; so that
 for any $\A\in S\Nr_{\alpha}\TCA_{\alpha+k}$ we have $\A\in \bold S\Nr_{\alpha}\TCA_{\alpha+l}$ iff
$\A\models E$.  Then for all $r\in \omega$, there is an instance of $\sigma$,  $\sigma_r$ say,
such that $\B^r$ does not model $\sigma_r$.
$\sigma_r$ is obtained from $\sigma$ by some injective map $\mu_r:\omega\to \alpha$.

For $r\in \omega,$ let $v_r\in {}^{\alpha}\alpha$,
be an injection such that $\mu_r(i)=v_r(i)$ for each index $i$ appearing in $(\sigma_r)$, 
and let $\A_{r}= \Rd^{v_r}\B^r$.
Now $\Pi_{r/U} \A_{r}\models \sigma$. But then
$$\{r\in \omega: \A_{r}\models \sigma\}=\{r\in \omega: \B^r\models \sigma_r\}\in U,$$
contradicting that
$\B^r$ does not model $\sigma_r$ for all $r\in \omega.$
\end{proof}

\subsection{Decidability issues}

\begin{theorem} It is undecidable to tell whether a finite $\sf TCA_n$ $n>2$ is representable or not.
\end{theorem}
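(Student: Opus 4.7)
\begin{demo}{Proof plan}
The strategy is to obtain this as a corollary of the known fact, due to Hirsch and Hodkinson (see \cite{HHbook}), that it is undecidable whether a finite $\CA_n$ with $n>2$ is representable. Specifically, we exhibit a computable many-one reduction from the representation problem for finite $\CA_n$ to that for finite $\TCA_n$.

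First, given a finite $\A\in \CA_n$, define $\A^+\in\TCA_n$ by expanding $\A$ with $n$ unary operators $I_0,\ldots,I_{n-1}$, each interpreted as the identity function on~$A$. Axioms (1)--(7) of Definition \ref{topology} are trivially satisfied by the identity map (it is idempotent, lies below itself, preserves binary meets, sends $1$ to $1$, and commutes with the remaining cylindric operations in the required way), so $\A^+\in\TCA_n$. The assignment $\A\mapsto\A^+$ is clearly computable.

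The core claim is that $\A\in\sf RCA_n$ if and only if $\A^+\in\sf TRCA_n$. The backward direction is immediate: any subdirect embedding $\A^+\hookrightarrow\prod_i\B_i$ with $\B_i\in\sf TWs_n$ yields, upon forgetting interior operators on both sides, a subdirect embedding $\A\hookrightarrow \prod_i \Rd_{ca}\B_i$ whose factors are weak set algebras, so $\A\in\sf RCA_n$. For the forward direction, fix a subdirect representation $f:\A\hookrightarrow\prod_i\B_i$ with each $\B_i$ a weak set algebra over base $U_i$, and endow each $U_i$ with the \emph{discrete topology}. Since in a discrete space every subset is its own interior, the interior operators on $\B_i$ induced via Definition \ref{interior} all equal the identity. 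Consequently, when $\B_i$ (and thus $\prod_i\B_i$) is expanded to a $\TCA_n$ this way, $f$ automatically preserves the (identity) interior operators and becomes a $\TCA_n$-embedding, witnessing $\A^+\in\sf TRCA_n$. This ``discrete topology trick'' is exactly the observation flagged in the paper right after the definition of $\sf TCs_n$, used there to transfer negative results from $\CA_n$ to $\TCA_n$.

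Combining these, $\A\mapsto\A^+$ is a computable reduction from the undecidable problem ``is the finite algebra $\A$ in $\sf RCA_n$?'' to the problem ``is the finite algebra $\A^+$ in $\sf TRCA_n$?'', yielding the claimed undecidability for all $n>2$. The main obstacle is not in the reduction itself, which is transparent, but in the invocation of the deep Hirsch--Hodkinson undecidability theorem for $\CA_n$; an alternative, more self-contained route would be a direct tiling-style encoding of an undecidable $\mathbb{N}\times\mathbb{N}$-tiling problem into representability of a finite $\TCA_n$, analogous to the technique the paper already uses for $\L_2$ and to the rainbow/tiling arguments underpinning the $\CA_n$ result itself.
\end{demo}
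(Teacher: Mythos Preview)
Your proof is correct and follows essentially the same approach as the paper: expand a finite $\CA_n$ to a $\TCA_n$ by taking all interior operators to be the identity, observe (via the discrete-topology trick) that representability is preserved and reflected, and invoke the known undecidability for $\CA_n$. The only cosmetic differences are that the paper phrases it as a proof by contradiction rather than an explicit many-one reduction, restricts attention to simple $\A$ (harmlessly), and cites \cite{AU} rather than \cite{HHbook} for the $\CA_n$ undecidability.
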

\begin{proof} Let $n>2$. Assume that there is a desicion precedure to tell. Let $\A$ be a given simple finite $\CA_n$. 
Consider $\A$ as a $\TCA_n$
expanded with interior operators defined as the identity map, call it $\A^+$. Then we can decide whether $\A^+$ is representable or not as a 
$\sf TCA_n$. But $\A^+$ is representable iff $\A=\Rd_{ca}\A^+$ is representable, 
hence we get a decision procedure that tells whether $\A$
as a $\CA_n$ is representable or not. This contradicts \cite{AU}.
\end{proof}

The following corollary is a consequence of the above lemma and of the undecidability result that we have just proved, witness
\cite[corollary 18.16, theorem 18.27]{HHbook} for similar results for relation algebras.

For a class $\sf K$ of algebras, the class ${\sf \K}\cap \sf Fin$ denotes the class of finite members of $\K$.

\begin{corollary}\label{undecidability} Let $2<n<\omega$. Then the following hold 
\begin{enumarab}

\item The set of isomorphism types of finite algebras in $\TCA_n$ with
only infinite representations is not recursively enumerable.

\item The equational theory of $\sf RTCA_n$ is undecidable.

\item The equational theory of $\sf RTCA_n\cap \sf Fin$ is undecidable.

\item The variety $\sf RTCA_n$  is not finitely axiomatizable even
in $m$th order logic.

\end{enumarab}
\end{corollary}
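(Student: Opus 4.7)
The plan is to derive all four items from the preceding theorem, which asserts the undecidability of representability for finite $\TCA_n$ with $n>2$, in direct parallel to the relation algebra arguments of \cite[Corollary~18.16, Theorem~18.27]{HHbook}. The passage from the $\CA_n$-setting to $\TCA_n$ is uniformly handled by endowing bases of representations with the discrete topology (so the $I_i$'s become identity) as in the proof of the preceding theorem and item (7) of theorem \ref{neat}; this lets one invoke the available machinery for cylindric algebras whenever needed.

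For item (1), I would argue by contradiction using two auxiliary r.e.\ classes. Since $\sf RTCA_n$ is a variety (item (3) of theorem \ref{neat}) axiomatizable by a recursive schema, the predicate ``$\A$ is not representable'' is r.e.: enumerate the defining equations and halt on a finite input $\A$ as soon as one fails. Also, ``$\A$ has a finite representation'' is r.e.: enumerate finite members of ${\sf TGs}_n$ and search for a $\TCA_n$-embedding from $\A$ into each. If the class in item (1) were also r.e., then representability would be r.e.\ as the union of the two classes ``has a finite representation'' and ``has only infinite representations'', hence recursive when combined with its co-r.e.\ complement, contradicting the preceding theorem.

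For items (2) and (3), I would use the standard diagram-equation reduction. To each finite simple $\A\in\TCA_n$ one assigns an equation $\epsilon_\A$ in a single variable coding the full operation table of $\A$ (Boolean, cylindric, diagonal, and interior operations) so that $\sf RTCA_n \models \epsilon_\A$ iff $\A$ is not representable, exactly as in \cite[Proposition~17.5]{HHbook} adapted to the topological signature (interior operators on a finite algebra give only finitely many extra conjuncts). Decidability of the equational theory of $\sf RTCA_n$ would then decide representability of finite simple algebras, hence via subdirect decomposition all finite algebras, contradicting the preceding theorem; this gives (2). For (3) one refines this by constructing, for each representable finite $\A$, a \emph{finite} representable $\B$ refuting $\epsilon_\A$, obtained by splitting an atom in a finite representable set algebra equipped with the discrete topology (so the $I_i$'s are identities, reducing matters to the $\CA_n$-construction of \cite[Theorem~18.13]{HHbook}).

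For item (4), the key observation is that on a fixed finite structure $\A$, any $m$th order sentence can be evaluated recursively, as all its quantifiers range over finite sets (the powersets of a finite set are finite). Hence if $\sf RTCA_n$ were axiomatized by a single $m$th order sentence $\varphi$, then for every finite $\A$ the property ``$\A \models \varphi$'' would be recursive, making representability of finite $\TCA_n$'s decidable, contradicting the preceding theorem. I expect item (3) to be the main obstacle: producing a finite representable refuter of $\epsilon_\A$ requires care, but the discrete-topology reduction to the corresponding $\CA_n$-construction in \cite{HHbook} should suffice, since the interior operators impose no additional constraint once they are interpreted as the identity.
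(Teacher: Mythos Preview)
Your proposal is correct and follows exactly the route the paper indicates: the paper gives no detailed proof, merely stating that the corollary follows from the preceding undecidability theorem together with the analogous relation-algebra arguments in \cite[Corollary~18.16, Theorem~18.27]{HHbook}, and your sketch fills in precisely those standard details adapted to the $\TCA_n$ signature via the discrete-topology reduction.

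One minor simplification for item~(3): the detour through splitting an atom is unnecessary. Once you have the diagram equation $\epsilon_{\A}$ with the property that a representable algebra $\B$ refutes $\epsilon_{\A}$ iff $\A$ embeds in $\B$, then for a finite representable $\A$ the algebra $\A$ itself is a finite member of $\sf RTCA_n$ refuting $\epsilon_{\A}$; no auxiliary $\B$ is needed. So $\sf RTCA_n\cap\sf Fin\models\epsilon_{\A}$ iff $\A$ is not representable, and (3) follows immediately from the same construction as (2).
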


\end{document}